\documentclass[11pt]{article}

\usepackage{amsmath}
\usepackage{epsfig, amssymb, amsfonts}
\usepackage{amsthm}
\usepackage{amstext}
\usepackage{amsopn}
\usepackage{mathrsfs}
\usepackage{subfigure}
\usepackage{graphicx}
\usepackage[left=2.3cm,top=2cm,right=2.3cm,bottom=2cm, nohead,foot=1cm]{geometry}
\usepackage[makeroom]{cancel}
\usepackage{color}
\usepackage{enumerate}
\usepackage{comment}

\numberwithin{equation}{section}

\newtheorem{theorem}{Theorem}[section]

\newtheorem{lemma}[theorem]{Lemma}

\newtheorem{corollary}[theorem]{Corollary}

\theoremstyle{definition}
\newtheorem{definition}[theorem]{Definition}
\newtheorem{remark}[theorem]{Remark}

\newcommand{\R}{{\mathbb R}}

\newcommand{\Z}{{\mathbb Z}}
\newcommand{\N}{{\mathbb N}}

\newcommand{\sbf}{{s}}
\newcommand{\bbf}{{b}}

\newcommand{\ds}{{\displaystyle}}

\newcommand{\TS}{{\mkern-1mu\times\mkern-1mu}}
\newcommand{\DD}{{\mkern-1mu\diamond\mkern-1mu}}
\newcommand{\BA}{{\widehat{\beta}}}

\newcommand{\BR}{{\mathbf{R}}}
\newcommand{\I}{{\textup{i}}}
\newcommand{\Try}{{\triangleleft}}

\date{\vspace{-9ex}}

\title{The intermediate disorder regime for a directed polymer model on a hierarchical lattice\vspace{.3cm}}

 \author{\textbf{Tom Alberts}\footnote{Department of Mathematics,
 University of Utah: {\tt   alberts@math.utah.edu}}  \hspace{.3cm} \& \hspace{.3cm} \textbf{Jeremy Clark}\footnote{Department of Mathematics,  University of Mississippi: {\tt
jeremy@olemiss.edu}}  \hspace{.3cm} \& \hspace{.3cm} \textbf{Sa\v{s}a Koci\'c}\footnote{Department of Mathematics,  University of Mississippi: {\tt skocic@olemiss.edu }}\vspace{.5cm} }

\begin{document}
\maketitle

\begin{abstract}
We study a directed polymer model defined on a hierarchical diamond lattice, where the lattice is constructed recursively through a recipe depending on a branching number $b\in \mathbb{N}$ and a segment number $s\in \mathbb{N}$.  When $b\leq s$ previous work \cite{lacoin} has established that the model exhibits strong disorder for all positive values of the inverse temperature $\beta$, and thus weak disorder reigns only for $\beta=0$ (infinite temperature).  Our focus is on the so-called \textit{intermediate disorder regime} in which the inverse temperature $\beta\equiv \beta_{n}$ vanishes at an appropriate rate as the size $n$ of the system grows. Our analysis requires separate treatment for the cases $b<s$ and $b=s$. In the case $b<s$ we prove that when the inverse temperature is taken to be of the form $\beta_{n}=\BA (b/s)^{n/2}$ for $\BA>0$, the normalized partition function of the system converges weakly as $n \to \infty$ to a distribution  $\mathbf{L}(\BA)$ depending continuously on the parameter $\BA$.  In the case $b=s$ we find a critical point in the behavior of the model when the inverse temperature is scaled as $\beta_{n}=\widehat{\beta}/n$; for an explicitly computable critical value $\kappa_{b} > 0$ the variance of the normalized partition function converges to zero with large $n$ when $\widehat{\beta}\leq \kappa_{b}$ and grows without bound when $\widehat{\beta}>\kappa_{b}$.   Finally, we prove a central limit theorem for the normalized partition function when $\BA\leq \kappa_{b}$.

\end{abstract}

\section{Introduction}

Probabilistic models of directed polymers are often constructed as modifications of directed random walk models on a particular lattice. For the particular model that we study, directed polymers in a random environment, the path probabilities are randomly perturbed by a Gibbsian reweighting that is determined by each realization of the environment, and the strength of the perturbation is controlled by an inverse temperature parameter $\beta$. Broadly speaking, the main question of interest in such models is how the presence of the random environment affects these path probabilities, and to what degree any effect can be quantified. The values of $\beta$ for which the presence of the environment has no substantial effect are called the \textit{weak disorder regime}, while those $\beta$ for which the environment has a meaningful influence make up the \textit{strong disorder regime}. The precise characterization of weak versus strong disorder is given in terms of the positivity of a martingale limit, the martingale being the normalized partition function associated with the model.

In recent years there has been substantial attention paid to determining the exact structure of the weak and strong disorder regimes. On the integer lattice $\Z^d$ (which is commonly referred to as $d+1$-dimensional to indicate that the polymer is directed) it is known \cite{Comets2} that there is a critical value $\beta_c \geq 0$ such that $\beta < \beta_c$ is in the weak disorder regime and $\beta > \beta_c$ is in the strong disorder regime. When $d = 1$ or $2$ it is known that the polymer is in the strong disorder regime for all finite temperatures (i.e. $\beta_c = 0$), whereas for $d \geq 3$ it is known that there is a non-trivial interval of finite temperatures that make up the weak disorder regime (i.e. $\beta_c > 0$). In the latter case there are only bounds for the value of $\beta_c$ and how it depends on the dimension $d$ and the statistical distribution of the underlying environment.

In contrast, for directed polymers on a self-similar tree there is an \textit{exact} characterization \cite{Biggins, Kahane} of the value of $\beta_c$ (for any choice of the statistical distribution of the environment satisfying weak moment assumptions), and much more is known about the strong and weak disorder regimes \cite{Buffet, Morters}. The tree model is easier to analyze because of the underlying geometry of the space, namely that the tree is self-similar and that paths, once they split, can never recombine. In particular, the non-recombining feature means that the random energies assigned to different paths are essentially independent after a long time, which is very different from what happens in the lattice case.

Recently a new phenomenon has emerged in directed polymer models: the \textit{intermediate disorder regime}. The terminology \textit{intermediate disorder} was first introduced in \cite{alberts} to indicate that it sits between weak and strong disorder, and the regime itself is accessed by scaling the inverse temperature $\beta$ towards its critical value as the size of the system goes to infinity. The correct scaling produces new and interesting behavior that is fundamentally different from what happens for strong and weak disorder. At the same time, the intermediate regime often contains a full range of models that act (or are conjectured to act) as a bridge between the two classical regimes.

Typically intermediate disorder is easier to study in situations where $\beta_c = 0$, since in this case the polymer model is only a small perturbation of the underlying random walk. In $d=1$ it was shown in \cite{alberts} that the proper scaling for intermediate disorder is to replace $\beta$ with $\beta n^{-1/4}$, and that under a diffusive scaling of space and time the entire random Gibbs measure converges (in law) to a random measure on continuous paths called the continuum directed random polymer. Properties of the continuum polymer were analyzed in \cite{alberts2}, but it is important to note that the continuum polymer is actually an entire \textit{family} of models indexed by the parameter $\beta$. It is not yet understood how or even if this family acts as a bridge between weak and strong disorder, although encouraging results in this direction were obtained in \cite{Corwin}. For $d = 2$ the situation for intermediate disorder is largely open, and for $d \geq 3$ it seems that little has been done for scalings around $\beta_c$. A model somewhat similar to the $d=1$ case, the \textit{continuum disordered pinning model} (CDPM), was considered in \cite{Carav2}. Again in that case $\beta_c = 0$, and the authors were able to determine the precise scaling for intermediate disorder and take a scaling limit of the random Gibbs measure. In the tree case, where $\beta_c > 0$, the proper scaling window for the intermediate disorder regime was determined in \cite{Alberts_Ortgiese}, although nothing could be said about infinite volume limits of the corresponding Gibbs measures.

In this paper we study intermediate disorder on the \textit{diamond hierarchical lattice}. This particular lattice has an interesting geometrical structure that
still possesses a self-similarity like the tree does, but at the same time allows paths to reintersect like they do on $\Z^d$. In this sense the diamond lattice is a natural generalization of the tree but in many important ways is actually more like $\Z^d$, and for the polymer model with site disorder (which is what we consider) this is especially true. See \cite[Remark 1.1]{lacoin3} for a convincing argument in favor of this viewpoint. The structure of the diamond lattice is controlled by two parameters $b, s \in \N$: the \textit{branching number} and the \textit{segment number}. The ``lattice'' is actually a sequence of graphs constructed by a recursive procedure determined by the branching and segment numbers, and the depth of the recursive determines the length of the polymer. For directed polymers on the diamond lattice it was proved in \cite{lacoin} that there is a $\beta_c$ that strictly separates weak disorder from strong disorder, as in the $d+1$-dimensional case, and that $\beta_c = 0$ for $b \leq s$ while $\beta_c > 0$ for $b > s$. These results are precisely summarized in Section \ref{SecMain}.

Our main focus  is on the intermediate disorder regime in the $b<s$ and $b = s$ cases, i.e., when $\beta_c = 0$, which exhibit widely disparate behaviors.   Our analysis, in a sense, extends the results of \cite{lacoin} to obtain a richer description of the phase diagram around the critical point.   In the case $b<s$,  one sees through a simple Taylor expansion of the partition function that the correct intermediate disorder scaling is to replace $\beta$ with $\BA( b/s)^{n/2}$ for a parameter $\BA>0$; see Section~\ref{SecHeuristic}.   Similarly to the  $1+1$-dimensional polymer and the continuum disordered pinning model, we find that there is a one-parameter  family of limit laws $\mathbf{L}(\BA)$ for the normalized partition functions of the model. The limiting family depends continuously on $\BA$ and for $b < s$ satisfies
\begin{itemize}
\item  For $\BA\ll 1$, $\mathbf{L}(\BA)$ is approximately a normal distribution  with mean $1$ and variance  $\BA^2\frac{b(s-1)}{s-b}$.

\item If $X^{(i,j)}(\BA) $ are independent random variables with distribution $\mathbf{L}(\BA)$, then
$$ \mathbf{L}\Big(\BA \sqrt{s/b}\Big)\, \stackrel{d}{=}\,\frac{1}{b}\sum_{i=1}^{b}\prod_{1\leq j\leq s}X^{(i,j)}(\BA)  \, . $$
  \end{itemize}
As in \cite{alberts,Carav1,Carav2} the limiting law appears as a \textit{universal} object, i.e. the same limit appears for any disorder variables with exponential moments that obey proper normalization constraints, but the limit itself is not Gaussian. Furthermore these limiting laws can be used to construct a continuum diffusion process interacting with a random environment on the continuum diamond lattice, similar to what is done in \cite{alberts2, Carav2}. This is discussed in Section \ref{Secb<s}. We emphasize that in contrast to \cite{alberts,Carav1,Carav2} our methods for showing the existence of these limit laws and their universality properties do not use polynomial chaos expansions; instead they come from renormalization group type ideas.

In the case $b=s$, nontrivial behavior emerges in the intermediate disorder regime by replacing $\beta$ with $\BA/n$, but in contrast to the $1+1$-dimensional polymer and the CDPM the parameter $\BA$ does not seem to act as any sort of bridge. In fact, in the $b = s$ case, there is a critical value $\kappa_b$ such that the variance of the rescaled partition function converges to zero with large $n$ for $\BA \leq \kappa_b$, but also grows to infinity  for $\BA > \kappa_b$. This is discussed in Section \ref{SecWeakDisorder}.


The structure of this paper is as follows: in Section \ref{SecMain} we give a full description of the model and precisely state our main results. In Section \ref{SecDef} we describe our coordinate system for the diamond lattice and use it to derive recursive formulas for moments of the partition function.  We especially point out Section \ref{SecHeuristic}, which gives some heuristic reasoning for the choice of scalings $\BA (b/s)^{n/2}$ and $\BA/n$ in the $b < s$ and $b = s$ cases, respectively.  Section~\ref{Secb<s} contains a proof of a limit theorem for the normalized partition function in the $b<s$ case, and   Section \ref{SecWeakDisorder} contains our results on the $b=s$ case.  In Sections~\ref{SecNormalWeakDisorder} and~\ref{SecEdgeModel} we give, respectively,  very brief discussions of the $b>s$ case and the equivalent model in the $b=s$ case in  which random weights are placed on the bonds of the graph rather than the sites.      Finally,  we relegate proofs that are particularly calculus-based to Section~\ref{SecMiscProofs}.   

\medskip
\noindent
\textbf{Acknowledgements:} Tom  and   Sa\v{s}a thank Kostya Khanin for useful discussions.

\section{The Model and Main Results}\label{SecMain}

Diamond graphs are constructed inductively by replacing each edge in $D_{n}$ by a `diamond' with $\bbf \in \mathbb{N}$ branches, each of which is split into $\sbf \in \mathbb{N}$ segments:
\begin{center}
\hspace{1cm}\includegraphics[scale=.5]{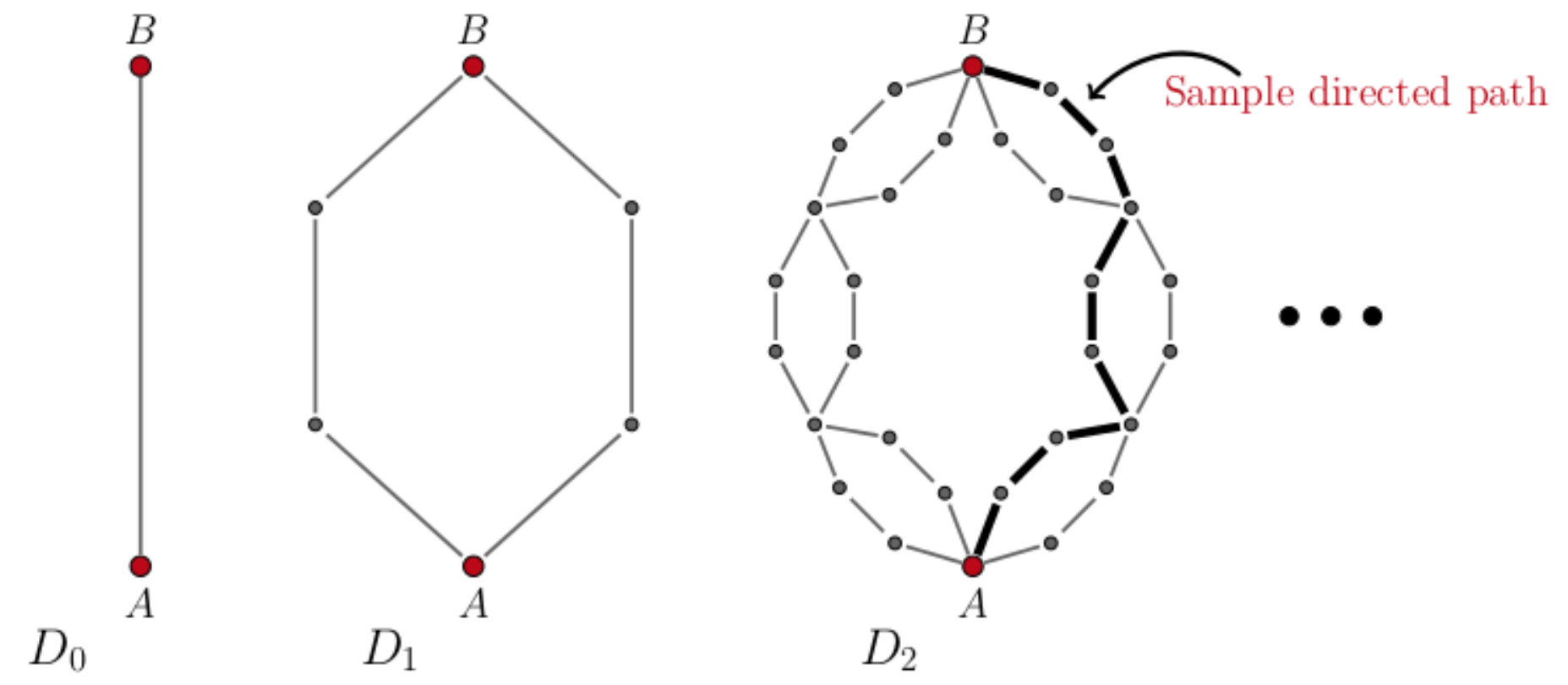}\\
\small The first few diamond graphs constructed with $\bbf=2$ and $\sbf=3$.
\end{center}
The $n$th diamond graph defines a set of directed paths between the root vertices $A$ and $B$, which we denote by $\Gamma_{n}$.  The inductive nature of the construction implies that $ D_{m+n}$ is equivalent to replacing each edge on $D_{m}$ by a copy of $D_{n}$. We assume throughout that $b > 1$, since otherwise the graph structure is trivial.

We now place  i.i.d.\ random variables $\omega_{a}$ at the set of vertices $a\in D_{n}$ and consider statistical mechanical quantities defined through sums over the set of directed paths, $\Gamma_{n}$.  Since  $D_{n}$ is canonically embedded in $D_{n+1}$, we can  view the random variables $\omega_{a}$, $a\in \cup_{n=1}^{\infty}D_{n} $ as residing in a single probability space.   We  assume that the random variables   have mean zero, variance one, and  finite exponential moments
\begin{align*}
\lambda(\beta) := \log \mathbb{E}\big[ e^{\beta \omega_{a}}   \big]\,<\, \infty
\end{align*}
for sufficiently small $\beta>0$.    The  partition function $Z_{n}(\beta)$ is the random positive quantity  defined by
$$Z_{n}(\beta) \, := \,\frac{1}{|\Gamma_{n}|  }\sum_{p\in \Gamma_{n}  }e^{\beta  H_{n}^{\omega}(p)    } \, , \hspace{1cm}\text{where}\hspace{1cm}  H_{n}^{\omega}(p) \, :=\,  \sum_{a\in p}  \omega_{a} \,  \quad \text{and} \quad |\Gamma_{n}| = \bbf^{\frac{\sbf^{n}-1}{\sbf-1} }\,.    $$
The partition function normalizes random probability measures $\mu^{(\omega)}_{\beta, n}$ on the path space $p\in \Gamma_{n}$:
$$\mu^{(\omega)}_{\beta, n}\big(p) \,   = \, \frac{  e^{\beta  H_{n}^{\omega}(p) }    }{  Z_{n}(\beta)   } \,  .    $$

Disorder for directed polymer models is commonly interpreted through a closely related quantity, $W_{n}(\beta)$, defined by
\begin{align}\label{ExpW}
W_{n}(\beta)\, := \, \frac{  Z_{n}(\beta)  }{  \mathbb{E}\big[  Z_{n}(\beta)   \big] }\,, \quad \text{which can be written as }\quad \frac{1}{|\Gamma_{n}|  }\sum_{p\in \Gamma_{n}  } \prod_{a\in p} E_{a}(\beta   )
\end{align}
for $\ds E_{a}(\beta   ):= e^{\beta  \omega_{a} - \lambda(\beta) }$.  The inductive nature of the construction of the diamond graphs $D_{n}$ suggests that there should be a recursive relation between the distributions of $W_{n}(\beta)$ and $W_{n+1}(\beta)$;  if $W_{n}^{(i,j)}(\beta)$  are independent copies of  $W_{n}(\beta)$ and $E^{(i,j)}(\beta   )$ are independent copies of  $E_{a}(\beta   ) $, then there is the following equality in distribution
\begin{align}\label{Induct}
W_{n+1}(\beta)  \, \stackrel{ d }{ =}  \,  \frac{1}{b}\sum_{i=1}^{b} \prod_{1\leq j\leq s}  W_{n}^{(i,j)}(\beta) \prod_{1\leq j\leq s-1}  E^{(i,j)}(\beta   )\, .
\end{align}
Moreover, the sequence of random variables $W_{n}$ forms a mean one martingale with respect to the filtration $\mathcal{F}_{n}:=\sigma\big\{  \omega_{a} \text{ for } a\in  D_{m}, m \leq n    \big\}$.  The martingale convergence theorem guarantees the almost sure existence of a nonnegative limit as $n\rightarrow \infty$:
$$  W_{\infty}(\beta) \, = \,\lim_{n\rightarrow \infty}W_{n}(\beta)\, .  $$
The literature~\cite{Bolth,Sepp,lacoin} on random polymer models commonly interprets those $\beta$ for which  $W_{\infty}(\beta)>0$ as being in the \textit{weak disorder} regime and those $\beta$ for which  $W_{\infty}(\beta)=0$  as being in the \textit{strong disorder} regime.

In Lemma 2.4 and Proposition 2.7 of~\cite{lacoin},  Lacoin and Moreno establish the following results for the diamond hierarchical lattice (amongst others):
\begin{enumerate}[i)]
\item   For any fixed $b$, $s$, and $\beta$, the random variable $W_{\infty}(\beta)$ satisfies the zero-one law
$$\mathbb{P}\big[  W_{\infty}(\beta)>0 \big] \, =\, 0 \text{ or } 1\,.   $$

\item When $b\leq s$, then for all $\beta>0$
$$\mathbb{P}\big[  W_{\infty}(\beta)=0 \big]=1\,. $$

\item When $s < b$ there is a $\beta_c > 0$ such that $\mathbb{P}[W_{\infty}(\beta) > 0] = 1$ for $\beta \leq \beta_c$ and $\mathbb{P}[W_{\infty}(\beta) = 0] = 1$ for $\beta > \beta_c$.
\end{enumerate}
Thus by (ii), when $b\leq s$  the question of weak disorder versus strong disorder is merely that of infinite temperature ($\beta=0$) versus finite temperature ($\beta>0$), whereas by (iii), when $s < b$ there is a region of finite temperatures $(\beta \leq \beta_c)$ in which weak disorder still holds.

Our goal in this article is to magnify the high temperature regime of the model, $\beta\ll 1$, in the cases $b<s$ and $b=s$
 through an examination of the limiting behavior of the random variables $W_{n}( \beta_{n})$ for  $n\gg 1$ and some choice of $\beta_{n}\ll 1$.  In other words, the inverse temperature $\beta_{n}$ is scaled towards the critical value $\beta_c = 0$ as the size $n$ of the system goes to infinity. The following theorem states our main result in the $b<s$ case.

\begin{theorem}[$b<s$]\label{ThmMainIII} Pick $\widehat{\beta}>0 $ and define $\beta_{n}:= \widehat{\beta}(b/s)^{\frac{n}{2}}  $.  As $n\rightarrow \infty$, there is convergence in distribution
\begin{align*}
W_{n}(\beta_{n}) \quad \stackrel{\mathcal{L}}{\Longrightarrow} \quad    L_{\widehat{\beta}^2\frac{s-1}{s-b}}^{b,s}\,
\end{align*}
for a family of  probability distributions $(L_{r}^{b,s})_{r > 0}$ supported on $\R^+$ and satisfying the following:

\begin{enumerate}[I)]
\item  $L_{r}^{b,s}$ has mean $1$ and variance $\frak{v}_{b,s}(r)$ for a function $\frak{v}_{b,s}:\R^{+}\rightarrow \R^+$ satisfying
$$    \frak{v}_{b,s}\big(\frac{s}{b}r\big)\,  =\,\frac{1}{b}\big[ (1+r)^{s}\,-\,1    \big]            \hspace{1cm}\text{and}\hspace{1cm} \lim_{r\searrow 0}  \frac{ \frak{v}_{b,s}(r)}{r}\,=\,1\,.   $$

\item  If $X^{(i,j)}_{r}$ are independent random variables with  distribution $L_{r}^{b,s}$, then
$$   L_{\frac{s}{b}r}^{b,s} \quad \stackrel{d}{=}\quad   \frac{1}{b}\sum_{i=1 }^{b} \prod_{1\leq  j\leq s} X^{(i,j)}_{r}\,.    $$

\item  If $X_{r}$ has distribution $L_{r}^{b,s}$, then as $r\searrow  0$ there is convergence in law
$$   \frac{X_{r}-1}{\sqrt{r}}  \hspace{.5cm}  \stackrel{\mathcal{L}}{\Longrightarrow} \hspace{.5cm}     \,\mathcal{N}(0,1)\,   .  $$

\end{enumerate}

\end{theorem}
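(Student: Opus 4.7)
The plan is to combine an explicit second-moment analysis with the one-step diamond recursion (1.2) to extract subsequential weak limits, then to pin them down uniquely by combining the recursion (II) with a direct Gaussian limit at small $\BA$ (namely (III)). The resulting unique limit is declared to be $L^{b,s}_{\BA^2(s-1)/(s-b)}$, and (I) is computed along the way. This is a renormalization-group-type argument in the sense that the recursion (1.2) plays the role of the RG map on distributions and the limit is characterized by its behavior along the RG orbit.

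First, the variance $v_n(\beta) := \E[W_n(\beta)^2] - 1$ satisfies, by (1.2) and the independence of branches,
$$v_{n+1}(\beta) \;=\; \frac{1}{b}\Big[\big(1+v_n(\beta)\big)^s\big(1+\tau(\beta)\big)^{s-1} - 1\Big],\qquad \tau(\beta) := e^{\lambda(2\beta)-2\lambda(\beta)} - 1 = \beta^2 + O(\beta^4),$$
with $v_0 = 0$. Iterated $n$ times at $\beta = \beta_n$, the geometric decay $\tau(\beta_n) = O((b/s)^n)$ balanced against the expanding linear map $v\mapsto (s/b)v$ yields a finite limit: expanding in powers of $\tau(\beta_n)$, the coefficients satisfy explicitly solvable linear recursions, and summing them gives $v_n(\beta_n) \to \mathfrak{v}_{b,s}(r)$ for $r = \BA^2(s-1)/(s-b)$, establishing (I). The uniform $L^2$ bound, combined with $\E[W_n(\beta_n)] = 1$, gives tightness, so subsequential weak limits $L_\BA$ exist.

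Passing to the limit in
$$W_{n+1}(\beta_{n+1}) \;=\; \frac{1}{b}\sum_{i=1}^b\prod_{j=1}^s W_n^{(i,j)}(\beta_{n+1})\prod_{j=1}^{s-1}E^{(i,j)}(\beta_{n+1})$$
with $\beta_{n+1} = \BA'(b/s)^{n/2}$ and $\BA' := \BA\sqrt{b/s}$, each inner factor $W_n(\beta_{n+1})$ converges in law (through the same subsequence, applied with $\BA'$ in place of $\BA$) to an independent copy of $L_{\BA'}$, while each $E(\beta_{n+1}) \to 1$ in probability since $\beta_{n+1}\to 0$ and $\mathrm{Var}(E(\beta)) = O(\beta^2)$. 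This produces the functional identity $L_\BA \stackrel{d}{=}\frac{1}{b}\sum_i\prod_j L^{(i,j)}_{\BA'}$, which is precisely (II) under the identification $r = \BA^2(s-1)/(s-b)$. For (III), in the regime of small $\BA$, I would expand $W_n(\beta_n) - 1 = \beta_n\sum_{a\in D_n}c_{a,n}\omega_a + R_n$, where $c_{a,n}$ are weights counting fractions of directed paths through $a$ and $R_n$ collects higher-degree terms in the $\omega_a$; applying a Lindeberg CLT to the linear part, with the negligibility condition $\max_a c_{a,n}^2/\sum_a c_{a,n}^2 \to 0$ verified from the hierarchical structure of $D_n$, and showing $R_n$ is $o_{\mathbb{P}}$ of the linear part via a second-moment bound, yields the Gaussian limit $(X_r - 1)/\sqrt{r}\Rightarrow \mathcal{N}(0,1)$ in (III).

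Finally, (II) and (III) together pin down the family: iterating (II) backward identifies $L_\BA$ with $\frac{1}{b}\sum\prod L_{\BA(b/s)^{k/2}}$ for any $k$, and as $k\to\infty$ the latter is asymptotically Gaussian by (III) with variance computed in (I). Hence every subsequential weak limit is the same distribution, and $W_n(\beta_n)$ converges to this unique $L^{b,s}_r$. The main obstacle is step (III) — specifically the control of the nonlinear remainder $R_n$ uniformly in $n$. The leading-term CLT is accessible, but the nonlinear contributions correspond to products of disorder along subpaths, and bounding them uniformly requires moment estimates on multi-index path-concentration coefficients generalizing $c_{a,n}$. These estimates rely on the self-similarity of the diamond lattice to ensure the nonlinear part is dominated by the linear part when $\BA$ is small.
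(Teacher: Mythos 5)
Your outline reproduces the paper's heuristics (the variance recursion of Corollary \ref{Sig}, the RG identity (II) from \eqref{Induct}, Gaussianity of the linear chaos for small $r$), and tightness plus subsequential limits satisfying (II) is fine after a diagonal extraction over the parameters $\BA (b/s)^{j/2}$. The genuine gap is the uniqueness step, which is where the entire technical content of the paper lives. You argue: every subsequential limit satisfies $L_{\BA}=T^{k}(L_{\BA(b/s)^{k/2}})$, where $T$ is the map in (II), and $L_{\BA(b/s)^{k/2}}$ is asymptotically Gaussian by (III), ``hence every subsequential weak limit is the same distribution.'' This does not follow from weak convergence alone. The map $T$ is expanding at the level of fluctuations (variance is multiplied by $s/b>1$ per step, and $T^{k}$ involves polynomials in roughly $(bs)^{k}$ independent inputs), so knowing that two candidate input laws at depth $k$ are both ``close to'' $1+\mathcal{N}(0,r(b/s)^{k})$ in distribution does not, without a quantitative estimate, imply that their images under $T^{k}$ are close uniformly in $k$; the error could be amplified at exactly the rate at which the inputs agree. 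Moreover, even for a single sequence you would need to know that $T^{k}$ applied to the exact Gaussian input converges as $k\to\infty$, which is itself a nontrivial statement (it is the paper's Lemma \ref{LemExist}).

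What is missing, concretely, is an $L^{2}$ (or coupling) stability estimate for the iterated map: the paper's Lemma \ref{LemMMap} shows that the deviation of the iterated variance map from its linearization is second order, $\widehat{M}^{N-n}(x(b/s)^{N})-x(b/s)^{n}=O((b/s)^{2n})$, and Lemma \ref{LemBFW} converts this into the statement that replacing the true inputs at depth $N$ by their linearized (CLT) approximations changes $\mathbf{W}_{n}$ by a quantity whose variance is $O((b/s)^{2n})$ before the expansion, hence $o(1)$ after it. This is exactly the estimate that lets the paper first construct the limit from a consistently coupled Gaussian field (Lemma \ref{LemExist}) and then prove universality for arbitrary inputs with matching first and second moments and a Lindeberg condition (Theorem \ref{ThmEdge}), after which the vertex model is reduced to the edge model by cutting the hierarchy at depth $n/2$ (Lemma \ref{Initial}, Theorem \ref{LemVarIII}). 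Your proposal identifies the remainder control in (III) as the main obstacle, but that part is comparatively routine (orthogonality of the chaos expansion bounds the nonlinear variance by $O(r^{2})$); the step that actually fails as written is the inference from ``(II) and (III) hold for every subsequential limit'' to uniqueness, which requires the uniform-in-$k$ propagation estimates above and is not supplied.
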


\begin{remark}
Given $x >0$ the family of  probability distributions $(L_{r}^{b,s})_{r > 0}$ can be used to define a consistent family of random measures $\mu_{x}^{(n)}$ on paths $\Gamma_{n}$ for  $n\in \mathbb{N}$.   To do this   we attach i.i.d.\ random variables $X_{n}(a)$ having distribution $  L_{x(b/s)^{n}  }^{b,s}$ to edges $a\in E_{n}$ of the diamond graph $D_{n}$.   The random probability assigned to a given path $p\in \Gamma_{n}$ is then
\begin{align*}
\mu_{x}^{(n)}(p) \, = \, \frac{\prod_{a\Try p} X_{n}(a)}{\sum_{p\in \Gamma_{n}} \prod_{a\Try  p} X_{n}(a)    }\, ,
\end{align*}
where the products are over the set of edges $a$ lying along the path $p$.  We show that this definition is self-consistent in the proof of Corollary~\ref{CorCon}.

\end{remark}

\begin{remark}
In Lemma~\ref{LemTransition} we show that $L_{r}^{b,s}$ converges weakly to a $\delta$-distribution at $0$ as $r\rightarrow \infty$.  This is indicative of a continuous transition to strong disorder as $r\rightarrow \infty$. It would be interesting to determine if there are exponential fluctuations for these laws as $r \rightarrow \infty$, that is if by taking a logarithm and then centering and scaling in an appropriate way a non-trivial limiting law appears. This is the situation that was encountered for the limiting law of the partition function in \cite{alberts}, built upon earlier work in \cite{ACQ}. In that case the limiting law was the ubiquitous Tracy-Widom GUE distribution. 
\end{remark}

\begin{remark}
We emphasize that our methods for showing the existence of the $L_r^{b,s}$ laws and establishing convergence towards them is not based on polynomial chaos expansions, as in \cite{alberts,Carav1,Carav2}. In the polynomial chaos method the partition function is expanded into a sum of polynomials in the disorder variables and collected according to the degree. Convergence follows by Central Limit Theorem type arguments and limit laws for the transition kernels of the underlying random walk, which act as coefficients of the polynomials. The proofs in this paper are entirely different and instead use the recursive structure of the lattice to eliminate the use of the chaos expansion.
\end{remark}

Next we state our main result in the $b=s$ case, for which we scale the inverse temperature  as $\beta=\widehat{\beta}/n$.  We feel that this is an interesting regime, because there is a critical point in the behavior of the normalized partition function depending on whether the parameter $\widehat{\beta}$ is less than or greater than a cut-off value
\begin{align}\label{KappaB}
\kappa_{b}:=\frac{\pi \sqrt{b}}{\sqrt{2}(b-1)}.
\end{align}

\begin{theorem}[$b=s$]\label{ThmMain}
As $n\rightarrow\infty $, we have the following behavior for the variance of $W_{n}\big( \widehat{\beta}/n \big)$:
\begin{align}\label{WConvInProb}
\text{}\hspace{1cm}\textup{Var}\bigg(W_{n}\Big(\frac{\widehat{\beta}}{n}\Big)\bigg)  \quad \longrightarrow \quad \begin{cases} 0, & \quad   0\leq \widehat{\beta}  \leq   \kappa_{b} \, ,  \vspace{.2cm}    \\  \infty,  & \quad   \,\,\,\,\,\,\widehat{\beta} > \kappa_{b} \, . \end{cases}
\end{align}
When  $\widehat{\beta}  <  \kappa_{b}$ the fluctuations of $W_{n}\big( \widehat{\beta}/n\big) $ around $1$ obey the limit theorem:
\begin{align}\label{WConv}
  n^{\frac{1}{2}}\bigg(W_{n}\Big(\frac{\widehat{\beta}}{n}\Big) -1\bigg)    \quad  \stackrel{\mathcal{L}}{\Longrightarrow} \quad  \mathcal{N}\Big( 0,\, \upsilon_{b}(\widehat{\beta}) \Big) \,,
\end{align}
where the variance  is  $\ds \upsilon_{b}(\widehat{\beta}):=\widehat{\beta} \frac{\sqrt{2}}{\sqrt{b}}\tan\big( \frac{b-1}{\sqrt{2b}}\widehat{\beta}  \big)$.  At the critical value $\widehat{\beta}=\kappa_{b}$ there is a different scaling:
\begin{align}\label{WConvII}
  \sqrt{\log n}\bigg(W_{n}\Big(\frac{\kappa_{b} }{n}\Big) -1\bigg)    \quad  \stackrel{\mathcal{L}}{\Longrightarrow} \quad  \mathcal{N}\Big( 0,\, \frac{6}{b+1}\Big) \,.
\end{align}

\end{theorem}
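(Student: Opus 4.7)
The plan starts with the exact recursion for $v_k(\beta) := \mathrm{Var}(W_k(\beta))$ obtained from (2.6) and the independence of the inner factors:
\begin{equation*}
v_{k+1}(\beta) \;=\; \frac{(1+v_k(\beta))^b(1+u(\beta))^{b-1} - 1}{b}, \qquad v_0 = 0,
\end{equation*}
with $u(\beta) := e^{\lambda(2\beta) - 2\lambda(\beta)} - 1 = \beta^2 + O(\beta^3)$. Setting $\beta = \widehat\beta/n$ and Taylor-expanding in small $v_k$ and small $u$, the leading discrete dynamics read
\begin{equation*}
v_{k+1} - v_k \;=\; \tfrac{b-1}{2}\,v_k^2 + \tfrac{(b-1)\widehat\beta^2}{b\,n^2} + O\bigl(v_k^3 + v_k/n^2\bigr).
\end{equation*}
The rescaling $v_k = y_k/n$ identifies this as the explicit Euler discretization (step $1/n$) of the Riccati IVP $y'(t) = \tfrac{b-1}{2}\,y(t)^2 + \tfrac{(b-1)\widehat\beta^2}{b}$, $y(0)=0$, whose solution is $y(t) = \widehat\beta\sqrt{2/b}\,\tan\bigl(\tfrac{(b-1)\widehat\beta t}{\sqrt{2b}}\bigr)$.

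For $\widehat\beta < \kappa_b$ the argument of the tangent stays below $\pi/2$ on $[0,1]$, so $y$ is bounded with $y(1) = \upsilon_b(\widehat\beta)$. A Gronwall comparison on bounded intervals, exploiting the local Lipschitz control of the quadratic vector field, gives $n\,v_n(\widehat\beta/n) \to \upsilon_b(\widehat\beta)$, establishing both the variance vanishing in (2.8) and the limiting variance appearing in (2.7). For $\widehat\beta > \kappa_b$ the ODE solution diverges at some $t^* < 1$; the discrete iterate tracks the ODE well enough to exceed any chosen threshold by step $\lfloor t^* n\rfloor$, and then the monotone lower bound $v_{k+1} \ge v_k + \tfrac{b-1}{2}v_k^2$ forces $v_n \to \infty$ over the remaining steps.

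The critical case $\widehat\beta = \kappa_b$ is the most delicate, since $y$ diverges exactly at $t = 1$ and the Euler approximation fails inside a boundary layer near $k = n$. I would proceed by matched asymptotics: in the outer region, use $y(t) \sim \tfrac{2}{(b-1)(1-t)}$ to control $v_k$ up to $k = n - \ell_n$ for a slowly-growing cutoff $\ell_n$; inside the boundary layer, analyze the nonlinear recursion directly via the change of variable $\rho_k = 1/v_k$, which satisfies $\rho_{k+1} = \rho_k - \tfrac{b-1}{2} + O(1/\rho_k)$ plus a small $\widehat\beta^2/n^2$ correction. Summing this telescopically across the layer and matching to the outer expansion should yield the sharp critical asymptotic $v_n \sim 6/((b+1)\log n)$, with the prefactor $6/(b+1)$ emerging from the matching constants.

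For the central limit theorems, set $U_k = \sqrt{n}(W_k - 1)$ in the subcritical regime and $U_k = \sqrt{\log n}(W_k - 1)$ at criticality, and track the characteristic function $\phi_k(t) = \mathbb{E}[e^{itU_k}]$. From (2.6), $\phi_{k+1}(t) = \bigl(\mathbb{E}[\exp(i(t/b)\sqrt{n}(Y_1^{(k)} - 1))]\bigr)^b$ where $Y_1^{(k)} = \prod_j W_k^{(1,j)}\prod_j E^{(1,j)}$. Expanding $Y_1^{(k)} - 1$ in the small quantities $W_k - 1$ (of order $1/\sqrt{n}$ by the variance bound) and $E - 1$ (of order $1/n$), the linear terms reproduce a factor $[\phi_k(t/b)]^{b^2}$ from the self-similar product structure, while the bilinear cross terms contribute exactly the variance drift $\tfrac{b-1}{2}v_k^2$ that drives the Riccati ODE. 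One then inductively controls $|\phi_k(t) - \exp(-\sigma_k^2 t^2/2)|$ on compact $t$-intervals, where $\sigma_k^2 = n\,v_k$ or $\log n \cdot v_k$, using the variance convergence above to ensure the error vanishes. The main obstacle is the critical case: the slow logarithmic scaling leaves very little room for error accumulation over $n$ steps, so sharp control on the higher-moment contributions of $Y_1^{(k)} - 1$ is needed to confirm they do not perturb the limit or its variance $6/(b+1)$.
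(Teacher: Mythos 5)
Your variance analysis for the subcritical and supercritical regimes is essentially the paper's approach in different clothing: you compare the exact variance recursion with the Euler discretization of the Riccati ODE whose solution is a tangent function, use a bounded-interval comparison to get $n\,\mathrm{Var}(W_n(\widehat\beta/n)) \to \upsilon_b(\widehat\beta)$ when $\widehat\beta < \kappa_b$, and use a monotone blowup argument when $\widehat\beta > \kappa_b$. The paper carries this out with explicit telescoping-sum estimates and derivative bounds on iterates of the map, but the underlying mechanism is the same.

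Your sketch of the critical case, however, would not yield the prefactor $6/(b+1)$ as written. Passing to $\rho_k = 1/v_k$ and obtaining $\rho_{k+1} = \rho_k - \tfrac{b-1}{2} + O(1/\rho_k)$ is correct in form, but the constant $6/(b+1)$ is fixed precisely by the explicit coefficient of the $1/\rho_k$ correction, and that coefficient involves the \emph{cubic} term $\binom{b}{3}v_k^3$ coming from the expansion of $(1+v_k)^b$ in the variance recursion. Keeping only the quadratic approximation, the coefficient of $v_k=1/\rho_k$ in the $\rho$-increment comes out as $\tfrac{(b-1)^2}{4}$; retaining the cubic term changes it to $\tfrac{(b-1)^2}{4} - \tfrac{(b-1)(b-2)}{6} = \tfrac{(b-1)(b+1)}{12}$, and it is this corrected coefficient that produces $6/(b+1)$ after the matching. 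This is exactly why the paper introduces a third-order approximate map $\widetilde M_n$ (with the cubic term) specifically for the critical analysis, even though the quadratic $\widehat M_n$ sufficed subcritically. Your $O(v_k^3)$ bucket hides a term that is nonnegligible at criticality, and you would need to surface it explicitly before the matched asymptotics can close.

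Your CLT argument by iterating the characteristic function $\phi_k$ is a genuinely different route from the paper's, and a substantially harder one. The paper's key observation is that, after truncating to the quadratic-order recursion, the scaled fluctuation restricted to level $k$, namely $\widehat R_{k,n}(\widehat\beta) = b^{-(n-k)}\sum_{g\in G_{k,n}}\widehat R_n(\widehat\beta;g)$, is \emph{literally} a normalized sum of $b^{2(n-k)}$ i.i.d.\ random variables, so the Lindeberg--Feller theorem applies directly with no error accumulation over iterations. The remaining work is to show $R_n - \widehat R_n \to 0$ in $L^2$ by comparing variances and to split off a slowly growing boundary layer $u_n$ at $k=n$ so that the sum has enough terms. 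Your characteristic-function recursion would instead need to control $|\phi_k(t) - e^{-\sigma_k^2 t^2/2}|$ over $n$ successive nonlinear steps, which requires uniform higher-cumulant estimates and is especially unforgiving at criticality where the logarithmic scaling leaves almost no room for accumulated error --- a concern you raise yourself. It is not necessarily wrong, but the self-similar CLT-sum decomposition is a decisive simplification and is worth adopting.
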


\begin{remark}
We will see in Section \ref{SecWeakDisorder} that the cut-off $\kappa_{b}$ is the blow-up point of the solution $\phi_{b}:\R^{+}\rightarrow \R^{+}$ to the  differential equation
\begin{align*}
\text{ } \hspace{3cm}\frac{d}{dr}\phi_{b}(r) \, = \, \frac{b-1}{2} \big(\phi_{b}(r)\big)^{2}\, +\,\frac{b-1}{b} \,, \hspace{1cm} \text{with}\quad \phi_{b}(0)=0\,.
\end{align*}
The solution is explicitly given by  $\phi_{b}(r)= \frac{\sqrt{2}}{\sqrt{b}}\tan\big( \frac{b-1}{\sqrt{2b}}r  \big)$.  Note that $\kappa_{2}=\pi$.
\end{remark}

\begin{remark}For $\BA<\kappa_b$ and large $n$, a simple Taylor expansion shows that the free energy-type quantity $\ds\log\big(Z_{n}(\widehat{\beta}/n)\big) $ is close to $\ds W_{n}\big( \widehat{\beta}/n \big) -1$ and the convergence~(\ref{WConv}) is equivalent to
\begin{align*}
n^{\frac{1}{2}}\log\Big(Z_{n}\Big(\frac{\widehat{\beta}}{n}\Big) \Big)   \quad  \stackrel{\mathcal{L}}{\Longrightarrow} \quad  \mathcal{N}\Big( 0,\, \upsilon_{b}(\widehat{\beta}) \Big) \,.
\end{align*}

\end{remark}

To give a more complete picture of the behavior of $W_{n}(\beta_{n})$ with $\beta_{n}\searrow 0$ for large $n$    in the $b\neq s$ cases, we state the following theorem.

\begin{theorem}\label{ThmMainII}  Let $\BA>0$ and $\beta_{n}\searrow 0$.

\begin{enumerate}[i)]

\item When $b<s$ we have the following convergences in  law for  $W_{n}(\beta_{n})  $ depending on  $\beta_{n}$:
 $$\text{}\hspace{1cm}W_{n}(\beta_{n})  \quad  \stackrel{\mathcal{L}}{\Longrightarrow} \quad \begin{cases} 1, & \quad  \beta_{n}\big(\frac{s}{b}\big)^{n}\,\rightarrow \,0 \, ,  \vspace{.2cm}  \\
  L_{\widehat{\beta}^2\frac{s-1}{s-b}}^{b,s},   &  \quad   \beta_{n}\big(\frac{s}{b}\big)^{n}\,\rightarrow \,\BA\,,
  \\  0 ,  & \quad   \beta_{n}\big(\frac{s}{b}\big)^{n}\,\rightarrow \,\infty  \, . \end{cases}  $$

\item  When $b>s$ then $W_{n}(\beta_{n})$ converges to $1$.  A more  refined characterization of $W_{n}\big( \beta_{n}) $ is
\begin{align}\label{WConvII}
  \frac{1}{\beta_{n}}\big(W_{n} (\beta_{n}) -1\big)    \quad  \stackrel{\mathcal{P}}{\Longrightarrow} \quad   \sum_{m=1}^{\infty} \frac{1}{b^{m}}\sum_{ a\in V_{m}    }\omega_{a}     \,,
\end{align}
where $V_{m}$ is the set of $m^{th}$ generation vertices.

\end{enumerate}

\end{theorem}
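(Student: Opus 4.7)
My approach unifies parts (i) and (ii) through two complementary tools: a scalar recursion for the variance $v_n(\beta):=\textup{Var}(W_n(\beta))$, and a polynomial expansion of $W_n(\beta_n)$ in the centered weights $\xi_a:=E_a(\beta_n)-1$. Taking second moments in the distributional identity (\ref{Induct}) and using independence produces the recursion
\begin{equation*}
v_{n+1}(\beta)\,=\,\frac{1}{b}\bigl[(1+v_n(\beta))^{s}(\mathbb{E}[E(\beta)^2])^{s-1}-1\bigr],\qquad v_0(\beta)=\mathbb{E}[E(\beta)^2]-1\,.
\end{equation*}
Linearizing for small $v,\beta$ gives $v_{n+1}\approx (s/b)\,v_n+(s-1)\beta^2/b$, so the behavior is governed by the ratio $s/b$.

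For part (i), the subcritical regime follows by iterating to obtain $v_n(\beta_n)\lesssim (s/b)^n \beta_n^2+O(\beta_n^2)$, which vanishes because $\beta_n^2(s/b)^n\to 0$; consequently $W_n(\beta_n)\to 1$ in $L^2$. The critical regime is exactly Theorem~\ref{ThmMainIII}. For the supercritical regime, I would exploit the martingale identity $W_m(\beta)=\mathbb{E}[W_n(\beta)\mid\mathcal{F}_m]$ at fixed $\beta$: given a large $\widehat{\beta}>0$ and $n$ large, choose $m_n<n$ so that $\beta_n\asymp \widehat{\beta}(b/s)^{m_n/2}$. Then $m_n\to\infty$ (since $\beta_n\to 0$) and $n-m_n\to\infty$ (since $\beta_n^2(s/b)^n\to\infty$), so Theorem~\ref{ThmMainIII} together with the continuity of $L_r^{b,s}$ in $r$ produces subsequential limits $L^{b,s}_r$ with $r\geq \widehat{\beta}^2(s-1)/(s-b)$. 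The conditional Markov inequality applied to $W_{m_n}(\beta_n)=\mathbb{E}[W_n(\beta_n)\mid\mathcal{F}_{m_n}]$ yields
\begin{equation*}
\mathbb{P}[W_n(\beta_n)>\epsilon]\,\leq\,\mathbb{P}[W_{m_n}(\beta_n)>\epsilon^2]+\epsilon\,,
\end{equation*}
and sending $n\to\infty$ then $\widehat{\beta}\to\infty$, Lemma~\ref{LemTransition} ($L^{b,s}_r\Rightarrow \delta_0$) drives the first term to zero, giving $W_n(\beta_n)\to 0$ in probability.

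For part (ii), $b>s$ makes the linearized recursion contractive with multiplier $s/b<1$, so $v_n(\beta)$ converges geometrically fast to the fixed point $v_*(\beta)=(s-1)\beta^2/(b-s)+O(\beta^4)$; in particular $v_n(\beta_n)\to 0$ and $W_n(\beta_n)\to 1$ in $L^2$. For the refined limit I would write
\begin{equation*}
W_n(\beta_n)-1\,=\,T_n+R_n,\qquad T_n\,:=\,\sum_{a\in D_n}q_n(a)\,\xi_a\,,
\end{equation*}
with $q_n(a):=|\{p\in \Gamma_n : a\in p\}|/|\Gamma_n|$ and $R_n$ the multilinear remainder indexed by subsets of size $\geq 2$. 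The self-similar construction of $D_n$ implies every edge of $D_k$ is traversed by the uniform path measure with equal probability $1/b^k$, from which I would deduce $q_n(a)=1/b^m$ for $a\in V_m$. Using $\xi_a=\beta_n\omega_a+O(\beta_n^2)$ and $|V_m|=b(s-1)(bs)^{m-1}$ then gives $\beta_n^{-1}T_n\to S:=\sum_{m\geq 1}b^{-m}\sum_{a\in V_m}\omega_a$ in $L^2$, with $\textup{Var}(S)=(s-1)/(b-s)$ finite thanks to $b>s$.

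The main obstacle is controlling $R_n$. Independence of the $\omega_a$ together with $\mathbb{E}[\xi_a]=0$ forces $\mathbb{E}[T_n R_n]=0$, so $\mathbb{E}[R_n^2]=v_n(\beta_n)-\mathbb{E}[T_n^2]$. A direct computation using $q_n(a)=1/b^m$ and $\mathbb{E}[\xi_a^2]=\beta_n^2+O(\beta_n^3)$ gives $\mathbb{E}[T_n^2]=\beta_n^2(s-1)/(b-s)+o(\beta_n^2)$, which must be matched to the leading order of $v_n(\beta_n)$ from the fixed-point analysis. The technical heart of the argument is sharpening the recursion to show $v_n(\beta_n)-v_*(\beta_n)=o(\beta_n^2)$, which I expect to follow from the geometric contraction rate $s/b<1$ near the fixed point applied to the $O(\beta_n^2)$ initial deviation $v_0(\beta_n)-v_*(\beta_n)$. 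Once $\mathbb{E}[R_n^2]=o(\beta_n^2)$ is in hand, $\beta_n^{-1}(W_n(\beta_n)-1)\to S$ in $L^2$ and hence in probability.
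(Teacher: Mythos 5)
Your proposal is essentially correct and uses the same key mechanisms as the paper, with one genuinely different piece. For part (ii), the paper defines a linearized random variable $\widehat R_n$ by a recursion that drops the quadratic terms in \eqref{ReRec} (so that $\widehat R_n(D_n)=\sum_{m\leq n}b^{-m}\sum_{a\in V_m}\omega_a$ exactly), shows $R_n-\widehat R_n$ is uncorrelated with $\widehat R_n$, and bounds $\mathbb{E}[(R_n-\widehat R_n)^2]=M_n^n(0)-\widehat M_n^n(0)=O(\beta_n)$ via the variance maps (Lemma~\ref{LemMS}). Your decomposition $W_n-1=T_n+R_n$ into linear and degree-$\geq 2$ chaos in $\xi_a$, together with $\mathbb{E}[T_nR_n]=0$ and $\mathbb{E}[R_n^2]=v_n(\beta_n)-\mathbb{E}[T_n^2]$, is morally the same uncorrelatedness-plus-variance-matching argument. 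The difference is only where the linearization is taken: in $\omega_a$ (paper) versus in $\xi_a=E_a(\beta_n)-1$ (you), which costs you the extra step $\xi_a=\beta_n\omega_a+O(\beta_n^2)$ with variance-$O(\beta_n^4)$ correction, but buys you a cleaner orthogonality justification. Your ``technical heart'' ($v_n(\beta_n)-v_*(\beta_n)=o(\beta_n^2)$ via contraction) is exactly what Lemma~\ref{LemMS} delivers; the contraction argument works because the linearized multiplier is $s/b<1$ and the nonlinear correction to $M_n$ is $O(\beta_n^2)$ per step, so the accumulated error is $O(\beta_n^2)\sum_k(s/b)^k=O(\beta_n^2)$ relative to $\widehat M_n^n(0)$, and in fact $O(\beta_n)$ absolutely. (Minor slip: $v_0(\beta)=0$, not $\mathbb{E}[E(\beta)^2]-1$, since $W_0\equiv 1$; this does not affect the asymptotics.)

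For part (i), the subcritical case matches the obvious variance computation, and the critical case is Theorem~\ref{ThmMainIII}. The supercritical case is where you diverge from the paper: the paper simply cites the homogeneous-environment tilting of Lacoin--Moreno, whereas you propose a self-contained reduction using the $\beta$-martingale $W_m(\beta)=\mathbb{E}[W_n(\beta)\mid\mathcal{F}_m]$, the conditional Markov estimate $\mathbb{P}[W_n>\epsilon]\leq\mathbb{P}[W_{m_n}>\delta]+\delta/\epsilon$, Theorem~\ref{ThmMainIII} applied along a sub-scale $m_n$ chosen so that $\beta_n(s/b)^{m_n/2}\asymp\widehat\beta$, and Lemma~\ref{LemTransition} to send $\widehat\beta\to\infty$. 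This works (one should decouple $\delta$ from $\epsilon$ and send $\delta\to0$ after $\widehat\beta\to\infty$; your choice $\delta=\epsilon^2$ gives $\limsup_n\mathbb{P}[W_n>\epsilon]\leq\epsilon$, which since $\epsilon\mapsto\mathbb{P}[W_n>\epsilon]$ is non-increasing implies the full claim anyway). One also needs the version of Theorem~\ref{ThmMainIII} that allows $\beta_n(s/b)^{m_n/2}\to\widehat\beta$ rather than exactly equal, but that flexibility is built into Theorem~\ref{ThmEdge}. It is a cute organization: the tilting is used only through Lemma~\ref{LemTransition} to establish $L_r^{b,s}\Rightarrow\delta_0$, and the martingale then propagates the strong-disorder conclusion up the supercritical window. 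This is genuinely different in structure from the paper's cite-and-move-on, and perhaps more self-contained.
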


\begin{remark}
In (i) of the above theorem,  the convergence of  $W_{n}( \beta_{n})$ to zero in the $b<s$ case when $\beta_{n}(b/s)^{n/2}\rightarrow \infty$  follows from the homogeneous environment tilting method in the proofs from Sections 5.2-5.6 of~\cite{lacoin}.
\end{remark}

\begin{remark}\label{Remarkb>s}
In the $s < b$ case, since there is a region of finite temperature in which weak disorder holds, it would be interesting to find an intermediate disorder regime by scaling towards the critical value of $\beta_c$ as the system size grows, that is, to consider
\begin{align*}
W_n(\beta_c + \beta_{n})
\end{align*}
for an appropriate sequence $\beta_{n}\searrow 0$ . Perturbations of this type are difficult to study, however, since they typically require fine properties of the Gibbs measure $\mu_{\beta_c, n}^{(\omega)}$ at criticality. For directed polymers on trees results of this type were obtained in \cite{Alberts_Ortgiese}, based on an earlier analysis of the Gibbs measure at criticality in \cite{Hu}.
\end{remark}

\section{Basic constructions, notation, and heuristics}\label{SecDef}

\subsection{More on diamond graphs}

Now we will give a more in-depth description of the diamond graph structure. The set of copies of $D_{n}$ found in $ D_{m+n}$ has a canonical one-to-one correspondence with the edges of $D_{m}$.  We will use the following notation:
\begin{itemize}

\item The symbol  $D_{n}$ for the $n^{th}$ diamond graph also denotes the set of vertices on the graph, excluding the roots  $A$ and $B$.

\item For $k\leq n$, let $V_{k}\subset D_{n}$ denote the set of $k^{th}$ generation vertices in $D_{n}$, in other words, those vertices that appear in $D_k$ but not in $D_{k-1}$.

\item  $E_{n}$ denotes the set of edges on the graph $D_{n}$.

\item For $k\leq n$, $G_{k,n}$ denotes the set of copies of $D_{k}$ found in $D_{n}$. An element $g\in G_{k,n}$ is also regarded as a subset of the set of vertices $D_{n}$. 

\end{itemize}
By construction of $D_n$ the elements in $G_{k,n}$ are in one-to-one correspondence with elements in $E_{n-k}$.

\begin{center}
\includegraphics[scale=.5]{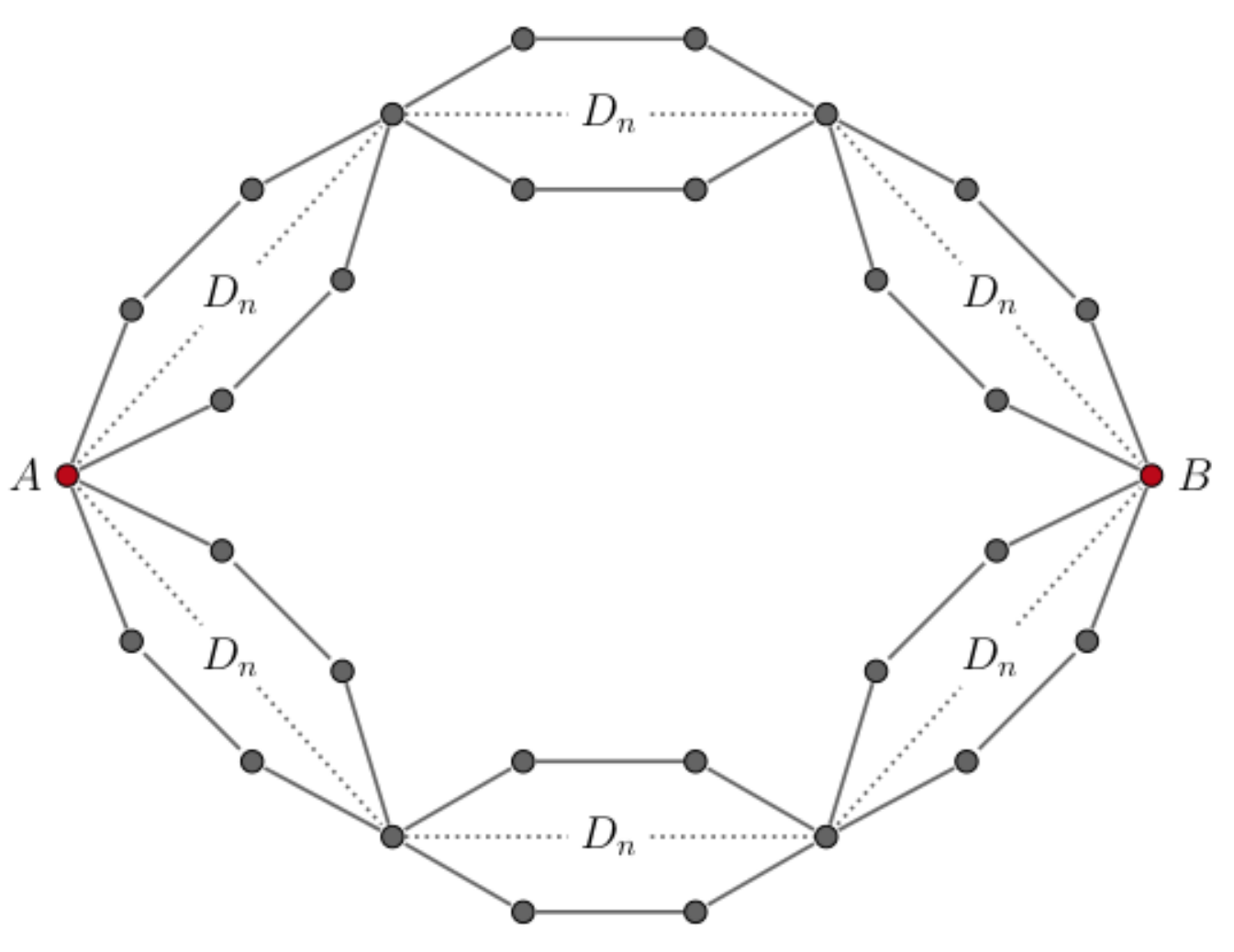}\\
Copies of $D_{n}$ embedded in $D_{n+m}$. \\
\end{center}

The edges of $D_{1}$ can be labeled by $(i,j) \in \{1,\cdots,\bbf   \}\times \{1,\cdots, \sbf \}$, where $(i,j)$ refers to the $j^{th}$ segment of the $i^{th}$ branch.  More generally, the set of edges  $E_{n}$ in $D_{n}$ can be identified with
$$ E_{n}\equiv \big(\{1,\cdots,\bbf   \}\times \{1,\cdots, \sbf\}\big)^{n}.$$
The correspondence follows from the recursive procedure for the construction of $D_{n}$:
\begin{align*}
&\hspace{4.8cm}\textbf{(Encodes an element of } E_{1} \textbf{ in the local copy of } D_{1} \textbf{)}  \\
&\hspace{4cm}\underbrace{(i_{1},j_{1})\times \cdots \times (i_{n-1},j_{n-1})} \times  \overbrace{(i_{n}, j_{n})} \\
&\textbf{(Encodes  an element of }  G_{1,n}\equiv E_{n-1}\textbf{, the set of copies of } D_{1} \textbf{ in  } D_{n}\textbf{)}
\end{align*}
Similarly, elements in $G_{k,n}$ can be labeled by $\big(\{1,\cdots,\bbf   \}\times \{1,\cdots, \sbf\}\big)^{n-k}$.  The elements of $G_{k-1,n}$ that are contained in $g \in G_{k,n}$ will be labeled by
$$  g\TS (i,j) \, ,  $$
where $i\in \{1,\cdots,\bbf   \}$, and $j\in \{1,\cdots,\sbf   \}$.

The $k^{th}$  generation vertices $V_{k}$ are identified with the set
\begin{align*}
G_{n-k+1,n}\times \big(  \{ 1,\cdots ,\bbf    \}\times \{1,\cdots,\sbf -1    \}     \big) \, .
\end{align*}
For $g\in G_{k,n}$,  we will label the generation $n-k+1$ vertices that are contained in $g$ by
 $$g\DD (i,j)\, ,$$
where  $i\in \{1,\cdots,\bbf   \}$ and $j\in \{1,\cdots,\sbf-1   \}$.\vspace{.3cm}

We have the  following basic combinatorial observations regarding these graphs:
\begin{enumerate}[i)]

\item The graph $D_{n}$ has $\bbf^{n}\sbf^{n}$ edges, i.e., $|E_{n}|=\bbf^{n}\sbf^{n} $.  Since the subgraphs $G_{k,n} $ and the edges $E_{n-k}$ are in one-to-one correspondence, this implies that  $|G_{k,n}|=\bbf^{n-k}\sbf^{n-k}$.

\item The graph $D_{n}$ has $\bbf^{k}\sbf^{k-1}(\sbf -1)$ vertices of generation $k$ when $k\leq n$, i.e., $|V_{k}|= \bbf^{k}\sbf^{k-1}(\sbf -1)$.

\item  An element $g\in G_{k,n}$ contains $\bbf^{j-n+k}\sbf^{j-n+k-1}(\sbf -1)$ vertices of generation $  j\in (  n-k, n]$.

\item There are $\bbf^{\frac{\sbf^{n}-1}{\sbf -1}}$ paths between the root vertices $A$ and $B$ in $D_{n}$, i.e.,  $|\Gamma_{n}|=\bbf^{\frac{\sbf^{n}-1}{\sbf -1}}$.

\end{enumerate}

\subsection{Definitions and recursive relations for statistical quantities}\label{SubSecRecur}

Recall that $G_{k,n}$ refers to the collection of copies of $D_{k}$ found as subgraphs of  $D_{n}$.
  For $g\in G_{k,n}$, we define $W_{n}(\beta; g)$ in analogy to  $W_{n}(\beta)$ except restricted to the subgraph $g$.   To be clear, $W_{n}(\beta; g)$ does not depend on the random variables at the roots of the subgraph $g$.  The following lemma is  essentially a  restatement of~(\ref{Induct}) in the subgraph notation:

\begin{lemma}  For $g\in G_{k,n}$, the random variables $W_{n}(\beta; g)$ satisfy the recursive relation
\begin{align}\label{WRecur}
W_{n}(\beta; g)\, =\,  \frac{1}{b}\sum_{i=1}^{b} \prod_{1\leq j\leq s}W_{n}\big(\beta; g\TS (i,j)\big)\prod_{1\leq j\leq s-1}  E_{g\diamond (i,j)}(\beta   )\,
\end{align}
with  $W_{n}(\beta; g):=1$ for $g\in G_{0,n}$.

\end{lemma}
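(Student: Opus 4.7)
The plan is to prove the recursion directly from the definition of $W_{n}(\beta;g)$ as a normalized partition function restricted to the subgraph $g$, by decomposing each path through $g$ according to the recursive construction of $g$ as a copy of $D_{k}$. Since the identity is purely combinatorial (the disorder factors through multiplicativity of the products $\prod E_{a}$), the real task is careful bookkeeping of path counts and vertex labels.

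First I would spell out $W_{n}(\beta;g)$. Let $\Gamma(g)$ denote the set of directed paths between the two root vertices of $g\in G_{k,n}$ and let $\textup{int}(g)$ denote the set of non-root vertices of $g$. Since the $\omega_{a}$ at the roots of $g$ are excluded, the definition reads
\begin{equation*}
W_{n}(\beta;g)\,=\,\frac{1}{|\Gamma_{k}|}\sum_{p\in\Gamma(g)}\prod_{\substack{a\in p\\ a\in\textup{int}(g)}}E_{a}(\beta).
\end{equation*}
By construction $g$ is assembled from $\bbf$ branches indexed by $i\in\{1,\dots,\bbf\}$, the $i$th branch consisting of the $\sbf$ sub-copies $g\TS(i,1),\dots,g\TS(i,\sbf)\in G_{k-1,n}$ joined in series at the generation-$(n-k+1)$ vertices $g\diamond(i,1),\dots,g\diamond(i,\sbf-1)$. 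Every $p\in\Gamma(g)$ therefore traverses exactly one branch $i$ and splits uniquely into sub-paths $p_{j}\in\Gamma(g\TS(i,j))$ for $j=1,\dots,\sbf$; moreover its non-root vertices in $g$ are precisely the $\sbf-1$ intermediate vertices $g\diamond(i,j)$ together with the interior vertices of the sub-copies.

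Next I would factor the path sum. Regrouping by branch, pulling the intermediate-vertex weights in front, and factorizing the product over the sub-copies gives
\begin{equation*}
\sum_{p\in\Gamma(g)}\prod_{\substack{a\in p\\ a\in\textup{int}(g)}}E_{a}(\beta)\,=\,\sum_{i=1}^{\bbf}\bigg(\prod_{j=1}^{\sbf-1}E_{g\diamond(i,j)}(\beta)\bigg)\prod_{j=1}^{\sbf}\sum_{p_{j}\in\Gamma(g\TS(i,j))}\prod_{\substack{a\in p_{j}\\ a\in\textup{int}(g\TS(i,j))}}E_{a}(\beta),
\end{equation*}
and the innermost sum equals $|\Gamma_{k-1}|\cdot W_{n}(\beta;g\TS(i,j))$ by definition.

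Finally comes the normalization check. Using $|\Gamma_{k}|=\bbf^{(\sbf^{k}-1)/(\sbf-1)}$ from the combinatorial list in the previous subsection, one verifies $|\Gamma_{k-1}|^{\sbf}/|\Gamma_{k}|=\bbf^{-1}$, and substituting this prefactor turns the previous display into exactly \eqref{WRecur}. The base case $g\in G_{0,n}$ is immediate since such a $g$ is a single edge with no interior vertices and a unique admissible path, so the empty product equals $1$. I don't anticipate a real obstacle here; the lemma is essentially the distributional identity \eqref{Induct} transcribed into the subgraph notation of Section~\ref{SecDef}, with the factor $1/\bbf$ produced by the path-count identity above rather than by an appeal to the earlier random-variable identity.
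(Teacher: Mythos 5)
Your proof is correct. The paper in fact gives no proof of this lemma at all, describing it simply as ``essentially a restatement of~(\ref{Induct}) in the subgraph notation''; your direct combinatorial verification---decomposing each path in $\Gamma(g)$ by branch and sub-copy, factoring the weight product, and checking $|\Gamma_{k-1}|^{s}/|\Gamma_{k}|=b^{-1}$---is exactly the bookkeeping the paper leaves implicit.
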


From this lemma it is straightforward to derive a recursive relation for the variance of the $W_n$ variables.

\begin{corollary}\label{Sig}
For $g\in G_{k,n}$, the variance $\sigma_{k}(\beta):=\textup{Var}\big[  W_{n}(\beta; g)\big]$ satisfies the recursive relation in $k\in \mathbb{N}$
$$
\sigma_{k+1}(\beta)\, =\,  \frac{1}{b}\Big[ \big(1+\sigma_{k}(\beta)\big)^{s}e^{(s-1)[\lambda(2\beta)-2\lambda(\beta)]} \,- \, 1\Big]\, .
$$
\end{corollary}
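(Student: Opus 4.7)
The plan is a direct computation from the distributional recursion~\eqref{WRecur}. Since $W_n(\beta;g)$ is a mean-one martingale (in particular $\mathbb{E}[W_n(\beta;g)]=1$), the variance can be written as $\sigma_k(\beta)=\mathbb{E}[W_n(\beta;g)^2]-1$. So the strategy is: square the identity~\eqref{WRecur}, take expectations, and exploit independence to reduce everything to $\sigma_k(\beta)$ and the single scalar $\mathbb{E}[E_a(\beta)^2]$.

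First I would record the independence structure. For $g\in G_{k+1,n}$ the $b$ branches indexed by $i\in\{1,\dots,b\}$ correspond to disjoint subgraphs in $G_{k,n}$ and are therefore independent; within a fixed branch $i$, the $s$ factors $W_n(\beta;g\TS(i,j))$ come from disjoint sub-subgraphs and are mutually independent, and they are independent from the edge weights $E_{g\diamond(i,j)}(\beta)$, which are themselves i.i.d. This lets me expand
\begin{align*}
\mathbb{E}\!\left[W_n(\beta;g)^2\right]
&= \frac{1}{b^2}\sum_{i,i'=1}^{b}\mathbb{E}\!\left[\prod_{j=1}^{s}W_n(\beta;g\TS(i,j))W_n(\beta;g\TS(i',j))\right]\mathbb{E}\!\left[\prod_{j=1}^{s-1}E_{g\diamond(i,j)}(\beta)E_{g\diamond(i',j)}(\beta)\right].
\end{align*}
For the $b(b-1)$ off-diagonal pairs $i\neq i'$ the two branches are independent, and each factor has mean $1$, so those terms contribute $b(b-1)$. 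For the $b$ diagonal terms $i=i'$, the independence across $j$ factors the expectations into products, giving $(1+\sigma_k(\beta))^s$ for the $W$-product and $\mathbb{E}[E_a(\beta)^2]^{s-1}$ for the $E$-product.

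Next I would compute the elementary identity $\mathbb{E}[E_a(\beta)^2]=\mathbb{E}[e^{2\beta\omega_a-2\lambda(\beta)}]=e^{\lambda(2\beta)-2\lambda(\beta)}$. Substituting everything back yields
\begin{align*}
\mathbb{E}\!\left[W_n(\beta;g)^2\right] \;=\; \frac{1}{b^2}\Bigl[\,b\,(1+\sigma_k(\beta))^{s}\,e^{(s-1)[\lambda(2\beta)-2\lambda(\beta)]}\;+\;b(b-1)\Bigr],
\end{align*}
and subtracting $1$ from both sides rearranges into exactly the claimed recursion for $\sigma_{k+1}(\beta)$. The base case $\sigma_0(\beta)=0$ is consistent with the convention $W_n(\beta;g)=1$ for $g\in G_{0,n}$.

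There is no real obstacle here: the result is an unwinding of~\eqref{WRecur} at the level of second moments. The only point that needs mild care is the bookkeeping of what is independent from what, in particular that the edge weights $E_{g\diamond(i,j)}(\beta)$ (which live at the generation-$(n-k)$ vertices newly exposed when $g$ is subdivided) are independent of the $W_n(\beta;g\TS(i,j))$ (which depend only on the descendants of those vertices). This is built into the definition of $W_n(\beta;g)$ as excluding the root variables of $g$, so once that is noted the calculation above goes through verbatim.
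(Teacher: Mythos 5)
Your proof is correct and is exactly the straightforward second-moment computation the paper has in mind (the paper leaves this as an unproved ``straightforward'' corollary of the recursion~\eqref{WRecur}). The independence bookkeeping, the off-diagonal/diagonal split contributing $b(b-1)$ and $b(1+\sigma_k(\beta))^s\,e^{(s-1)[\lambda(2\beta)-2\lambda(\beta)]}$ respectively, and the identity $\mathbb{E}[E_a(\beta)^2]=e^{\lambda(2\beta)-2\lambda(\beta)}$ are all right and assemble into the stated recursion.
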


\subsection{Heuristic motivation }\label{SecHeuristic}

We now offer a short heuristic analysis motivating the  scalings $\beta=\BA (b/s)^{n/2}$ and $\beta=\BA/n$ for the respective cases  $b<s$ and $b=s$.  We begin by  naively linearizing  the expression~(\ref{ExpW}) for the normalized partition function to obtain
\begin{align}\label{Boondog}
W_{n}(\beta_{n})\,\approx \, &\frac{1}{|\Gamma_{n}|}\sum_{p\in \Gamma_{n}}\prod_{a\in p} \big(1+\beta_{n} \omega_{a} \big)\nonumber \,,
\intertext{which is intuitively justified for small $\beta_{n}$.  The right side above can be expanded into the form}
\,=\, &1+ \sum_{m \geq 1} \sum_{\substack{a_{1},\cdots, a_{m} \in D_{n}\\ a_{i}\neq a_{j}}  }\beta_{n}^{m}P(a_{1},\cdots, a_{m})\omega_{a_1}\cdots \omega_{a_m}\, ,
\end{align}
where  $P(a_{1},\cdots,a_m)$ is the probability that the vertices $a_{1},\cdots, a_{m} \in D_{n}$ lie on a random path $p\in \Gamma_{n}$.    We seek a choice of  $\beta_{n}$ such that $W_{n}(\beta_{n})$ converges in law to a nontrivial limit with large $n$.  A reasonable method for finding a candidate for $\beta_{n}$ is to determine the appropriate scaling for the first-order term (i.e. the $m=1$ term) of~(\ref{Boondog}), which is merely a sum of i.i.d.\ random variables, and thus characterized by the central limit theorem.  This method of choosing $\beta_{n}$ turns out to work for $b<s$ but to be misleading in the case of $b=s$.
   The variance of the first-order term in~(\ref{Boondog}) has the form
\begin{align}\label{Blahh}
\textup{Var}\bigg(\beta_{n}\sum_{a\in D_{n} }P(a)\omega_{a}\bigg)  \,=\, \beta_{n}^2 \sum_{a\in D_{n}}\big(P(a)\big)^{2} \,=\,& \beta_{n}^2\sum_{k=1}^{n}(\text{$\#$  of $k^{th}$ generation vertices}) b^{-2k}\nonumber
\\  \,=\, &\beta_{n}^2\sum_{k=1}^{n}\frac{s-1}{s}\big(\frac{s}{b}\big)^{k} \nonumber
\\  \,\approx \, & \beta_{n}^2\frac{s-1}{s}\cdot \begin{cases}  \big(\frac{s}{b}\big)^{n}\big(1-\frac{b}{s}\big)^{-1}   \,, &  b<s \,, \\   \,n\,,  &  b=s\, . \end{cases}
\end{align}
Thus, by the central limit theorem, the first-order term from~(\ref{Boondog}) converges in law to a Gaussian when  $\beta_{n}=(b/s)^{n/2}$ and $\beta_{n}=n^{-1/2}$ in  the $b<s$ and $b=s$ cases, respectively.

 Now we perform the same exercise with the second-order term in~(\ref{Boondog}) to check whether the above scalings are still appropriate.  For $a_{1},a_{2}\in D_{n}$, the probability $P(a_{1},a_{2})$ has the form
$$P(a_{1},a_{2})=\epsilon(a_{1},a_{2})\exp\big\{ \frak{l}(a_{1},a_{2})  -\frak{g}(a_{1})-\frak{g}(a_{2}) \big\}\, , $$
where $\epsilon(a_{1},a_{2})$ is $0$ when no directed path passes through both of the vertices $a_{1}$ and $a_{2}$,  $ \frak{g}(a)$ is the generation of the vertex, and  $ \frak{l}(a_{1},a_{2}) $ is the smallest $k$ such that $a_{1},a_{2}\in g$ for some $g\in G_{n-k,n}$.    The variance of the second-order term has the form
\begin{align*}
\textup{Var}\Bigg(&\beta_{n}^2 \sum_{\substack{a_{1},a_{2}\in D_{n}\\ a_{1}\neq a_{2}} } P(a_{1},a_{2})\omega_{a_{1}}\omega_{a_{2}}\Bigg)   \, =\, \beta_{n}^4 \sum_{\substack{a_{1},a_{2}\in D_{n}\\ a_{1}\neq a_{2}}}\big(P(a_{1},a_{2})\big)^2
 \, =\, \beta_{n}^4  \sum_{k=1}^{n}\sum_{\substack{a_{1},a_{2}\in D_{n}\\  k= \frak{l}(a_{1},a_{2})}}   \big(P(a_{1},a_{2})\big)^2 \,.
\end{align*}
The self-similar structure of the diamond graphs allows the above to be written as
\begin{align*}
 \beta_{n}^4 \sum_{k=1}^{n} b^{-2j}|G_{n-k,n}|  \left( \frac{b(s-1)(s-2)   }{2}+  bs(s-1)\sum_{a\in D_{n-k}}\big(P(a)\big)^{2}+\frac{bs(s+1)}{2}\bigg(\sum_{a\in D_{n-k}}\big(P(a)\big)^{2}\bigg)^2  \right)  .
\end{align*}
The three terms above correspond to the respective cases  $\frak{g}(a_{1})=\frak{g}(a_{2})=k$,  $\frak{g}(a_{1})<\frak{g}(a_{2})=k$ (or vice versa), and   $\frak{g}(a_{1}) ,\frak{g}(a_{2}) <k$.  The third term is dominant, and we can apply that $|G_{n-k,n}|=(bs)^{k}$ and  our analysis in~(\ref{Blahh}) to get that the above is approximately
\begin{align*}
\, \approx \,& \beta_{n}^4 \sum_{k=1}^{n}\big(\frac{s}{b}\big)^{k}  \frac{bs(s+1)}{2}\Big( \frac{s-1}{s}\Big)^2 \cdot\begin{cases}  \big(\frac{s}{b}\big)^{2(n-k)}  \big(1-\frac{b}{s}\big)^{-2}\,,    &  b<s \,  , \vspace{.1cm} \\ (n-k)^2\,,  &  b=s\, . \end{cases}
\intertext{Finally, by standard summation formulas, this is equal to  }
  \,= \, & \beta_{n}^4\frac{ (s+1)(s-1)^2}{2} \cdot \begin{cases} \frac{(\frac{b}{s})^2}{(1-\frac{b}{s})^3} \big(\frac{s}{b}\big)^{2n} \,+\,\mathit{O}\big((\frac{s}{b})^{n}\big)  \,,  & \quad  b<s\, , \vspace{.1cm} \\  \frac{1}{3}n^{3}\,+\,\mathit{O}(n^2)\,,  & \quad  b=s \, .  \end{cases}
\end{align*}
For $b<s$ the choice $\beta_{n}=(b/s)^{n/2}$  again yields a variance  that converges  for the second-order term, as would be expected.  In contrast,  for $b=s$  the previous choice of $\beta_{n}=n^{-1/2}$ causes the variance of the second-order term to blow up with large $n$.   Notice, however, that if $\beta_{n}$ is scaled in proportion to $ n^{-1} $ in the  $b=s$ case, then the first- and second-order terms both have variances of the same order,  $\propto n^{-1}$.  This suggests that
$  \sqrt{n} \big( W_{n}\big(\BA /n\big)\,-\,1\big)    $
is an interesting rescaled version of the normalized partition function to study when $b=s$.   It is possible that there is a nontrivial limit for $W_{n}\big(\BA /\sqrt{n}\big)$ in the $b=s$ case, but the limiting distribution will not have finite variance and we do not explore this possibility here.

\section{A limit theorem in the $b<s$ case}\label{Secb<s}

In this section we prove Theorem~\ref{ThmMainIII}.  We first work on a slightly simpler problem in which random variables are placed on the edges of the diamond graph rather than the vertices. We use a coupling of Gaussian random variables across the different generations of the lattice to prove that a certain linearized version of the partition function is a Cauchy sequence, thereby establishing the existence of the $L_{r}^{b,s}$ laws. We then build on this result to show that the limit laws are universal, and finally translate these results from the edge case to the vertex case.

\subsection{The edge problem   }\label{Secb<sEdge}

The proof of the following lemma is below.

\begin{lemma}\label{LemExist} There exists a family of probability distributions $(L_{r}^{b,s})_{r >0}$  satisfying the properties (I)-(III) listed in Theorem~\ref{ThmMainIII}.
\end{lemma}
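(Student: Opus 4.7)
The plan is to construct the family $(L_r^{b,s})_{r>0}$ as $L^2$-limits of rescaled partition functions in the edge-disorder version of the model with Gaussian noise, parametrized by the limiting variance. Fix $\BA>0$ and use i.i.d.\ standard-Gaussian weights $g_a$ on edges, with $\beta_n := \BA(b/s)^{n/2}$. Let $Z_n := W_n^{\mathrm{edge}}(\beta_n)$ be built via the edge analogue of the recursion~(\ref{WRecur}), which in the edge setting has no stray $E_{g\DD(i,j)}$ factors. Place all the $g_a$'s on an infinite hierarchical tree so that the entire sequence $\{Z_n\}_{n\geq 1}$ lives on one probability space, using Gaussian stability (i.e.\ that a product $e^{\beta_1 g_1-\beta_1^2/2}e^{\beta_2 g_2-\beta_2^2/2}$ of independent Gaussian weights realizes $e^{\beta g-\beta^2/2}$ for $\beta=\sqrt{\beta_1^2+\beta_2^2}$) to reconcile the varying weight strengths across levels.

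The core step is to prove $\{Z_n\}$ is Cauchy in $L^2(\mathbb{P})$. I would first introduce the linearization $\widetilde Z_n$ obtained by replacing each edge weight $e^{\beta g-\beta^2/2}$ in the recursion by its first-order Taylor polynomial $1+\beta g$, producing a finite polynomial chaos in $\{g_a\}$. The edge-model simplification of Corollary~\ref{Sig} reads $\sigma_{k+1}(\beta) = \tfrac{1}{b}[(1+\sigma_k(\beta))^s - 1]$; a short induction then shows $\sigma_k(\beta_n) \approx \BA^2(b/s)^{n-k}$ at leading order, so $\sigma_n(\beta_n)$ stays bounded and converges to some $V(\BA)<\infty$. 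Under the coupling, the $L^2$-increments $\|\widetilde Z_{n+1}-\widetilde Z_n\|_2^2$ form a geometric sum in $b/s<1$, giving the Cauchy property for $\{\widetilde Z_n\}$. A parallel estimate gives $\|Z_n-\widetilde Z_n\|_2\to 0$, since the discarded higher-order Taylor terms contribute variance of order $\beta_n^4\cdot(bs)^n\to 0$ under the scaling. Let $X_{\BA}$ denote the $L^2$-limit of $Z_n$, and define $L_r^{b,s}:=\mathrm{Law}(X_{\BA})$ with $r := V(\BA)$; continuity and strict monotonicity of $V$ in $\BA$ supply the full range $r\in(0,\infty)$.

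Property (II) is obtained by passing to the distributional limit in the one-step recursion $Z_{n+1}\stackrel{d}{=}\tfrac{1}{b}\sum_i\prod_j Z_n^{(i,j)}$, realigning temperatures so that the $Z_n^{(i,j)}$ on the right have law $L_r^{b,s}$ and the left side has law $L_{sr/b}^{b,s}$. Property (I) follows by passing to the limit in the variance recursion (combined with a first-order expansion in $\BA^2$ to verify $\lim_{r\searrow 0}\mathfrak v_{b,s}(r)/r = 1$). Property (III) follows from a CLT applied to the first-order chaos component $\BA\sum_a P(a)g_a$ of $\widetilde Z_n$, combined with the bound $\|\widetilde Z_n-1-\BA\sum_a P(a)g_a\|_2 = O(\BA^2)$, which is negligible compared to the $\sqrt{r}\sim\BA$ scale. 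The principal obstacle is executing the $L^2$-Cauchy estimate cleanly: the variance increment between generations hinges on a delicate combinatorial count of pairs of paths in $D_n$ that share edges at prescribed levels, and the scaling $\beta_n=\BA(b/s)^{n/2}$ is precisely calibrated so that the sum of these contributions across generations telescopes into a convergent geometric series.
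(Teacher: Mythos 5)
Your overall strategy (Gaussian edge disorder, linearize, control variances through the recursion, define $L_r^{b,s}$ as the limit law) is in the same spirit as the paper, but the step that actually carries the proof -- the Cauchy property across scales -- is not executed, and the coupling you propose does not supply it. "Gaussian stability" lets you merge independent exponential weights \emph{multiplied along a segment of one branch}, but the passage from $D_{n}$ to $D_{n+1}$ replaces an edge by $b$ branches of $s$ segments, i.e. it involves an average over branches, not only a product along a path. Quantitatively, the product of the $s$ refined weights on one branch is an exponential weight at inverse temperature $\sqrt{s}\,\beta_{n+1}=\sqrt{b}\,\beta_{n}\neq\beta_{n}$, so there is no pathwise identity reproducing the coarse weight, and hence no clean way to compute $\mathbb{E}[Z_{N}Z_{n}]$ under your tree coupling. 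You acknowledge this by deferring the increment bound to "a delicate combinatorial count of pairs of paths," which is precisely a chaos-expansion computation that is never carried out; as it stands the existence of the limit is assumed rather than proved. The paper resolves exactly this point differently: it couples the scales through an \emph{additively} consistent hierarchical Gaussian field (products of Brownian increments), for which the linearized recursion is exactly consistent, $1+\widehat{\omega}_{g}=\mathbf{\overline{W}}_{N-n}\big(\{1+\widehat{\omega}_{a}\}_{a\Try g}\big)$. Then $\mathbf{W}_{N}-\mathbf{W}_{n}$ splits into uncorrelated pieces whose variances are controlled by iterates of the map $\widehat{M}$ (Lemmas~\ref{LemMMap} and~\ref{LemBFW}), with no path-intersection combinatorics at all -- this renormalization-type bookkeeping is the content your proposal is missing.

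Two further points would need repair even granted the limit. First, the claimed error bound "$\beta_{n}^{4}(bs)^{n}\rightarrow 0$" is false as written: $\beta_{n}^{4}(bs)^{n}=\BA^{4}b^{3n}s^{-n}$ diverges for $b<s\le b^{3}$; the correct order, after including the squared path probabilities $b^{-2n}$ per edge, is $\beta_{n}^{4}(s/b)^{n}=\BA^{2}\beta_{n}^{2}$, and turning this heuristic into a bound on $\|Z_{n}-\widetilde{Z}_{n}\|_{2}$ again requires the comparison of iterated variance maps, not a per-edge sum. Second, indexing the family by the limiting variance $r:=V(\BA)$ is inconsistent with properties (I)--(II) of Theorem~\ref{ThmMainIII}: if the index were the variance itself, then combining $b\times s$ independent copies of parameter $r$ produces variance $\frac{1}{b}[(1+r)^{s}-1]$, not the index $\frac{s}{b}r$, so (II) would fail as stated. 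The family must be indexed by the linearized (input) variance parameter -- in your Gaussian setting essentially $\BA^{2}$ -- so that the law $L_{r}^{b,s}$ has variance $\frak{v}_{b,s}(r)$ rather than $r$; this is a fixable but genuine misalignment with the statement you are proving.
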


 The family of measures $(L_{r}^{b,s})_{r\geq 0}$ from Lemma~\ref{LemExist}   can be used to construct a  family of random probability measures on  the set of directed paths $\Gamma_{n}$ whose laws are consistent for all $n\in \mathbb{N}$.   For $k<n $ the path set $\Gamma_{k}$ on $D_{k}$ is canonically identified with a partition of  $\Gamma_{n}$ by relating paths that pass through the same collection of  $k^{th}$ generation vertices.   This consistency property means that $(L_{r}^{b,s})_{r\geq 0}$ can be used to construct random measures on the set of directed paths, $\Gamma_{\infty}$, on the continuum diamond lattice $D_{\infty}$, which we will not discuss further in this article.   Given $a\in E_{n}$ and $p\in \Gamma_{n}$, we will write $a\Try p$ to mean that the path $p$ passes through the edge $a$.

\begin{corollary}\label{CorCon}  Pick $x>0$.  Let $\mu^{b,s}_{k}$ denote a random probability  measure on $\Gamma_{k}$ such that for an independent family of random variables $\{ X( a)\}_{a\in E_{k}} $  with distribution $L_{x(b/s)^{k}}^{b,s}$ a path $p\in \Gamma_{k}$ is assigned probability
$$\mu^{b,s}_{k}(p)\,  =  \, \frac{ \prod_{a\Try p} X( a)     }{ \sum_{q\in \Gamma_{k} } \prod_{a\Try q} X( a)      } \,, $$
where the products are over the set of edges, $a$, lying along the path $p\in \Gamma_{k}$.   For $n>k$, let $S_{p}$ be the set of paths on $D_{n}$ that pass through the same set of $k^{th}$ generation vertices as $p$.  Then the measures $\mu^{b,s}_{k}$ are consistent for all $k\in \mathbb{N}$ in the sense that the family of random variables $\{  \mu^{b,s}_{n}(S_{p}) \}_{p \in \Gamma_{k}}$ has the same law as $\{  \mu^{b,s}_{k}(p)  \}_{p \in \Gamma_{k}}$.

\end{corollary}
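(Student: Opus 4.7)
The plan is to reduce the construction of $\mu^{b,s}_n$, after grouping paths by the $k$th-generation vertices they visit, to a construction of the same form as $\mu^{b,s}_k$ but with edge weights built from the copies of $D_{n-k}$. First I would exploit the self-similar geometry: each edge $a\in E_k$ corresponds canonically to a copy of $D_{n-k}$ sitting inside $D_n$, and a path $p'\in\Gamma_n$ belongs to $S_p$ precisely when, for every edge $a$ traversed by $p$, the portion of $p'$ inside that $D_{n-k}$-copy is some root-to-root directed path in it. Hence $S_p$ is in bijection with $\prod_{a\in p}\Gamma_{n-k}$, and distributing the product yields
\begin{equation*}
\sum_{p'\in S_p}\prod_{a''\Try p'}X(a'')\;=\;\prod_{a\Try p}\widetilde{X}(a),\qquad \widetilde{X}(a):=\sum_{q\in\Gamma_{n-k}}\prod_{a''\Try q}X(a''),
\end{equation*}
where inside $\widetilde{X}(a)$ the sum runs over paths of the $D_{n-k}$-copy attached to $a$. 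Summing over $p\in\Gamma_k$ gives the analogous identity $\sum_{p'\in\Gamma_n}\prod X(a'')=\sum_{p\in\Gamma_k}\prod_{a\Try p}\widetilde{X}(a)$ for the denominator of $\mu^{b,s}_n$.

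Next I would identify the law of the $\widetilde{X}(a)$. The main lemma is that if iid weights $Y(a)\sim L^{b,s}_{r}$ are placed on the edges of a copy of $D_m$, then $|\Gamma_m|^{-1}\sum_{q\in\Gamma_m}\prod_{a\Try q}Y(a)\sim L^{b,s}_{(s/b)^m r}$. I would prove this by induction on $m$: the case $m=1$ is exactly property (II) of Theorem~\ref{ThmMainIII} (combined with $|\Gamma_1|=b$), and the inductive step comes from viewing $D_{m+1}$ as $D_1$ with each edge replaced by an independent copy of $D_m$, applying the inductive hypothesis inside each inner copy to produce an iid family distributed as $L^{b,s}_{(s/b)^m r}$, and invoking (II) once more at the outer level; the identity $|\Gamma_{m+1}|=b\cdot |\Gamma_m|^s$ makes the normalizations match. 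Specializing with $r=x(b/s)^n$ and $m=n-k$ gives $\widetilde{X}(a)/|\Gamma_{n-k}|\sim L^{b,s}_{x(b/s)^k}$; moreover, as $a$ ranges over $E_k$ these rescaled variables are mutually independent because they are functions of disjoint edge sets in $E_n$.

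Finally I would conclude by a cancellation of normalizations. Since every path in $\Gamma_k$ has exactly $s^k$ edges, the deterministic factor $|\Gamma_{n-k}|^{s^k}$ appears in both the numerator and the denominator of $\mu^{b,s}_n(S_p)$, giving
\begin{equation*}
\mu^{b,s}_n(S_p)\;=\;\frac{\prod_{a\Try p}\widetilde{Y}(a)}{\sum_{q\in\Gamma_k}\prod_{a\Try q}\widetilde{Y}(a)},\qquad \widetilde{Y}(a):=\widetilde{X}(a)/|\Gamma_{n-k}|,
\end{equation*}
which is precisely the construction of $\mu^{b,s}_k$ fed by the iid family $\{\widetilde{Y}(a)\}_{a\in E_k}$. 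Since this family has the same joint law as $\{X(a)\}_{a\in E_k}$ from the direct definition of $\mu^{b,s}_k$, the joint distributions $\{\mu^{b,s}_n(S_p)\}_{p\in\Gamma_k}$ and $\{\mu^{b,s}_k(p)\}_{p\in\Gamma_k}$ agree, which is the desired consistency. The only substantive step is the inductive distributional computation in the middle paragraph; the geometric decomposition and the cancellation of $|\Gamma_{n-k}|^{s^k}$ are routine once the subgraph labelling of Section~\ref{SecDef} is in hand.
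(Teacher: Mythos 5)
Your proposal is correct and follows essentially the same route as the paper: the paper also uses the self-similar identification $E_k\equiv G_{n-k,n}$ and iterates property (II) of Theorem~\ref{ThmMainIII} through the recursion $X(g)=\frac{1}{b}\sum_i\prod_j X(g\TS(i,j))$ (which is exactly your normalized sub-copy partition sum $\widetilde X(a)/|\Gamma_{n-k}|$) to see that the aggregated edge weights are iid with law $L^{b,s}_{x(b/s)^k}$, after which the ratio identity for $\mu^{b,s}_n(S_p)$ gives the consistency. The only difference is presentational: you make the induction on $m$ and the cancellation of the $|\Gamma_{n-k}|^{s^k}$ normalization explicit, whereas the paper absorbs both into the $\frac{1}{b}$-normalized recursion.
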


\begin{proof}
 For $g\in G_{0,n}\equiv E_{n}$, let $X(g)$ be independent\ random variables with distribution $ L_{x(b/s)^n}^{b,s}  $.  We can define independent random variables  $X(g)$ labeled by $g\in G_{n-k,n}\equiv E_{k}$ through inductive use of the recursive relation
\begin{align*}
X(g)\, =\,  \frac{1}{b}\sum_{i=1}^{b} \prod_{1\leq j\leq s}X\big( g\TS (i,j)\big)\,.
\end{align*}
The $X(g)$, $g\in  E_{k}$ have distribution $ L_{x(b/s)^k}^{b,s}  $ as a consequence of property (II) listed in Theorem~\ref{ThmMainIII}.   Notice that
\begin{align*}
\mu^{b,s}_{n}\big(S_{p}\big) \, \stackrel{d}{=} \, \frac{   \sum_{q\in S_{p}}\prod_{a\Try q} X( a)     }{ \sum_{q\in \Gamma_{n} } \prod_{a\Try q} X( a)      }\, =\,  \frac{   \prod_{g\Try p} X( g)     }{ \sum_{q\in \Gamma_{k} } \prod_{g\Try q} X( g)      }\, \stackrel{d}{=} \,\mu_{k}^{b,s}(p)\,.
\end{align*}
Thus the laws of the measures are consistent.  This equality in law generalizes to the families $\{  \mu^{b,s}_{n}(S_{p}) \}_{p \in \Gamma_{k}}$, $\{  \mu^{b,s}_{k}(p)  \}_{p \in \Gamma_{k}}$.

\end{proof}

\begin{definition}\label{DefBFW}
Given numbers $X_{g}\in \R$ labeled by $g\in E_{n}\equiv G_{0,n}$, we define

\begin{itemize}
\item $\mathbf{W}_{n}\big(\{X_{g}\}_{g\in E_{n}}\big)$  to be equal to $\widehat{W}_{n}( D_{n})$ for the array of real numbers $\widehat{W}_{n}(g)$, $g\in G_{k,n}$ determined by the recursive relation
\begin{align}\label{Horses}
\widehat{W}_{n}(g)\, =\,  \frac{1}{b}\sum_{i=1}^{b} \prod_{1\leq j\leq s}\widehat{W}_{n}\big(g\TS (i,j)\big)
\end{align}
with initial condition $\widehat{W}(g)=X_{g}$ for $g\in  G_{0,n}$.

\item $\mathbf{\overline{W}}_{n}\big(\{X_{g}\}_{g\in E_{n}}\big)$   analogously to the above through the linearized recursive relation
\begin{align}\label{Horses}
\widehat{W}_{n}(g)\, =\,1\,+\, \frac{1}{b}\sum_{i=1}^{b}\sum_{1\leq j\leq s}\Big( \widehat{W}_{n}\big(g\TS (i,j)\big)\,-\,1\Big)\,.
\end{align}

\end{itemize}

\end{definition}

\begin{remark}\label{Bub} If $ \{ X_{a}\}_{a\in E_{n}}$ is an array of real numbers  and $k<n$, then
$$\mathbf{W}_{n}\big(\{X_{a}\}_{a\in E_{n}}\big) \, =\, \mathbf{W}_{k}\Big( \big\{\mathbf{W}_{n-k}\big(  \{X_{a}\}_{a\Try g}\big) \big\}_{g\in E_{k}}   \Big)\,,  $$
where we have abused notation by using the one-to-one correspondence between $E_{k}$ and $G_{n-k,n}$ and that $G_{n-k,n}$ defines a partition of $E_{n}$  to classify each edge $a\in E_{n}$ as a member of an edge $g$ in $E_{k}$.

\end{remark}

\begin{remark}\label{Triv} Once we have proved Theorem \ref{ThmMainIII} we will have the following: if $ \{\mathbf{x}_{g}\}_{g\in E_{k}}$ is an array of independent random variables  having distribution $L_{x}^{b,s}$, then
$\mathbf{W}_{k}\big( \{ \mathbf{x}_{g}   \}_{g\in E_{k}}  \big) $ has law $L_{ x(s/b)^{k}  }^{b,s}$.

\end{remark}

We will work towards a proof of the following  limit theorem.

\begin{theorem}[Edge limit theorem]\label{ThmEdge} For $n\in \mathbb{N}$, let
$X_{g}^{(n)}$ be an array of positive i.i.d.\  random variables labeled by $g\in  E_{n}$ with mean $1$ and variance $x_{n}\in \R^{+}$.   In addition, we assume that
$$  \lim_{n\rightarrow \infty}(s/b)^{n}   x_{n}\,=\,x  \hspace{1cm}\text{and}\hspace{1cm}  \lim_{\lambda\nearrow \infty} \sup_{n\in \mathbb{N}} \, (sb )^{n} \mathbb{P}\Big[  \big|X_{g}^{(n)}-1 \big|^2  > \lambda (b/s  )^{n} \Big] \,=\,0\,    $$
for some $x>0$.    Then there is weak convergence as $n\rightarrow \infty$ given by
$$  \mathbf{W}_{n}\big( \{ X_{g}^{(n)}  \}_{g\in E_{n}} \big) \hspace{.7cm}  \stackrel{\mathcal{L}}{\Longrightarrow}    \hspace{.7cm}   L_{x}^{b,s}\, ,     $$
where the family of probability distributions $(L_{r}^{b,s})_{r\geq 0}$ satisfies the properties listed in Theorem~\ref{ThmMainIII}.

\end{theorem}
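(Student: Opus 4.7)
The plan is to identify the limit by a two-step reduction: first verify it for a convenient ``base'' input distribution where the identification is automatic, then use a Lindeberg-type swap, one edge at a time, to transfer the result to the general i.i.d.\ inputs satisfying the hypotheses.

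\medskip

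For the base case, replace each $X_g^{(n)}$ by $Y_g^{(n)}$ drawn i.i.d.\ from $L_{x(b/s)^n}^{b,s}$, which exists by Lemma~\ref{LemExist}. By property (II) of Theorem~\ref{ThmMainIII}, one application of the recursion~\eqref{Horses} to a batch of $L_r^{b,s}$ inputs produces an $L_{sr/b}^{b,s}$ output; iterating this identity $n$ times yields
\[
\mathbf{W}_n\bigl(\{Y_g^{(n)}\}_{g\in E_n}\bigr) \stackrel{d}{=} L_{(s/b)^n\cdot x(b/s)^n}^{b,s} = L_x^{b,s}
\]
\emph{exactly} for every $n$. Property (I) further gives $\textup{Var}(Y_g^{(n)}) = \mathfrak{v}_{b,s}(x(b/s)^n) \sim x(b/s)^n \sim x_n$, so the base variables $Y_g^{(n)}$ match the original variables $X_g^{(n)}$ in mean and asymptotic variance.

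\medskip

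The essential structural fact enabling the Lindeberg swap is that unfolding~\eqref{Horses} gives
\[
\mathbf{W}_n(\{X_g\}_{g\in E_n}) = \frac{1}{|\Gamma_n|}\sum_{p\in\Gamma_n}\prod_{a\in p}X_a,
\]
so $\mathbf{W}_n$ is multilinear in each individual edge variable: for each $g$ one has $\mathbf{W}_n = A_g + B_g X_g$ with $A_g, B_g$ independent of $X_g$. Swapping $X_g^{(n)}$ for $Y_g^{(n)}$ one edge at a time and Taylor-expanding a smooth bounded test function $\phi(A_g + B_g\,\cdot)$ around $1$, the linear term vanishes by mean matching; the quadratic term cancels up to an $o(x_n)$ discrepancy in variance whose total contribution is $o(1)$; and the surviving cubic remainder per swap is bounded by
\[
\tfrac{1}{6}\|\phi'''\|_\infty\cdot \mathbb{E}[|B_g|^3]\bigl(\mathbb{E}|X_g^{(n)}-1|^3 + \mathbb{E}|Y_g^{(n)}-1|^3\bigr).
\]
By the diamond-lattice symmetry each edge lies on $|\Gamma_n|/b^n$ paths, giving $\mathbb{E}[B_g]=b^{-n}$, and a recursive second-moment argument in the spirit of Corollary~\ref{Sig} yields $\mathbb{E}[B_g^2]=O(b^{-2n})$, propagating (after the truncation below) to $\mathbb{E}[|B_g|^3]=O(b^{-3n})$. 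The Lindeberg hypothesis permits truncating $X_g^{(n)}$ at $|X_g^{(n)}-1|\leq \lambda(b/s)^{n/2}$ with total cost vanishing as $\lambda\to\infty$, after which $\mathbb{E}|X_g^{(n)}-1|^3 = O(\lambda(b/s)^{3n/2})$, and an analogous bound holds for $Y_g^{(n)}$ since $L_{x(b/s)^n}^{b,s}$ is approximately Gaussian of variance $\sim x(b/s)^n$ by property (III). Summing the cubic-remainder bound over all $(bs)^n$ edges yields a total swap error of
\[
O\bigl(\lambda\cdot(bs)^n\cdot b^{-3n}\cdot (b/s)^{3n/2}\bigr) = O\bigl(\lambda(bs)^{-n/2}\bigr)\;\to\;0,
\]
so $\mathbb{E}[\phi(\mathbf{W}_n(\{X_g^{(n)}\}))]\to \mathbb{E}[\phi(L_x^{b,s})]$ for all smooth bounded $\phi$, giving the claimed weak convergence.

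\medskip

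The main obstacle is to secure the moment bounds $\mathbb{E}[B_g^2]=O(b^{-2n})$ and $\mathbb{E}[|B_g|^3]=O(b^{-3n})$ \emph{uniformly} throughout the swapping process, because after partial swaps $B_g$ depends on a mixture of $X$- and $Y$-variables. The saving grace is that $B_g$ factorizes into partition-function-like pieces for the sublattices ``above'' and ``below'' the edge $g$, each of which is a $\mathbf{W}_k$-type object for some $k<n$ built from small-variance inputs; Corollary~\ref{Sig} and a third-moment analogue then furnish the needed bounds uniformly in the mixture. Establishing this uniform moment control is the technically heaviest step; once it is in hand, the telescoping Lindeberg estimate above immediately gives the convergence $\mathbf{W}_n(\{X_g^{(n)}\}) \Rightarrow L_x^{b,s}$.
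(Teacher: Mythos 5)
Your plan takes a genuinely different route from the paper. You propose a Lindeberg swap: exploit the affine dependence $\mathbf{W}_n = A_g + B_gX_g$ on each individual edge variable, replace $X_g^{(n)}$ by $Y_g^{(n)}\sim L_{x(b/s)^n}^{b,s}$ one edge at a time (so that the target distribution $L_x^{b,s}$ is \emph{exactly} attained on the fully swapped array via property (II)), and Taylor-control the per-swap error to third order. The paper instead takes a renormalization/block-aggregation approach: it fixes a coarse level $n$ with $x(b/s)^n<\epsilon$, groups the $(bs)^{N-n}$ variables below each coarse edge $g$ into a single super-variable $\mathbf{\overline{W}}_{N-n}$, applies the classical Lindeberg--Feller CLT to these linearized block sums (which is what the moment/truncation hypothesis is set up to feed directly into), bounds the variance cost of replacing the nonlinear $\mathbf{W}_{N-n}$ by the linear $\mathbf{\overline{W}}_{N-n}$ by $O(\epsilon^2)$ via Lemma~\ref{LemBFW}, and identifies the limit through the Gaussian coupling of Lemma~\ref{LemExist}. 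Both are sound, but they trade labor in different places. The paper's route only needs uniform \emph{second}-moment control (Lemma~\ref{LemMMap}, Corollary~\ref{LemVarFun}) plus one application of the classical CLT on sums of i.i.d.\ variables; it never has to handle arrays whose entries are a mixture of two distributions. Your route buys a conceptually cleaner ``invariance principle'' structure and sidesteps the explicit Gaussian coupling, but at the cost of exactly the obstacle you name: establishing $\mathbb{E}[B_g^3]=O(b^{-3n})$ \emph{uniformly across the interpolation chain}, which requires a third-moment analogue of the variance recursion that tolerates non-identically-distributed inputs (a mixture of $X$'s and $Y$'s), together with truncation and re-centering to make the third moments of the $X_g^{(n)}$ finite under only the stated Lindeberg-type tail hypothesis. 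That program is plausible --- the factorization of $B_g$ into a nested product of normalized sublattice partition functions together with a monotonicity argument on the moment recursions should deliver the uniform bounds, and the arithmetic $\lambda(bs)^{n}\cdot b^{-3n}\cdot(b/s)^{3n/2}=\lambda(bs)^{-n/2}\to 0$ checks out --- but it is substantial extra work compared to the paper's block argument and is only sketched in your proposal. Also note that the paper explicitly advertises avoiding chaos-expansion / invariance-principle methods in favor of the recursive structure, so your approach is closer in spirit to the alternative methodology the authors contrast themselves with.
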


\begin{definition}
Define $\widehat{M}:\R^+\rightarrow \R^+$ by
$$ \widehat{M}(x)\, :=\, \frac{1}{b}\big[ (1+x)^{s}\,-\,1     \big]  \,.    $$

\end{definition}

\begin{remark}\label{LilRemark}
 Let  $X_{g}$ be an array of positive i.i.d.\ random variables labeled by $g\in E_{k}$ having mean $1$ and variance $ x\in \R^{+}$.
The $k$-fold composition of the map $\widehat{M}$ yields
$$ \textup{Var}\Big( \mathbf{W}_{k}\big( \{ X_{g}  \}_{g\in E_{k}} \big) \Big) \, = \,   \widehat{M}^{k}(x)   \, .    $$
We will also make repeated use of the simple formula
\begin{align*}
\textup{Var} \Big( \mathbf{\overline{W}}_{k}\big(\{X_{g}\}_{g\in E_{k}}\big) \Big) = \left( \frac{s}{b} \right)^k x.
\end{align*}
\end{remark}

Next we collect some estimates on $\widehat{M}$.  The proof of Lemma~\ref{LemMMap} is placed in Section~\ref{SecProofsOne}.

\begin{lemma}\label{LemMMap} For any $\lambda>0$,  there is a $C>0$ such that the following inequalities hold for all $0\leq  x\leq \lambda$,   $N\in \mathbb{N}$, and $N\geq n$:

\begin{enumerate}[i)]

\item  $\widehat{M}^{N-n}\big(x(b/s)^{N}   \big) \,\leq \,  Cx  ( b/s)^{n}  $,

\item  $\widehat{M}^{N-n}\big(x(b/s)^{N}   \big) \, - \, x  ( b/s)^{n}  \, \leq \,   C x^2  ( b/s)^{2n} $, and

\item   $    \frac{d}{dx} \Big[ \widehat{M}^{n}\big( x(  b/s)^{n}\big) \Big] \, \leq \, C   $.

\item For unrestricted $x\geq 0$, we still  have a bound of the form
$$  \frac{d}{dx} \Big[ \widehat{M}^{n}\big( x(  b/s)^{n}\big) \Big]\, \leq \frac{s}{b}\exp\bigg\{ \frac{s-1}{1-\frac{b}{s}} \widehat{M}^{n}\big( x(  b/s)^{n}\big) \bigg\}\,. $$

\end{enumerate}

\end{lemma}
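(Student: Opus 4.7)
The plan is to reduce everything to the Taylor expansion
\begin{equation*}
\widehat{M}(y) \;=\; \tfrac{s}{b} y \,+\, R(y), \qquad R(y) \;:=\; \tfrac{1}{b}\bigl[(1+y)^s - 1 - sy\bigr],
\end{equation*}
for which $0 \leq R(y) \leq c_\lambda y^2$ on any interval $[0,\lambda]$ (the lower bound being Bernoulli's inequality $(1+y)^s \geq 1 + sy$), together with the derivative formula $\widehat{M}'(y) = (s/b)(1+y)^{s-1}$ and the one-sided estimate $\widehat{M}(y) \geq (s/b)y$ on $\mathbb{R}^+$ (equivalent to $R\geq 0$).

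For parts (i) and (ii), I would set $y_k := \widehat{M}^k(x(b/s)^N)$ and $u_k := x(b/s)^{N-k}$, so that $u_k$ is the pure linearized trajectory and $u_{N-n} = x(b/s)^n$. The ratio $z_k := y_k/u_k$ starts at $z_0 = 1$ and satisfies
\begin{equation*}
z_{k+1} - z_k \;=\; \tfrac{b}{s}\frac{R(y_k)}{u_k}.
\end{equation*}
The strategy is a bootstrap: under the hypothesis $z_k \leq M$ (for a constant $M = M(\lambda,b,s)$ to be chosen), we have $y_k \leq M\lambda$, so $R(y_k) \leq c_{M\lambda} y_k^2$ and $z_{k+1}-z_k \leq \tfrac{b}{s}c_{M\lambda}M^2 u_k$. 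Telescoping yields
\begin{equation*}
z_{N-n} - 1 \;\leq\; c_{M\lambda} M^2 x \sum_{k=0}^{N-n-1}(b/s)^{N-k+1} \;\leq\; \frac{c_{M\lambda}M^2\lambda(b/s)}{1-b/s},
\end{equation*}
and choosing $M$ appropriately closes the bootstrap, proving (i). Multiplying the same displayed inequality by $u_{N-n} = x(b/s)^n$ proves (ii).

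For parts (iii) and (iv), the chain rule applied to $\widehat{M}^n(x(b/s)^n)$ gives
\begin{equation*}
\frac{d}{dx}\widehat{M}^n(x(b/s)^n) \;=\; (b/s)^n \prod_{k=0}^{n-1}\widehat{M}'(Y_k) \;=\; \prod_{k=0}^{n-1}(1+Y_k)^{s-1},
\end{equation*}
where $Y_k := \widehat{M}^k(x(b/s)^n)$ and the prefactor $(b/s)^n$ cancels the $n$ factors of $s/b$ in $\widehat{M}'$. Taking logarithms and applying $\log(1+y)\leq y$ reduces matters to estimating $\sum_{k=0}^{n-1} Y_k$. For (iii), with $x \in [0,\lambda]$, part (i) yields $Y_k \leq C x(b/s)^{n-k}$ and hence $\sum Y_k \leq C x(b/s)/(1-b/s)$, a uniform bound. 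For (iv), where $x$ is unrestricted, I would instead use the Bernoulli lower bound $\widehat{M}(y) \geq (s/b)y$ iterated forward: it gives $Y_n \geq (s/b)^{n-k}Y_k$, i.e.\ $Y_k \leq (b/s)^{n-k} Y_n$, so summing the geometric series yields $\sum Y_k \leq \tfrac{b/s}{1-b/s} Y_n$, and exponentiating produces the claimed bound (the extra factor of $s/b$ in the lemma's prefactor is harmless slack).

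The main obstacle is closing the bootstrap in (i). The map $\widehat{M}$ is strongly non-contractive for $y$ away from zero, so a priori the trajectory $y_k$ could escape the regime where the quadratic remainder $R(y) \lesssim y^2$ is an effective bound. What saves the argument is the geometric decay $u_k \propto (b/s)^{N-k}$ built in by the assumption $b<s$: it forces the correction terms $z_{k+1}-z_k$ to form a convergent geometric series in $N-k$, uniformly in $N$ and $n$. Once (i) is established, parts (ii)--(iv) are essentially calculus applied to the chain rule and the same two inequalities for $\widehat{M}$.
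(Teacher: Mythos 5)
Your approach is essentially the same as the paper's: both proofs compare the nonlinear iteration of $\widehat{M}$ to the linear trajectory $u_k = x(b/s)^{N-k}$ and show by induction that the two stay close because of the geometric decay of $u_k$. You use the additive Taylor decomposition $\widehat{M}(y)=(s/b)y+R(y)$ with $R(y)\lesssim y^2$ and a bootstrap on the ratio $z_k=y_k/u_k$; the paper instead uses the multiplicative envelope $(s/b)y<\widehat{M}(y)\leq(s/b)ye^{cy}$ and a stopping-time induction. Your bookkeeping is a bit cleaner, and your estimate for (iv) is in fact a touch sharper than the stated bound (you produce the exponent $\tfrac{(s-1)(b/s)}{1-b/s}\widehat{M}^n(x(b/s)^n)$ with no $s/b$ prefactor). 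For (ii), note that you must multiply through the \emph{middle} expression $c_{M\lambda}M^2x\sum_{k}(b/s)^{N-k+1}\lesssim x(b/s)^{n+2}$ rather than the cruder right-hand bound $\lesssim\lambda(b/s)$, since only the former yields the claimed $x^2(b/s)^{2n}$; this is a slip of presentation, not of substance.

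The genuine gap is that your bootstrap does not close for all $\lambda>0$. You need an $M$ with $1+\tfrac{c_{M\lambda}M^2\lambda(b/s)}{1-b/s}\leq M$, but $c_{M\lambda}$ (the constant in $R(y)\leq c_{M\lambda}y^2$ on $[0,M\lambda]$) grows like $(M\lambda)^{s-2}$, so the requirement becomes $M^{s}\lambda^{s-1}\lesssim M-1$, which fails for $\lambda$ large. Already in the cleanest case $s=2$, where $R(y)=y^2/b$ exactly, the condition is a quadratic in $M$ with no real solution once $\lambda>(s-b)/4$. So your argument as written only establishes the lemma for $\lambda$ below a threshold, while the statement asserts it for every $\lambda>0$. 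The paper runs into exactly the same obstruction and disposes of it with the sentence ``It is sufficient to show that the inequalities hold for $\lambda>0$ small enough'' before imposing a smallness condition on $\lambda$. You need the same reduction, and it is worth making explicit: fix a small $\lambda_0$ for which your bootstrap closes, choose $m$ with $\lambda(b/s)^m\leq\lambda_0$, and observe that for $n\geq m$ one can write $x(b/s)^N=(x(b/s)^m)(b/s)^{N-m}$ with $x(b/s)^m\leq\lambda_0$ and invoke the small-$\lambda_0$ case, while for $n<m$ one first applies the small-$\lambda_0$ case down to generation $m$ (if $N\geq m$) and then pays a bounded multiplicative constant for the at most $m-n\leq m$ remaining applications of $\widehat{M}$ on a bounded domain (the case $N<m$ being handled directly, since then $N-n<m$ is bounded). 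Once this reduction is flagged, the rest of your argument is sound.
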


\begin{corollary}[Limiting variance function]\label{LemVarFun}

There is an increasing function $\frak{v}_{b,s}:\R^{+}\rightarrow \R^{+}$ such that for all $x\in \R^{+}$
\begin{enumerate}[I)]
\item  As $n\rightarrow \infty$,
$$\widehat{M}^{n}\big( x (b/s)^{n}   \big) \hspace{.5cm} \nearrow \hspace{.5cm} \frak{v}_{b,s}(x)\, .$$

\item $\frak{v}_{b,s}$  satisfies the relation
\begin{align*}
\frak{v}_{b,s}\big(\frac{s}{b}x\big)    \, = \,  \widehat{M}\big(  \frak{v}_{s,b}(x)  \big) \,.
\end{align*}

\item The derivative  of $\frak{v}_{b,s}$  satisfies $\lim_{x\searrow 0}\frak{v}_{b,s}'(x)=1$ and
\begin{align*}
  \frak{v}_{b,s}'(x)   \, =   \, & \prod_{j=1}^{\infty}  \Big(1+  \frak{v}_{b,s}\big(x(b/s)^{j}    \big)  \Big)^{s-1} \,  =\, \prod_{j=1}^{\infty}  \Big(1+  \widehat{M}^{-j}\big(\frak{v}_{b,s}(x) \big)  \Big)^{s-1}    \,.
\end{align*}

\end{enumerate}

\end{corollary}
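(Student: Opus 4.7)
My plan is to treat the three parts in sequence, using the monotonicity of $\widehat{M}$ on $\R^{+}$ together with the uniform estimates of Lemma~\ref{LemMMap}.

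For (I), the starting point is Bernoulli's inequality $(1+y)^{s}\geq 1+sy$, which gives $\widehat{M}(y)\geq (s/b)y$ and hence $\widehat{M}(x(b/s)^{n+1})\geq x(b/s)^{n}$. Applying the monotone increasing map $\widehat{M}^{n}$ yields
\begin{equation*}
\widehat{M}^{n+1}\!\bigl(x(b/s)^{n+1}\bigr)\;=\;\widehat{M}^{n}\!\Bigl(\widehat{M}(x(b/s)^{n+1})\Bigr)\;\geq\;\widehat{M}^{n}\!\bigl(x(b/s)^{n}\bigr),
\end{equation*}
so $f_{n}(x):=\widehat{M}^{n}(x(b/s)^{n})$ is non-decreasing in $n$. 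Taking $n=0$ in Lemma~\ref{LemMMap}(i) gives $f_n(x)\leq Cx$ uniformly on each compact $[0,\lambda]$, so the limit $\frak{v}_{b,s}(x):=\lim_n f_n(x)$ exists. Part (II) then follows by writing
\begin{equation*}
\widehat{M}^{n}\!\bigl(\tfrac{s}{b}x\cdot(b/s)^{n}\bigr)\;=\;\widehat{M}^{n}\!\bigl(x(b/s)^{n-1}\bigr)\;=\;\widehat{M}\!\Bigl(\widehat{M}^{n-1}\!\bigl(x(b/s)^{n-1}\bigr)\Bigr)
\end{equation*}
and passing $n\to\infty$ using continuity of $\widehat{M}$ and part (I).

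The substantive content lies in (III). A chain-rule computation using $\widehat{M}'(y)=(s/b)(1+y)^{s-1}$ produces the finite product representation
\begin{equation*}
f_{n}'(x)\;=\;\prod_{k=1}^{n}\bigl(1+\widehat{M}^{n-k}(x(b/s)^{n})\bigr)^{s-1}.
\end{equation*}
Factoring $x(b/s)^{n}=(x(b/s)^{k})(b/s)^{n-k}$ and invoking (I) shows that for each fixed $k$ the $k$-th factor is monotone increasing in $n$, with limit $(1+\frak{v}_{b,s}(x(b/s)^{k}))^{s-1}$. A monotone convergence argument applied to $\log f_n'$ as a double-indexed sum of non-negative terms then gives $f_n'(x)\nearrow P_\infty(x):=\prod_{j\geq 1}(1+\frak{v}_{b,s}(x(b/s)^{j}))^{s-1}$; finiteness of $P_\infty$ is guaranteed by $\frak{v}_{b,s}(y)\leq Cy$ inherited from Lemma~\ref{LemMMap}(i).

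To complete (III), I would identify $\frak{v}_{b,s}'$ with $P_\infty$ by integrating $f_n'$ over intervals $[x,x+h]$ and passing to the limit inside the integral using the uniform derivative bound of Lemma~\ref{LemMMap}(iii); this is the step I expect to be the most delicate, since upgrading pointwise monotone convergence of $f_n$ to the derivative identity requires care. The alternate representation of $\frak{v}_{b,s}'$ is then immediate from iterating (II) as $\frak{v}_{b,s}(x(b/s)^{j})=\widehat{M}^{-j}(\frak{v}_{b,s}(x))$, and $\lim_{x\searrow 0}\frak{v}_{b,s}'(x)=1$ comes from the estimate $\log P_\infty(x)\leq (s-1)\sum_{j\geq 1}\frak{v}_{b,s}(x(b/s)^{j})\leq C'x\to 0$.
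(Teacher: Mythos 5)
Your argument is correct, and for part~(I) it takes a genuinely different route than the paper. The paper shows the sequence $\widehat{M}^{n}(x(b/s)^{n})$ is Cauchy by telescoping: it writes the difference $\widehat{M}^{N}(x(b/s)^{N})-\widehat{M}^{n}(x(b/s)^{n})$ as $\widehat{M}^{n}\bigl(\widehat{M}^{N-n}(x(b/s)^{N})\bigr)-\widehat{M}^{n}(x(b/s)^{n})$, controls the inner gap via Lemma~\ref{LemMMap}(ii) and the outer derivative via Lemma~\ref{LemMMap}(i),(iii), obtaining the quantitative rate $C^{2}x^{2}(b/s)^{2n}$. You instead prove monotonicity outright from Bernoulli's inequality, giving $\widehat{M}(y)\geq(s/b)y$, and combine with the uniform bound of Lemma~\ref{LemMMap}(i); this is a bit cleaner in that it justifies the $\nearrow$ in the statement explicitly, at the cost of not giving a convergence rate. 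For parts~(II) and~(III) the paper merely remarks that they ``follow from the existence of the limit~(I),'' so your chain-rule product identity, the monotone-convergence passage to the infinite product, and the integrate-then-differentiate step to upgrade $f_n'\to P_\infty$ to $\frak{v}_{b,s}'=P_\infty$ are precisely the details the paper leaves implicit. That last step is sound as sketched: $f_n(x+h)-f_n(x)=\int_x^{x+h}f_n'$, uniform boundedness of $f_n'$ on compacts by Lemma~\ref{LemMMap}(iii), and continuity of $P_\infty$ (from the geometric decay $\frak{v}_{b,s}(x(b/s)^{j})\leq Cx(b/s)^{j}$) do the job.
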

\begin{proof} Properties (II) and (III) follow from the existence of the limit (I).   To see that the sequence $\widehat{M}^{n}\big( x (b/s)^{n}   \big)$ is Cauchy, notice that
for $N>n\gg 1$
\begin{align*}
\widehat{M}^{N}\big( x (b/s)^{N}   \big)  \,-\,  \widehat{M}^{n}\big( x (b/s)^{n}   \big)\, =& \, \widehat{M}^{n}\Big(\widehat{M}^{N-n}\big( x (b/s)^{N}\big)   \Big)  \,-\, \widehat{M}^{n}\big( x (b/s)^{n}   \big)\\
\leq & \, \Big[    \widehat{M}^{N-n}\big( x (b/s)^{N}\big)   \,-\, x (b/s)^{n} \Big] \frac{d}{dy}  \widehat{M}^{n}(y )\Big|_{y=\widehat{M}^{N-n}( x (b/s)^{N}) }\\
\leq & \, C x^{2}  \big( b/s\big)^{2n} \frac{d}{dy}  \widehat{M}^{n}(y )\Big|_{y=c (b/s)^{n} } \\
\leq & \, C^{2}x^2 \big( b/s\big)^{2n}\,.
\end{align*}
In the first inequality above, we have used that the derivative of $\widehat{M}$ is an increasing function.  The second inequality uses (ii) and (i)  of Lemma~\ref{LemMMap} for the first and second factors, respectively.   The third inequality uses (iii) of the same lemma.

 \vspace{.3cm}

\end{proof}

\begin{lemma}\label{LemBFW}
For each $n\in \mathbb{N}$, let $Y_{g}^{(n)}$ and $Z_{g}^{(n)}$ be arrays of i.i.d.\ random variables labeled by $g\in E_{n}$ such that $Y_{g}^{(n)}$ and $Z_{g}^{(n)}$ are uncorrelated, $\mathbb{E}\big[Y_{g}^{(n)}\big]=1$, $\mathbb{E}\big[Z_{g}^{(n)}\big]=0$, and for large $n>0$
$$\overline{y}\,:=\,  \sup_{n\in \mathbb{N} } ( s/b )^{n}\textup{Var}\big(Y_{g}^{(n)}  \big) \,  <  \, \infty\,\hspace{.7cm}\text{and}\hspace{.7cm}  \textup{Var}\big(Z_{g}^{(n)}  \big)  \, = \, \mathit{o}\big( (b/s)^{n}\big) \,. $$
 Then, the following holds:
\begin{enumerate}[i)]
\item  There is convergence to zero in probability as $n\rightarrow \infty$ given by
$$ \left|  \mathbf{W}_{n}\big(\{Y_{g}^{(n)}+Z_{g}^{(n)}\}_{g\in E_{n}}\big)\,-\,\mathbf{W}_{n}\big(\{Y_{g}^{(n)}\}_{g\in E_{n}}\big)    \right|   \hspace{.5cm}\stackrel{\mathcal{P}}{\Longrightarrow} \hspace{.5cm} 0\, .    $$

\item  For $g\in E_{n}$ and  $N>n $, the random variables  $\mathbf{W}_{N-n}\big(\{Y_{a}^{(N)}\}_{a\Try g}\big) -\mathbf{\overline{W}}_{N-n}\big(\{Y_{a}^{(N)}\}_{a\Try g }\big)$ and $\mathbf{\overline{W}}_{N-n}\big(\{Y_{a}^{(N)}\}_{a\Try g }\big)$ are uncorrelated, and there is a $C>0$ such that for all $N,n\in \mathbb{N}$ and $Y_{a}^{(N)}$
$$ \textup{Var}\Big(  \mathbf{W}_{N-n}\big(\{Y_{g}^{(N)}\}_{a\Try g}\big)  \, - \,    \mathbf{\overline{W}}_{N-n}\big(\{Y_{g}^{(N)}\}_{a \Try g}\big)\Big)  \, \leq  \, C\overline{y}^2( b/s )^{2n}   \,  .    $$

\end{enumerate}

\end{lemma}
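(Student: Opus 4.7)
The plan has two strands. The first is an orthogonality observation that yields (ii) at once; the second is a single-coordinate telescoping argument that, powered by the same chaos calculus and by Lemma~\ref{LemMMap}, gives (i).

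Fix any i.i.d.\ mean-one array $(X_a)_{a\in E_n}$. Because $\mathbf{W}_n$ is built from the recursion in Definition~\ref{DefBFW} and each leaf variable $X_a$ appears in exactly one factor of every nested product, $\mathbf{W}_n$ is multi-affine in each $X_a$. Writing $\xi_a:=X_a-1$, one obtains a chaos decomposition
\begin{align*}
\mathbf{W}_n\big(\{X_a\}\big)-1 \,=\, \sum_{\emptyset\neq T\subseteq E_n} d_T\prod_{a\in T}\xi_a
\end{align*}
with deterministic coefficients $d_T$ independent of the distribution of $X$. A short induction on the recursion identifies $d_{\{a\}}=b^{-n}$, so the first-order part equals $\mathbf{\overline{W}}_n-1$, and $\mathbf{W}_n-\mathbf{\overline{W}}_n$ is supported on chaoses of order $\geq 2$. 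Since distinct-order chaoses in i.i.d.\ mean-zero variables are $L^2$-orthogonal, $\mathbf{W}_{N-n}-\mathbf{\overline{W}}_{N-n}$ and $\mathbf{\overline{W}}_{N-n}$ are uncorrelated. Remark~\ref{LilRemark} then gives
\begin{align*}
\textup{Var}\big(\mathbf{W}_{N-n}-\mathbf{\overline{W}}_{N-n}\big) \,=\, \widehat{M}^{N-n}(x_N) - (s/b)^{N-n}x_N\,,\qquad x_N:=\textup{Var}\big(Y_a^{(N)}\big)\,,
\end{align*}
and since $x_N\leq\overline{y}(b/s)^N$, Lemma~\ref{LemMMap}(ii) with $x=\overline{y}$ yields the required bound $\leq C\overline{y}^2(b/s)^{2n}$, settling (ii).

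For (i), order $E_n$ as $a_1,\ldots,a_N$ and set $X_a^{(k)}:=Y_a+Z_a$ on $\{a_1,\ldots,a_k\}$ and $X_a^{(k)}:=Y_a$ elsewhere. Multi-affinity of $\mathbf{W}_n$ in the $a_k$ coordinate produces a coefficient $B_{a_k}^{(k)}$ depending only on $X^{(k)}_{\setminus a_k}$, with
\begin{align*}
D_n\,:=\,\mathbf{W}_n\big(\{Y+Z\}\big)-\mathbf{W}_n\big(\{Y\}\big) \,=\, \sum_{k=1}^{N} Z_{a_k}\,B_{a_k}^{(k)}\,.
\end{align*}
The central calculation shows the cross moments $\mathbb{E}[Z_{a_k}B_{a_k}^{(k)}Z_{a_l}B_{a_l}^{(l)}]$ vanish for $k\neq l$: for $k<l$, multi-affinity lets us write $B_{a_l}^{(l)}=\alpha+\beta(Y_{a_k}+Z_{a_k})$ and $B_{a_k}^{(k)}=\gamma+\delta Y_{a_l}$ with $\alpha,\beta,\gamma,\delta$ independent of $(Y_{a_k},Z_{a_k},Y_{a_l},Z_{a_l})$, and every resulting product contains a factor $\mathbb{E}[Z]=0$ or $\mathbb{E}[YZ]=0$. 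Since $B_{a_k}^{(k)}$ is independent of $(Y_{a_k},Z_{a_k})$, the variance reduces to $\textup{Var}(D_n)=\textup{Var}(Z_a^{(n)})\sum_{k=1}^N \mathbb{E}\big[(B_{a_k}^{(k)})^2\big]$. Chaos-expanding each $B_{a_k}^{(k)}$ and bounding every input variance by $v^\ast:=\textup{Var}(Y_a^{(n)})+\textup{Var}(Z_a^{(n)})$ gives
\begin{align*}
\sum_{k=1}^{N}\mathbb{E}\big[(B_{a_k}^{(k)})^2\big] \,\leq\, \sum_{T}|T|\,d_T^2(v^\ast)^{|T|-1} \,=\, (\widehat{M}^n)'(v^\ast)\,.
\end{align*}
Since $v^\ast=(\overline{y}+o(1))(b/s)^n$, the chain rule applied to Lemma~\ref{LemMMap}(iii) forces $(\widehat{M}^n)'(v^\ast)\leq C(s/b)^n$, and together with $\textup{Var}(Z_a^{(n)})=\mathit{o}((b/s)^n)$ we conclude $\textup{Var}(D_n)=\mathit{o}(1)$, so $D_n\to 0$ in $L^2$, hence in probability. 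The principal obstacle is the cross-term vanishing, where uncorrelatedness of $Y$ and $Z$ (weaker than independence) must be leveraged through the multi-affine structure so that only pairwise second moments of $(Y,Z)$ are needed.
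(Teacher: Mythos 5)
Your proposal is correct, and it is worth noting where it tracks the paper and where it departs. For part (ii) you are doing essentially what the paper does: the bound is exactly $\widehat{M}^{N-n}\big(y_N(b/s)^N\big)-y_N(b/s)^n$ controlled by Lemma~\ref{LemMMap}(ii); your added value is that you actually justify the uncorrelatedness of $\mathbf{W}_{N-n}-\mathbf{\overline{W}}_{N-n}$ and $\mathbf{\overline{W}}_{N-n}$ via the chaos decomposition (identifying $d_{\{a\}}=b^{-n}$ so that $\mathbf{\overline{W}}-1$ is precisely the first-order chaos), whereas the paper simply asserts it ``follows from the definition.'' For part (i) your route is genuinely different in mechanism: the paper computes $\mathbb{E}\big[|\mathbf{W}_n(\{Y+Z\})-\mathbf{W}_n(\{Y\})|^2\big]$ as a difference of variances, $\widehat{M}^n\big(x_n(b/s)^n\big)-\widehat{M}^n\big(y_n(b/s)^n\big)$, and then bounds it by the derivative at the larger argument (using that $\widehat{M}$ has increasing derivative) together with Lemma~\ref{LemMMap}(i),(iii); you instead run a Lindeberg-type single-coordinate replacement, exploit multi-affinity to write the difference as $\sum_k Z_{a_k}B^{(k)}_{a_k}$, kill the cross terms using only $\mathbb{E}[Z]=0$, $\mathbb{E}[YZ]=0$ and independence of the pairs across edges, and identify $\sum_T|T|d_T^2 x^{|T|-1}$ with $(\widehat{M}^n)'(x)$ via Remark~\ref{LilRemark}. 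Interestingly, both arguments land on the same final quantity, $\textup{Var}(Z)\cdot(\widehat{M}^n)'(v^\ast)$ with $v^\ast=\textup{Var}(Y+Z)$, so nothing is lost quantitatively; what your version buys is a self-contained verification of the orthogonality/uncorrelatedness steps the paper leaves implicit, and it makes transparent that only within-pair uncorrelatedness (not independence) of $Y_g$ and $Z_g$ is needed, which is exactly the hypothesis under which the lemma is later applied. Two cosmetic points: the displayed identity $v^\ast=(\overline{y}+o(1))(b/s)^n$ should be an inequality $v^\ast\leq(\overline{y}+o(1))(b/s)^n$ (the supremum defining $\overline{y}$ need not be attained), which is all you use; and in (ii) the application of Lemma~\ref{LemMMap}(ii) is at the point $y_N=(s/b)^N\textup{Var}(Y^{(N)}_a)\leq\overline{y}$ with $\lambda=\overline{y}$, rather than literally at $x=\overline{y}$ — the conclusion is unchanged.
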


\begin{proof}Part (i): Define $y_{n}:= (s/b)^{n}\textup{Var}\big( Y_{g}^{(n)} \big)      $  and $x_{n}:=  (s/b)^{n}\textup{Var}\big( Y_{g}^{(n)}+Z_{g}^{(n)}   \big)  $.   The random variables  $\mathbf{W}_{n}\big(\{Y_{g}^{(n)} +Z_{g}^{(n)} \}_{g\in E_{n}}\big)-\mathbf{W}_{n}\big(\{Y_{g}^{(n)} \}_{g\in E_{n}}\big)$ and $\mathbf{W}_{n}\big(\{Y_{g}^{(n)} \}_{g\in E_{n}}\big)$ are uncorrelated and thus we have first equality below.
\begin{align*}
\mathbb{E}\bigg[  \Big|  \mathbf{W}_{n}\big(\{Y_{g}^{(n)} +Z_{g}^{(n)} \}_{g\in E_{n}}\big)\,&-\,\mathbf{W}_{n}\big(\{Y_{g}^{(n)} \}_{g\in E_{n}}\big)    \Big|^2    \bigg]\\  &\, = \ \textup{Var}  \Big( \mathbf{W}_{n}\big(\{Y_{g}^{(n)} +Z_{g}^{(n)} \}_{g\in E_{n}}\big)  \Big)\,-\,\textup{Var}\Big(  \mathbf{W}_{n}\big(\{Y_{g}^{(n)} \}_{g\in E_{n}}\big)       \Big)  \\
&\, = \, \widehat{M}^{n}\big( x_{n}(b/s )^n   \big)\,-\,\widehat{M}^{n}\big( y_{n}(b/s  )^n   \big)
\\ &\, \leq \,       \frac{d}{dx}\Big[ \widehat{M}^{n}\big( x(b/s  )^n  \big)        \Big]\Big|_{x=x_{n}}   \big( x_{n}\,-\,y_{n}   \big)    \, .
\end{align*}
The second equality follows from Remark~\ref{LilRemark} and the  inequality uses that  $\widehat{M}$ has increasing derivative.  By our assumptions, $x_{n}$ is bounded for all $n$ and $|x_{n}-y_{n}|\rightarrow 0$.  Hence, we can apply (i) and (iii) of Lemma~\ref{LemMMap} to show that the above converges to zero with large $n$. \vspace{.3cm}

\noindent Part (ii): That the stated variables are uncorrelated follows from the definition of $\mathbf{W}$ and $\mathbf{\overline{W}}$. Since they are uncorrelated we have
\begin{align*}
 \textup{Var}\Big(  \mathbf{W}_{N-n}\big(\{Y_{a}^{(N)} \}_{a\Try g}\big)  \, - \,   \mathbf{\overline{W}}_{N-n}&\big(\{Y_{a}^{(N)} \}_{a\Try g}\big)\Big)\\  \, = \, & \textup{Var}\Big(  \mathbf{W}_{N-n}\big(\{Y_{a}^{(N)}\}_{a\Try g}\big) \Big) \, - \,   \textup{Var}\Big( \mathbf{\overline{W}}_{N-n}\big(\{Y_{a}^{(N)} \}_{a\Try g}\big)\Big)\\  \, = \, & \widehat{M}^{N-n}\big(y_{N}(b/s)^{N}   \big) \, -\, y_{N}(b/s)^{n} \\
\leq \,& C y_{N}^2(b/s)^{2n} \,\leq \,   C \overline{y}^2(b/s)^{2n}     \, ,
\end{align*}
where the first inequality holds by part (ii) of Lemma~\ref{LemMMap}.

\end{proof}


The proof of Lemma~\ref{LemExist} is a warm-up for the proof of  Theorem~\ref{ThmMainIII}, and we will refer back to some of the constructions in the proof.

\begin{proof}[Proof of Lemma~\ref{LemExist}]
Fix $r>0$.  For $n\in \mathbb{N}$ let $\{ \widehat{\omega}_{g}\}_{g\in E_n}$ be arrays of independent random variables with distribution
 $\mathcal{N}\big(0,\,  r(b/s)^{n}   \big) $ and satisfying a hierarchical consistency relation:
$$\widehat{\omega}_{g} \, =\,\frac{1}{b}\sum_{i=1}^{b}\sum_{1\leq j\leq s} \widehat{\omega}_{g\times (i,j)}\, .     $$
The above can be easily constructed using, for instance, two independent standard Brownian motions $\mathbf{B}^{(1)}$ and $\mathbf{B}^{(2)}$ and the fact that $ E_n$ has a canonical one-to-one correspondence with $(\{1,\cdots, b \}\times\{1,\cdots,s  \})^{n}$ to identify  the $\widehat{\omega}_{g}$, $g\in E_n$ with variables of the form
$$ b^{n}\Big( \mathbf{B}^{(1)}_{\frac{i+1}{b^n} } \,-\,\mathbf{B}^{(1)}_{\frac{i}{b^n} }   \Big) \Big( \mathbf{B}^{(2)}_{\frac{j+1}{s^n} } \,-\,\mathbf{B}^{(2)}_{\frac{j}{s^n} }   \Big)\,,$$
where $0\leq i\leq b^{n}-1$ and $0\leq j\leq s^{n}-1$.   This construction has the useful property that for $N>n$ and $g\in G_{n,N}$,
\begin{align}\label{Narbitsch}
1\,+\,\widehat{\omega}_{g}\, =\,\mathbf{\overline{W}}_{N-n}\big(\{ 1\,+\,\widehat{\omega}_{a}  \}_{a\Try g }\big)\, .
\end{align}

We will show that there is convergence in probability
\begin{align}\label{Babadu}
\mathbf{W}_{n}\big( \{ 1+ \widehat{\omega}_{g}   \}_{g \in E_{n}}   \big)\hspace{.5cm} \stackrel{\mathcal{P}}{\Longrightarrow} \hspace{.5cm}\big(\textbf{Some limit}\big)\, .
\end{align}
 It is enough to show that $\mathbf{W}_{n}\big( \{  \widehat{\omega}_{g}   \}_{g \in E_{n}}   \big)$ is Cauchy in probability. Notice that for $N>n$ the equality~(\ref{Narbitsch}) implies the first equality below
\begin{align*}
\mathbf{W}_{N}\big( \{ 1+ \widehat{\omega}_{a}   \}_{a \in E_{N}}   \big)\,-\, &\mathbf{W}_{n}\big( \{  1+\widehat{\omega}_{g}   \}_{g \in E_{n}}   \big)\ \\
\,& = \,\mathbf{W}_{N}\big( \{ 1+ \widehat{\omega}_{a}   \}_{a \in E_{N}}   \big)\,-\, \mathbf{W}_{n}\Big( \big\{ \mathbf{\overline{W}}_{N-n}\big(\{ 1\,+\,\omega_{a}  \}_{a\Try g }\big)  \big\}_{g \in E_{n}}   \Big)\, \\
\, & = \, \mathbf{W}_{n}\big(\{  Y_{g}^{(n)}+Z_{g}^{(n)}\}_{g\in E_{n}  } \big)\,-\,\mathbf{W}_{n}\big( \{   Y_{g}^{(n)} \}_{g\in E_{n}  } \big)\, ,
\end{align*}
where for $g\in  E_{n}\equiv G_{N-n,N}$
$$  Y_{g}^{(N)}\, :=\, \mathbf{\overline{W}}_{N-n}\big(\{1+\widehat{\omega}_{a}   \}_{a \Try g}\big)     \hspace{.5cm}\text{and} \hspace{.5cm}  Z_{g}^{(N)} \, :=\,\mathbf{W}_{N-n}\big(\{1+\widehat{\omega}_{a}   \}_{a\Try g}\big)\, -\, \mathbf{\overline{W}}_{N-n}\big(\{1+\widehat{\omega}_{a}   \}_{a\Try g}\big)   \,.  $$
In the above, we have used the  one-to-one correspondence between $E_{n}$ and  $G_{N-n,N}$ to classify edges  $a\in E_{N}$ as elements $g$ in $E_{n}$ (since $G_{N-n,N}$ defines a partition of $E_{N}$).  Notice that  $Y_{g}^{(n)}$ and $Z_{g}^{(n)}$ are uncorrelated and
$$       \textup{Var}\big( Y_{g}^{(n)}\big) \,=\, r(b/s )^{n}  \hspace{.6cm}\text{and}\hspace{.6cm}   \textup{Var}\big( Z_{g}^{(n)}\big)  \, =\,    \mathit{o}\big(  (b/s )^{n} \big)     \, ,     $$
where the order equality follows from part (ii) of Lemma~\ref{LemBFW}.  Hence we can apply part (i) of Lemma~\ref{LemBFW} to conclude that the difference between $\mathbf{W}_{n}\big(\{  Y_{g}^{(n)}+Z_{g}^{(n)}\}_{g\in E_{n}  } \big)$ and
$\mathbf{W}_{n}\big( \{   Y_{g}^{(n)} \}_{g\in E_{n}  } \big)$ converges in probability to zero with large $n$.\vspace{.3cm}

The properties (I)-(III) of Theorem \ref{ThmMainIII} follow closely from the above limit construction.  For  (II),  notice that as $n\rightarrow \infty$
 for each $(i,j)\in \{1,\cdots, b\}\times \{1,\cdots, s\}$ our result~(\ref{Babadu})  implies
$$ \mathbf{W}_{n-1}\big( \{ 1+ \widehat{\omega}_{g\TS(i,j)}   \}_{g \in E_{n}}   \big) \quad  \stackrel{\mathcal{L}}{\Longrightarrow}\quad  \, X_{(i,j)}\,, $$
where the $X_{(i,j)}$ are independent random variables with law $ L_{rb/s}^{b,s}$.  Moreover,
\begin{align}
\frac{1}{b}\sum_{i=1}^{b}\prod_{1\leq j\leq s} \mathbf{W}_{n-1}\big( \{ 1+ \widehat{\omega}_{g\TS(i,j)}   \}_{g \in E_{n}}\big)\,=\, \mathbf{W}_{n}\big( \{ 1+ \widehat{\omega}_{g}   \}_{g \in E_{n}}   \big) \quad   \stackrel{\mathcal{L}}{\Longrightarrow}\quad L_{r}^{b,s}\,,
\end{align}
where the convergence in law holds by another application of the limit result above. Thus (II) holds.  For (III), we write
\begin{align*}
&\frac{\mathbf{W}_{n}\big( \{ 1+ \widehat{\omega}_{g}   \}_{g \in E_{n}}   \big)\,-\,1}{\sqrt{r}}\\ &\hspace{1cm} \,=  \, \underbrace{\frac{\mathbf{W}_{n}\big( \{ 1+ \widehat{\omega}_{g}   \}_{g \in E_{n}}   \big)\,-\,\mathbf{\overline{W}}_{n}\big( \{ 1+ \widehat{\omega}_{g}   \}_{g \in E_{n}}   \big)}{\sqrt{r}}} \,+\, \underbrace{\frac{\mathbf{\overline{W}}_{n}\big( \{ 1+ \widehat{\omega}_{g}   \}_{g \in E_{n}}   \big)-1}{\sqrt{r}}} \,,\\
&\hspace{3.9cm} \stackrel{\mathcal{P}}{\Longrightarrow } 0 \text{ as } r\searrow 0  \hspace{4.1cm} \stackrel{d}{=}\,\mathcal{N}(0,1)
\end{align*}
where the convergence as $r\searrow 0$ of the first term is uniform in large $n$ by part (ii) of Lemma~\ref{LemBFW}. The normal distribution of the second term follows from \eqref{Horses} being a linear recursion and the initial variables being normal.

\end{proof}

\begin{proof}[Proof of Theorem~\ref{ThmEdge}]  Fix some $0<\epsilon\ll 1 $ and $n\in \mathbb{N}$ such that $x(b/s)^{n}< \epsilon $.  For $N>n$,  let $\{\widehat{\omega}_{a}\}_{a\in E_{N}} $ be an array of normal random variables defined as in the proof of Lemma~\ref{LemExist} for $r$ replaced by $x$.
By the method in the proof of Lemma~\ref{LemExist}, we can construct an array $\{\mathbf{x}_{g}\}_{g\in E_{n}} $ of independent random variables  with distribution   $L_{  x(b/s)^{n}  }^{b,s}$ through limits in probability as $N\rightarrow \infty$ of the form
\begin{align}\label{Crinkle}
 \mathbf{W}_{N-n}\big(   \{ 1+\widehat{\omega}_{a}   \}_{a\Try g }   \big) \hspace{.7cm} \stackrel{\mathcal{P}  }{\Longrightarrow}     \hspace{.7cm} \mathbf{x}_{g}   \, ,
\end{align}
where $\{ 1+\widehat{\omega}_{a}   \}_{a\Try g } $ refers to the subarray of $\{1 + \widehat{\omega}_{a}\}_{a\in E_{N}} $ comprised of all edges $a\in E_{N}$ lying on the subgraph $g\in  G_{N-n,N}\equiv E_{n} $.  Throughout this proof we will freely identify the sets $G_{N-n,N}$ and  $E_{n}$, allowing us to treat elements $g$ of $E_{n}$ as containing sets of edges from $ E_{N}$.   Notice that  $\mathbf{W}_{n}\big( \{ \mathbf{x}_{g}   \}_{g\in E_{n}}  \big) $ has law $L_{  x }^{b,s}$ by Remark~\ref{Triv}.   We will show that for arbitrary $F:\R\rightarrow \R$ with bounded derivative that in the limit $N\rightarrow \infty$
\begin{align}\label{GoGoGirls}
\mathbb{E}\Big[ F\Big(\mathbf{W}_{N}\big( \{  X_ {a}^{(N)}  \}_{a \in E_{N}  }  \big) \Big)   \Big]\hspace{.7cm}\longrightarrow  \hspace{.7cm}\mathbb{E}\Big[F\Big(\mathbf{W}_{n}\big( \{  \mathbf{x}_ {g}  \}_{g \in E_{n }  }  \big) \Big)   \Big] \, .
\end{align}
The above guarantees the weak convergence of $\mathbf{W}_{n}\big( \{  X_ {a}^{(n)}  \}_{a \in E_{n}  }  \big)$ to  $L_{  x }^{b,s}$.

The difference between the terms in~(\ref{GoGoGirls}) is bounded through the triangle inequality as follows:
\begin{align}\label{HerpADerp}
\bigg|\mathbb{E}\Big[  F\Big(\mathbf{W}_{N}\big( &\{  X_ {a}^{(N)}  \}_{a \in E_{N}  }  \big) \Big)   \Big] \,- \,\mathbb{E}\Big[F\Big(\mathbf{W}_{n}\big( \{  \mathbf{x}_ {g}  \}_{g \in E_{n }  }  \big) \Big)   \Big]\bigg| \nonumber  \\ \leq \, & \bigg|\mathbb{E}\Big[  F\Big(\mathbf{W}_{N}\big( \{  X_ {a}^{(N)}  \}_{a \in E_{N}  }  \big) \Big)   \Big] \,- \,\mathbb{E}\Big[F\Big(\mathbf{W}_{n}\Big( \big\{ \mathbf{\overline{W}}_{N-n}\big( \{ X_{a}^{(N)}\}_{a\Try g} \big)  \big\}_{g\in E_{n}}   \Big) \Big)   \Big]\bigg|\nonumber  \\
&+\,\bigg|\mathbb{E}\Big[F\Big(\mathbf{W}_{n}\Big( \big\{ \mathbf{\overline{W}}_{N-n}\big( \{ X_{a}^{(N)}\}_{a\Try g} \big)  \big\}_{g\in E_{n}}   \Big) \Big)   \Big]\, - \,  \mathbb{E}\Big[F\Big(\mathbf{W}_{n}\big( \{  1+\widehat{\omega}_ {g}  \}_{g \in E_{n }  }  \big) \Big)   \Big]  \bigg| \nonumber  \\
&+\, \bigg|   \mathbb{E}\Big[F\Big(\mathbf{W}_{n}\big( \{  1+\widehat{\omega}_ {g}  \}_{g \in E_{n }  }  \big) \Big)   \Big]\, -\,     \mathbb{E}\Big[F\Big(\mathbf{W}_{n}\Big(\big\{ \mathbf{W}_{N-n}\big(   \{ 1+\widehat{\omega}_{a}    \}_{a\Try g }   \big)\big\}_{g\in E_{n} } \Big) \Big)   \Big] \bigg| \nonumber \\
&+\, \bigg|    \mathbb{E}\Big[F\Big(\mathbf{W}_{n}\Big(\big\{ \mathbf{W}_{N-n}\big(   \{ 1+\widehat{\omega}_{a}  \}_{a\Try g }   \big)\big\}_{g\in E_{n} } \Big) \Big)   \Big] \, -\,     \mathbb{E}\Big[F\Big(\mathbf{W}_{n}\big( \{  \mathbf{x}_ {g}  \}_{g \in E_{n }  }  \big) \Big)   \Big] \bigg| \,.
\end{align}
It is sufficient to show that the terms above are each bounded by a constant multiple of $\epsilon >0$ in the limit $N\rightarrow \infty$.
 The last term on the right side of~(\ref{HerpADerp}) converges to zero as $N\rightarrow \infty$ by~(\ref{Crinkle}) and  since $\mathbf{W}_{n}\big(\{ a_{g} \}_{g\in E_{n}}\big)$ is a continuous function (a multivariable polynomial) of the array $ \{ a_{g} \}_{g\in E_{n}}$.  We will treat the first three terms on the right side of~(\ref{HerpADerp}) by proving the following statements:
\begin{enumerate}[(I)]

\item  There is a $C>0$  such that for all $N,n\in \mathbb{N}$ with $N>n$
$$ \textup{Var}\bigg( \mathbf{W}_{N}\big( \{ X_{g}^{(N)}\}_{g\in E_{N}}\big)\, -\, \mathbf{W}_{n}\Big( \big\{ \mathbf{\overline{W}}_{N-n}\big( \{ X_{a}^{(N)}\}_{a\Try g} \big)  \big\}_{g\in E_{n}}   \Big)    \bigg)\, \leq  \, Cx^2( b/s  )^{n}   \, .  $$
Consequently, the first term on the right side of~(\ref{HerpADerp})   is bounded by $\epsilon \sqrt{C} \sup_{x\in \R}| F'(x) | $.

\item   For any fixed $n$,  as $N\rightarrow \infty$
$$\mathbb{E}\Big[F\Big(\mathbf{W}_{n}\Big( \big\{ \mathbf{\overline{W}}_{N-n}\big( \{ X_{a}^{(N)}\}_{a\Try g} \big)  \big\}_{g\in E_{n}}   \Big) \Big)   \Big]\hspace{.5cm}\longrightarrow  \hspace{.5cm} \mathbb{E}\Big[F\Big(\mathbf{W}_{n}\big( \{  1+\widehat{\omega}_ {g}  \}_{g \in E_{n }  }  \big) \Big)   \Big]\, . $$

\item  There is a $C>0$ such that for all $n\in \mathbb{N}$
$$ \textup{Var}\bigg(\mathbf{W}_{n}\big( \{  1+\widehat{\omega}_ {g}  \}_{g \in E_{n }  }  \big) \,-\,\mathbf{W}_{n}\Big(\big\{ \mathbf{W}_{N-n}\big(   \{ 1+\widehat{\omega}_{a}    \}_{a\Try g }   \big)\big\}_{g\in E_{n} } \Big) \bigg)\, \leq \,  Cx^2( b/s  )^{n}   \, .  $$
Hence the third term on the right side of~(\ref{HerpADerp})   is bounded by $\epsilon \sqrt{C}\sup_{x\in \R}| F'(x) |   $.

\end{enumerate}

\noindent (I)  For $g\in E_{n}$, define
$$ Y_{g}^{(N)}\,:=\, \mathbf{\overline{W}}_{N-n}\big( \{ X_{a}^{(N)}\}_{a\Try g} \big) \hspace{.7cm}\text{and}\hspace{.7cm}Z_{g}^{(N)}\,:=\, \mathbf{W}_{N-n}\big( \{ X_{a}^{(N)}\}_{a\Try g} \big)\,-\, \mathbf{\overline{W}}_{N-n}\big( \{ X_{a}^{(N)}\}_{a\Try g} \big) \, .   $$
Notice that
$$ \textup{Var}\big(  Y_{g}^{(N)}  \big)\, = \, x_{N} (b/s)^{n}  \hspace{.7cm}\text{and}\hspace{.7cm}    \textup{Var}\big(  Z_{g}^{(N)}  \big)\, \leq  \  c x^2(b/s)^{2n} \,,   $$
where the inequality holds for some $c>0$ and all $N$, $n$ by part (ii) of Lemma~\ref{LemBFW}.   The variance of the  difference between $\mathbf{W}_{N}\big( \{ X_{a}^{(N)}\}_{a\in E_{N}}\big) $
and $\mathbf{W}_{n}\big( \big\{ \mathbf{\overline{W}}_{N-n}\big( \{ X_{g}^{(N)}\}_{a\Try g} )  \big\}_{g\in E_{n}}   \big)$ can be written as
\begin{align*}
 \,\textup{Var}\bigg( \mathbf{W}_{n}\Big( \big\{ Y_{g}^{(N)} +Z_{g}^{(N)}  \big\}_{g\in E_{n}}   \Big)\,-\,\mathbf{W}_{n}\Big( \big\{ Y_{g}^{(N)} \big\}_{g\in E_{n}}   \Big)\bigg) \, ,
\end{align*}
which is bounded by a constant multiple of $x^{2}(b/s)^{n}$ for all $N$ and $n$ by the bound in the proof of part (i)
 of Lemma~\ref{LemBFW}.

\vspace{.3cm}

\noindent (II):  For $a\in E_{N}$ define $\mathbf{x}_{a}^{(N)} := (s/b)^{ N/2} \big(X_{a}^{(N)}-1\big)$.  Then, by our assumptions on $X_{a}^{(N)}$, the random variables $\mathbf{x}_{a}^{(N)}$ are i.i.d.\ with mean zero, variance $(s/b)^{ N} x_{N}$, and satisfy a Lindeberg condition.   We can write $\mathbf{\overline{W}}_{N-n}\big( \{ X_{a}^{(N)}\}_{a\Try g} )$ in the form
\begin{align*}
&\hspace{1.5cm}\mathbf{\overline{W}}_{N-n}\big( \{ X_{a}^{(N)}\}_{a\Try g} \big)\, =\, 1\,+\,\big(\frac{b}{s}\big)^{\frac{n}{2}}  \Bigg[\underbrace{\frac{1}{(bs)^{\frac{N-n}{2}}}\sum_{a\Try g}\mathbf{x}_{a}^{(N)}}  \Bigg]\, ,\\
&\hspace{6.3cm}\big(\textbf{Central limit-type sum}\big)
\end{align*}
where the sum in square brackets includes $|g|=(sb)^{N-n}$ elements.  By the central limit theorem,  there is convergence in law as $N\rightarrow \infty$
\begin{align*}
\mathbf{\overline{W}}_{N-n}\big( \{ X_{a}^{(N)}\}_{a\Try g} \big)\, -\, 1\hspace{.5cm}\stackrel{\mathcal{L}}{\Longrightarrow} \hspace{.5cm} \widehat{\omega}_{g}\,
\end{align*}
since $ \widehat{\omega}_{g}$ has distribution $\mathcal{N}\big(0,x(b/s)^{n}\big)$. The result then follows since the copies of $\mathbf{\overline{W}}_{N-n}\big( \{ X_{a}^{(N)}\}_{a\Try g} \big)$  for  different $g\in E_{n}$ are independent and $h(\mathbf{y}):=\mathbb{E}\big[F\big( \mathbf{W}_{n}( \{\mathbf{y}_{g}\}_{g\in E_{n}  }\big)    \big]$
is a continuous function of arrays $\mathbf{y}\in \R^{(bs)^{n}}$ (recall that $|E_{n}|=b^n s^n$).

\vspace{.3cm}

\noindent (III):  The argument is similar to (I).   Define
\begin{align*}
Y_{g}\,:=\, 1+\widehat{\omega}_ {g}  \hspace{.7cm}\text{and}\hspace{.7cm} Z_{g}^{(N)}\,:=\, \mathbf{W}_{N-n}\big( \{  1+\widehat{\omega}_ {a} \}_{a\Try g} )\,-\, \mathbf{\overline{W}}_{N-n}\big( \{ 1+\widehat{\omega}_ {a}\}_{a\Try g} ) \, .
\end{align*}
 Notice that for $N>n$
$$Y_{g}=\mathbf{\overline{W}}_{N-n}\big( \{ 1+\widehat{\omega}_{a}\}_{a\Try g} )\, ,\hspace{.7cm} \textup{Var}\big(  Y_{g}  \big)\, = \, x (b/s)^{n} \,, \hspace{.7cm}\text{and}\hspace{.7cm}    \textup{Var}\big(  Z_{g}^{(N)}  \big)\, \leq  \  c x^2(b/s)^{2n} \,,   $$
where the inequality holds for some $c>0$ and all $N$, $n$ by part (ii) of Lemma~\ref{LemBFW}.  Thus we can apply the inequalities  in the proof of part (i) of Lemma~\ref{LemBFW} to bound (III) since
\begin{align*}
\textup{Var}\bigg(\mathbf{W}_{n}\big( \{  1+&\widehat{\omega}_ {g}  \}_{g \in E_{n }  }  \big) \,-\,\mathbf{W}_{n}\Big(\big\{ \mathbf{W}_{N-n}\big(   \{ 1+\widehat{\omega}_{a}    \}_{a\Try g }   \big)\big\}_{g\in E_{n} } \Big) \bigg)\\
&= \,\textup{Var}\bigg( \mathbf{W}_{n}\Big( \big\{ Y_{g} +Z_{g}^{(N)}  \big\}_{g\in E_{n}}   \Big)\,-\,\mathbf{W}_{n}\Big( \big\{ Y_{g} \big\}_{g\in E_{n}}   \Big)\bigg) \, .
\end{align*}

\end{proof}

\subsection{Reducing the vertex problem to the edge problem}

 Now we work towards a proof of Theorem~\ref{ThmMainIII} by approximating the original partition function by the partition function for a model in which the randomness is placed on the edges and is thus  applicable to  Theorem~\ref{ThmEdge}.

\begin{definition}
Define $M_{n}:\R^{+}\rightarrow \R^{+}$ by
$$ M_{n}(x):=\frac{1}{b}\Big[\big( 1+x    \big)^s e^{(s-1)[  \lambda(2\beta_{n})-2\lambda(\beta_{n})   ] }   \,-\,1\Big] \,. $$
\end{definition}

\begin{remark}
Note that for $g\in G_{n-k,n}$ we have  $\textup{Var}\big( W_{n}(\beta_{n}; g)  \big) =M_{n}^{k}(0)$.
\end{remark}
The proof of the following technical lemma is in Section~\ref{SecProofsOne}

\begin{lemma}\label{Initial} Let $\beta_{n}=\BA (b/s)^{n/2}$ and $g\in G_{m,n}$.   There is a $C>0$ such that for all $n\in \mathbb{N}$ and $m\leq n/2$
\begin{enumerate}[i)]
\item  $
\Big|\textup{Var}\big( W_{n}(\beta_{n}; g)  \big)   \,-\,\BA^2\frac{s-1}{s-b}\big( 1-(b/s)^{m}\big)(b/s)^{n-m} \Big| \, \leq \,  C (b/s)^{n}\,  $

\item   $\mathbb{E}\Big[ \big|W_{n}(\beta_{n}; g)-1\big|^4    \Big]\leq C(b/s)^{ 2(n-m)}\, .$

\end{enumerate}

\end{lemma}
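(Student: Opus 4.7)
The plan is to exploit the two recursions satisfied by $W_n(\beta_n;g)$ for $g \in G_{m,n}$ (which is equidistributed with $W_m(\beta_n)$): the variance recursion of Corollary~\ref{Sig}, $\sigma_m(\beta_n) = M_n(\sigma_{m-1}(\beta_n))$ with $\sigma_0 = 0$, for part~(i), and the distributional recursion~(\ref{Induct}) for part~(ii). The starting point is the direct verification that $\tau_m := \BA^2\tfrac{s-1}{s-b}(1-(b/s)^m)(b/s)^{n-m}$ is the exact solution of the linear recursion $\tau_{m+1} = \tfrac{s}{b}\tau_m + \tfrac{s-1}{b}\BA^2(b/s)^n$ with $\tau_0 = 0$. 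Combined with $\lambda(\beta) = \tfrac{\beta^2}{2} + O(\beta^3)$ (from $\mathbb{E}[\omega]=0$, $\mathbb{E}[\omega^2]=1$), the expansion
\begin{equation*}
e_n \, := \, \exp\{(s-1)[\lambda(2\beta_n) - 2\lambda(\beta_n)]\} \, = \, 1 + (s-1)\BA^2(b/s)^n + O\big((b/s)^{3n/2}\big)
\end{equation*}
and $(1+x)^s = 1 + sx + O(x^2)$ give $M_n(x) = \tfrac{s}{b}x + \tfrac{s-1}{b}\BA^2(b/s)^n + O(x^2) + O((b/s)^{3n/2})$.

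A preliminary bootstrap (dominating $M_n$ by its linear upper bound and iterating) yields $\sigma_m \leq C'(b/s)^{n-m}$ uniformly for $m \leq n$. Letting $\epsilon_m := \sigma_m - \tau_m$ and subtracting the recursions gives $|\epsilon_{m+1}| \leq \tfrac{s}{b}|\epsilon_m| + O(\sigma_m^2) + O((b/s)^{3n/2})$, which telescopes from $\epsilon_0 = 0$ into
\begin{equation*}
|\epsilon_m| \, \leq \, C\sum_{k=0}^{m-1}(s/b)^{m-1-k}\big[(b/s)^{2(n-k)} + (b/s)^{3n/2}\big] \, = \, O\big((b/s)^{n+1}\big)
\end{equation*}
whenever $m \leq n/2$. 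This establishes~(i) and clarifies the restriction $m \leq n/2$: it is precisely the regime in which the quadratic correction from $M_n$ is absorbed into the claimed $(b/s)^n$ error.

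For~(ii), let $\rho_m := \mathbb{E}[|W_m(\beta_n) - 1|^4]$. From~(\ref{Induct}), $W_{m+1}(\beta_n) - 1 = \tfrac{1}{b}\sum_{i=1}^{b}(Y_i - 1)$ is a sum of $b$ i.i.d.\ mean-zero copies of $Y - 1$, where $Y = \prod_{j=1}^{s}(1+U^{(j)})\prod_{k=1}^{s-1}(1+V^{(k)})$, $U^{(j)} \stackrel{d}{=} W_m(\beta_n) - 1$, and $V^{(k)} \stackrel{d}{=} E(\beta_n) - 1$. The exact identity for i.i.d.\ mean-zero sums, together with $\mathbb{E}[(Y-1)^2] = b\sigma_{m+1}$, gives
\begin{equation*}
\rho_{m+1} \, = \, \tfrac{1}{b^3}\mathbb{E}[(Y-1)^4] + \tfrac{3(b-1)}{b}\sigma_{m+1}^2.
\end{equation*}
Expanding $Y - 1 = \sum_{(S,T)\neq \emptyset}\prod_{j\in S}U^{(j)}\prod_{k\in T}V^{(k)}$, raising to the fourth power, and taking expectations under independence (with the H\"older bound $|\mathbb{E}[U^3]| \leq \sqrt{\sigma_m \rho_m}$ to handle odd moments), the unique pure-$\rho_m$ contribution comes from the $s$ configurations where all four $(S_i, T_i)$ equal $(\{j_0\}, \emptyset)$, giving coefficient $s$. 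All remaining terms combine $\sigma_m$, $\mathbb{E}[V^2] = O((b/s)^n)$, and $\mathbb{E}[V^4] = O((b/s)^{2n})$ as polynomials of total weight at least four, and are bounded by $\mathit{O}((b/s)^{2(n-m-1)})$ using Part~(i). Thus
\begin{equation*}
\rho_{m+1} \, \leq \, \tfrac{s}{b^3}\rho_m + \tilde{C}(b/s)^{2(n-m-1)}.
\end{equation*}
Since $\tfrac{s}{b^3}(b/s)^2 = \tfrac{1}{bs} < 1$, an induction from $\rho_0 = 0$ yields $\rho_m \leq C(b/s)^{2(n-m)}$ for all $m \leq n/2$.

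The main obstacle will be the combinatorial bookkeeping in the fourth-moment expansion for part~(ii): verifying that the coefficient of $\rho_m$ in $\mathbb{E}[(Y-1)^4]$ is precisely $s$ (and not a larger power of $s$), so that the contraction condition $\tfrac{s}{b^3}(b/s)^2 = \tfrac{1}{bs} < 1$ holds automatically from $b, s > 1$, and uniformly controlling the many cross terms involving the odd moments $\mathbb{E}[U^3]$ and $\mathbb{E}[V^3]$ via H\"older. An equivalent and perhaps cleaner organization is to track the effective kurtosis $\rho_m/\sigma_m^2$ and show that it remains bounded under the recursion, which combined with~(i) yields~(ii) immediately.
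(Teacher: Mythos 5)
Your proposal is correct and follows essentially the same route as the paper: part (i) is the linearization of the variance map $M_{n}$ with a telescoped error bound against the solution of the linear recursion (the paper's telescoping sum, with its threshold condition $M_{n}^{m-1}(0)<c(b/s)^{n/2}$ playing the role of your preliminary bootstrap), and part (ii) is the same contracting fourth-moment recursion $\rho_{m+1}\leq \frac{s}{b^{3}}\rho_{m}+O\big(\sigma_{m}^{2}\big)+O\big((b/s)^{2n}\big)$ with contraction ratio $\frac{s}{b^{3}}\cdot\frac{b^{2}}{s^{2}}=\frac{1}{bs}<1$. The only minor difference is that the paper first bounds the third central moment through a separate recursion before closing the fourth-moment estimate, whereas you absorb the odd-moment cross terms via Cauchy--Schwarz (or the inductive hypothesis) inside the induction; both devices work.
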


\begin{theorem}[Variance convergence]\label{LemVarIII}Define $\ds \alpha_{n}:=\BA^2\frac{s-1}{s-b}(b/s)^{\lfloor n/2\rfloor}  $.   As $n\rightarrow \infty$
\begin{align*}
\big| M_{n}^{n}(  0) \,- \,  \widehat{M}^{\lceil\frac{n}{2}\rceil }(  \alpha_{n} ) \big| \,\longrightarrow \,0\, .
\end{align*}
In particular, by combining the above with part (i) of  Lemma~\ref{LemVarFun}, we have that  $M_{n}^{n}(  0) $ converges to $ \frak{v}_{s,b}\big(\BA^2\frac{s-1}{s-b}\big)$ with large $n$.

\end{theorem}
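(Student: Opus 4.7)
The plan is to split the $n$ iterations of $M_n$ at the midpoint. After roughly $\lfloor n/2\rfloor$ iterations starting from $0$, the output lies in a small-variance regime where Lemma~\ref{Initial} furnishes a sharp linear approximation; the remaining $\lceil n/2\rceil$ iterations are nonlinear but have the benefit that the perturbation $M_n-\widehat{M}$ has shrunk to scale $(b/s)^n$, which is negligible compared to the signal $\alpha_n \sim (b/s)^{\lfloor n/2\rfloor}$. Using $\sigma_m(\beta_n)=M_n^m(0)$, write
\[
M_n^n(0) \;=\; M_n^{\lceil n/2\rceil}\bigl(\sigma_{\lfloor n/2\rfloor}(\beta_n)\bigr),
\]
set $y_n:=\sigma_{\lfloor n/2\rfloor}(\beta_n)$, and apply the triangle inequality
\[
\bigl|M_n^n(0)-\widehat{M}^{\lceil n/2\rceil}(\alpha_n)\bigr|
\;\le\; \underbrace{\bigl|M_n^{\lceil n/2\rceil}(y_n)-\widehat{M}^{\lceil n/2\rceil}(y_n)\bigr|}_{(\mathrm{I})}
\;+\; \underbrace{\bigl|\widehat{M}^{\lceil n/2\rceil}(y_n)-\widehat{M}^{\lceil n/2\rceil}(\alpha_n)\bigr|}_{(\mathrm{II})}.
\]
Lemma~\ref{Initial}(i) with $m=\lfloor n/2\rfloor$ yields $|y_n-\alpha_n|=O((b/s)^n)$ (one may need to absorb a single extra step of the variance recursion to reconcile the parity of $n$ with the floor/ceiling in the definition of $\alpha_n$).

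For (II), the mean value theorem combined with Lemma~\ref{LemMMap}(iii) bounds the derivative of $\widehat{M}^{\lceil n/2\rceil}$ at arguments of order $(b/s)^{\lceil n/2\rceil}$ by $C(s/b)^{\lceil n/2\rceil}$. Multiplying by $|y_n-\alpha_n|=O((b/s)^n)$ gives $(\mathrm{II})=O((b/s)^{\lfloor n/2\rfloor})\to 0$.

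For (I), exploit the explicit form
\[
M_n(x)-\widehat{M}(x) \;=\; \tfrac{1}{b}(1+x)^s\bigl(e^{\delta_n}-1\bigr), \qquad \delta_n := (s-1)\bigl[\lambda(2\beta_n)-2\lambda(\beta_n)\bigr] = O\bigl((b/s)^n\bigr),
\]
and telescope with $k=\lceil n/2\rceil$:
\[
M_n^k(y_n)-\widehat{M}^k(y_n) \;=\; \sum_{j=0}^{k-1}\Bigl[M_n^{k-j-1}\bigl(M_n(\widehat{M}^j(y_n))\bigr) - M_n^{k-j-1}\bigl(\widehat{M}(\widehat{M}^j(y_n))\bigr)\Bigr].
\]
The mean value theorem bounds each summand by the product of a pointwise perturbation of order $(1+\widehat{M}^j(y_n))^s (b/s)^n$ and the Lipschitz norm of $M_n^{k-j-1}$ near $\widehat{M}^{j+1}(y_n)$. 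Corollary~\ref{LemVarFun}(I) confines the trajectory $\{\widehat{M}^j(y_n)\}$ to a fixed bounded interval, and Lemma~\ref{LemMMap}(iii)--(iv) gives the Lipschitz bound $C(s/b)^{k-j-1}$. Summing the resulting geometric series yields $(\mathrm{I})=O((b/s)^{n-k}) = O((b/s)^{\lfloor n/2\rfloor})\to 0$.

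The main obstacle is keeping the constants in Lemma~\ref{LemMMap} uniform in $n$ along the entire trajectory $\{\widehat{M}^j(y_n)\}_{j=0}^{\lceil n/2\rceil-1}$, and verifying that each iterate falls into the regime where the sharp bound (iii) applies (using (iv) as backup otherwise). The requirement $b<s$ is essential here: it is what makes $(b/s)^n$ shrink faster than the Lipschitz growth $(s/b)^{\lceil n/2\rceil}$, so that both (I) and (II) decay. Once the bound $|M_n^n(0) - \widehat{M}^{\lceil n/2\rceil}(\alpha_n)|\to 0$ is established, combining it with Corollary~\ref{LemVarFun}(I) shows that $M_n^n(0)$ converges to $\mathfrak{v}_{b,s}\bigl(\widehat{\beta}^2\tfrac{s-1}{s-b}\bigr)$, giving the ``in particular'' statement.
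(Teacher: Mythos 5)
Your overall strategy is the same as the paper's: split the $n$ iterations at the midpoint, set $\gamma_n:=M_n^{\lfloor n/2\rfloor}(0)$, use the triangle inequality to compare $M_n^{\lceil n/2\rceil}(\gamma_n)$ with $\widehat{M}^{\lceil n/2\rceil}(\gamma_n)$ and then with $\widehat{M}^{\lceil n/2\rceil}(\alpha_n)$, control the second difference via Lemma~\ref{Initial}(i) together with the derivative bound of Lemma~\ref{LemMMap}(iii), and control the first difference by a telescoping sum. Your treatment of term (II) is correct (indeed you track the $(s/b)^{\lceil n/2\rceil}\cdot(b/s)^n$ bookkeeping more explicitly than the paper does), and the parity remark about $\lfloor n/2\rfloor$ versus $\lceil n/2\rceil$ is a legitimate observation that the paper glosses over.

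The one substantive loose end is in term (I), and it comes from the direction of your telescoping. You write the increments as differences of $M_n^{k-j-1}$ evaluated at two nearby points along the $\widehat{M}$-trajectory, so you need a Lipschitz bound on iterates of $M_n$; but Lemma~\ref{LemMMap}(iii)--(iv), which you cite for the bound $C(s/b)^{k-j-1}$, only concerns iterates of $\widehat{M}$. To use it you must first compare: $M_n'(x)=e^{\delta_n}\widehat{M}'(x)$ is easy, but you also need the $M_n$-trajectory started from $\widehat{M}^{j+1}(y_n)$ to remain in a fixed bounded interval for all remaining steps, uniformly in $n$ --- this is exactly the bootstrap you flag as ``the main obstacle'' and leave unresolved. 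The paper telescopes in the mirror order (outer compositions of $\widehat{M}$, inner trajectory of $M_n$), precisely so that only $\widehat{M}$-derivative bounds from Lemma~\ref{LemMMap} are ever needed, and it supplies the missing trajectory control by a stopping-index argument (the index $\widetilde{n}$, shown to exceed $n$ via the lower bound $\widehat{M}^{m+1}(\gamma_n)>\widehat{M}^m(\gamma_n)+\frac{s-1}{b}\gamma_n$ together with $M_n\le\widehat{M}+c(b/s)^n$ on compacts). Your argument can be completed the same way, either by adopting the paper's telescoping order or by running an analogous stopping-index induction to confine the $M_n$-iterates; as written, though, the bound on the Lipschitz constants of $M_n^{k-j-1}$ is asserted rather than proved. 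The final deduction of the ``in particular'' statement from Corollary~\ref{LemVarFun}(I) is fine (modulo the same parity adjustment you already noted).
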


\begin{proof}For $\gamma_{n} :=M_{n}^{\lfloor n/2\rfloor }(0)$, we have the relations
\begin{align}
\big| M_{n}^{n}(  0)\,-\,\widehat{M}^{\lceil  \frac{n}{2}\rceil }(  \alpha_{n} ) \big| \,= &\,\big| M_{n}^{\lceil \frac{n}{2}\rceil }( \gamma_{n} )\,-\,\widehat{M}^{\lceil \frac{n}{2}\rceil }(  \alpha_{n} ) \big| \nonumber \\
\leq &   \,\big| M_{n}^{\lceil \frac{n}{2}\rceil }( \gamma_{n} )\,-\,\widehat{M}^{\lceil \frac{n}{2}\rceil }(  \gamma_{n} ) \big|\,+\,  \big| \widehat{M}^{\lceil \frac{n}{2}\rceil }( \gamma_{n} )\,-\,\widehat{M}^{\lceil\frac{n}{2}\rceil }(  \alpha_{n} ) \big|\, .\label{Forbes}
\end{align}
Thus we just need to bound the two terms on right side above.  For the second term above,
\begin{align*}
  \big| \widehat{M}^{\lceil \frac{n}{2}\rceil }( \gamma_{n} )\,-\,\widehat{M}^{\lceil \frac{n}{2}\rceil}(  \alpha_{n} ) \big|\, \leq \, |\gamma_{n}-\alpha_{n}| \frac{d}{dy}\Big[ \widehat{M}^{\lceil \frac{n}{2}\rceil }( y)   \Big]\Big|_{y=\max(\alpha_{n},\gamma_{n})} \, \leq \,& C (b/s )^{n}\,,
\end{align*}
where we have applied part (i) of  Lemma~\ref{Initial} to control the difference $\gamma_{n} -\alpha_{n}$  and part (iii) of Lemma~\ref{LemMMap} to control the derivative of $ \widehat{M}^{\lceil n/2\rceil }$.

Now we will bound the first term on the right side of~(\ref{Forbes}).  For $x$ in a bounded interval $[0,L]$,  there is a $c>0$ such that
 \begin{align}
  \widehat{M}(x)\, \leq \,   M_{n}(x) \, \leq \,  \widehat{M}(x)+c(b/s)^{n}   \,. \label{SonsOfHarpy}
\end{align}
By using a telescoping sum, we can write the difference between $ M_{n}^{m }( \gamma_{n} )$ and $\widehat{M}^{m }(  \gamma_{n} ) $ as
\begin{align*}
 M_{n}^{m }( \gamma_{n} )\,-\,\widehat{M}^{m }(  \gamma_{n} ) \,  =\,& \sum_{k=0}^{m-1}\Big(\widehat{M}^{m-k-1}\big(M_{n}^{k+1 }( \gamma_{n} )\big)\,-\,\widehat{M}^{m-k }\big(M_{n}^{k }( \gamma_{n} )\big)\Big) \\
 \leq \,&  \sum_{k=0}^{m-1}\bigg[\frac{d}{dy}\widehat{M}^{m-k-1}(y)\Big|_{y=M_{n}^{k+1 }( \gamma_{n} )}        \bigg]\Big( M_{n}\big(M_{n}^{k }( \gamma_{n} )\big)\,-\,\widehat{M}\big(M_{n}^{k }( \gamma_{n} )\big)\Big)\,.
 \intertext{Let $\widetilde{n}\in \mathbb{N}$ be the smallest $m$ such that $M_{n}^{m }( \gamma_{n} ) >\widehat{M}^{m+1 }(  \gamma_{n} )$ or  $\widehat{M}^{m }(  \gamma_{n} )>L$.  For $m< \widetilde{n} $  the above is }
 \leq \,&  cm\big(\frac{b}{s}\big)^{n}\max_{0\leq k< m}   \frac{d}{dy}\widehat{M}^{m-k-1}(y)\Big|_{y=\widehat{M}^{k+2 }( \gamma_{n} )}     \\
\leq \,&  cm\big(\frac{b}{s}\big)^{n} \max_{0\leq k< m} \big(\frac{s}{b}\big)^{m- k}\exp\bigg\{\frac{s-1}{1-\frac{b}{s}}    \widehat{M}^{m}(\gamma_{n} )   \bigg\}\,,
\intertext{where we have applied (iv) of Lemma~\ref{LemMMap} to bound the derivative of $\widehat{M}$.  For $c':=c\exp\big\{ \frac{s-1}{1-\frac{b}{s} }L \big\}  $, the last expression is}
 \leq \,&  c'm\big(\frac{b}{s}\big)^{\lfloor \frac{n}{2}\rfloor}        \,.
\end{align*}

Thus we have that
$$  M_{n}^{m }( \gamma_{n} )\,\leq \,\widehat{M}^{m }(  \gamma_{n} )  \,+\,c'm (b/s)^{\lfloor \frac{n}{2}\rfloor} \,.  $$
for $m< \widetilde{n}$.  The value $\widetilde{n}$ must be greater than $n$ for  $n\gg 1$ since
$\widehat{M}^{m+1 }(  \gamma_{n} )$  has the the lower bound
\begin{align}
\widehat{M}^{m+1 }(  \gamma_{n} ) \, = \,\widehat{M}\big(\widehat{M}^{m}(  \gamma_{n} )\big) \,  > \,\frac{s}{b}\widehat{M}^{m}(  \gamma_{n} )  \,> &\, \widehat{M}^{m}(  \gamma_{n} )\,+\,\frac{s-1 }{b} \gamma_{n}\,,
\end{align}
and  $\gamma_{n}$ has order $(b/s)^{\lfloor n/2\rfloor}$ by Lemma~\ref{Initial}.  Therefore
the first term on the right side of~(\ref{Forbes}) decays with order $n(b/s)^{^{\lfloor n/2\rfloor}} $ for large $n$.

\end{proof}

It is convenient to define a model for which the disorder at ``low" generation vertices is neglected.  We define ``lower" generation to be $ \leq  \lceil n/2\rceil$.  This effectively means that we can work on a problem of size $\lceil n/2\rceil $   in which randomness is placed  on edges.

\begin{proof}[Proof of Theorem~\ref{ThmMainIII}]
Recall that $\beta_n = \BA(b/s)^{n/2}$. For $g\in G_{ \lfloor n/2\rfloor ,n}\equiv E_{ \lceil n/2 \rceil}$, define $X_{g}^{(n)}:=W_{n}(\beta_{n}; g)$.   Notice that the array of random variables $\{X_{g}^{(n)}\}_{g\in E_{ \lceil n/2\rceil}}$ satisfies the conditions of Theorem~\ref{ThmEdge} as a consequence of Lemma~\ref{Initial}.   We can write $W_{n}(\beta_{n})=W_{n}(\beta_{n}; D_{n}) $ as
$$  W_{n}(\beta_{n})\, = \,  \mathbf{W}_{ \lceil \frac{n}{2}\rceil }\big(\{ X_{g}^{(n)}  \}_{g\in E_{ \lceil \frac{n}{2}\rceil}}   \big)\,+ \, \Big( W_{n}(\beta_{n}; D_{n})  \,-\,\mathbf{W}_{ \lceil \frac{n}{2}\rceil }\big(\{ X_{g}^{(n)}  \}_{g\in E_{ \lceil \frac{n}{2}\rceil}}   \big)\Big) \,.  $$
We will show below that the difference between $W_{n}(\beta_{n}; D_{n}) $ and $\mathbf{W}_{ \lceil  n/2\rceil }\big(\{ X_{g}^{(n)}  \}_{g\in E_{ \lceil  n /2\rceil}}   \big) $ converges in probability to zero. Thus the convergence in law of $W_{n}(\beta_{n})$ to $L_{\BA^2\frac{s-1}{s-b}}$ follows from Theorem~\ref{ThmEdge}.

The random variables   $W_{n}(\beta_{n}; D_{n})- \mathbf{W}_{  \lceil n/2\rceil }\big(\{ X_{g}^{(n)}  \}_{g\in E_{ \lceil n/2\rceil}}   \big)    $ and  $ \mathbf{W}_{  \lceil  n/2\rceil }\big(\{ X_{g}^{(n)}  \}_{g\in E_{ \lceil  n/2\rceil}}   \big)    $ are uncorrelated, so
\begin{align*}
 \mathbb{E}\bigg[ \Big(W_{n}(\beta_{n}; D_{n}) - \mathbf{W}_{  \lceil \frac{n}{2}\rceil }\big(\{ X_{g}^{(n)}  \}_{g\in E_{ \lceil \frac{n}{2}\rceil}}   \big)  \Big)^2 \bigg] \,=& \, \textup{Var}\Big( W_{n}(\beta_{n}; D_{n})   \Big) \, - \, \textup{Var}\Big( \mathbf{W}_{  \lceil \frac{n}{2}\rceil }\big(\{ X_{g}^{(n)}  \}_{g\in E_{ \lceil \frac{n}{2}\rceil}}   \big)   \Big)\\  \, = &\, M_{n}^{n }(0 )\,-\,\widehat{M}^{ \lceil \frac{n}{2}\rceil }(  \alpha_{n} ) \, .
\end{align*}
  The above converges to zero by Theorem~\ref{LemVarIII}.

\end{proof}

\subsection{Transition to strong disorder in the distributions $L_{r}^{b,s}$ }

Let $(L_{r}^{b,s})_{r\geq 0}$  be the family of distributions satisfying the properties (I)-(III) listed in Theorem~\ref{ThmMainIII}.  It is clear from the definition that $L_{r}^{b,s}$ converges to the delta mass at $1$ as $r\searrow 0$.  The limiting behavior as $r \rightarrow \infty$ is less obvious, although it is reasonable to expect that the distributions  transition to strong disorder behavior, in other terms, $L_{r}^{b,s}$ converges to the delta mass at $0$. This is indeed the case as the next lemma shows.    Our proof will follow the homogeneous environment tilting method used by Lacoin and Moreno in Section 5.2 of \cite{lacoin}, applied to random weights on  bonds rather than sites.

\begin{lemma}\label{LemTransition}
 $L_{r}^{b,s}$ converges weakly to a $\delta$-distribution at zero as $r\rightarrow \infty$.

\end{lemma}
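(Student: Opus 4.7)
The strategy is to combine the fractional-moment method with the distributional recursion from property (II) of Theorem~\ref{ThmMainIII}. Let $X_{r}\sim L_{r}^{b,s}$. Since $X_{r}\ge 0$ and $\mathbb{E}[X_{r}]=1$, the weak convergence $L_{r}^{b,s}\Rightarrow \delta_{0}$ is equivalent to $X_{r}\to 0$ in probability, and by Markov's inequality it will suffice to establish that $\mathbb{E}[X_{r}^{\theta}]\to 0$ as $r\to\infty$ for some $\theta\in(0,1)$.

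First I would derive a recursive inequality for the fractional moment. From property (II), together with the elementary subadditivity $(u+v)^{\theta}\le u^{\theta}+v^{\theta}$ for $u,v\ge 0$ and $\theta\in(0,1)$, and independence of the $X_{r}^{(i,j)}$,
\begin{align*}
\mathbb{E}\big[X_{(s/b)r}^{\theta}\big]\,\le\,\frac{1}{b^{\theta}}\sum_{i=1}^{b}\mathbb{E}\Big[\prod_{j=1}^{s}\big(X_{r}^{(i,j)}\big)^{\theta}\Big]\,=\,b^{1-\theta}\,\mathbb{E}\big[X_{r}^{\theta}\big]^{s}.
\end{align*}
The map $F_{\theta}(a):=b^{1-\theta}a^{s}$ has fixed points $0$ and $a_{\ast}(\theta):=b^{-(1-\theta)/(s-1)}\in(0,1)$, and $F_{\theta}^{n}(a)\searrow 0$ whenever $a<a_{\ast}(\theta)$. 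Hence once we locate $r_{0}>0$ and $\theta\in(0,1)$ with $\mathbb{E}[X_{r_{0}}^{\theta}]<a_{\ast}(\theta)$, iteration yields $\mathbb{E}\big[X_{(s/b)^{n}r_{0}}^{\theta}\big]\to 0$ super-exponentially fast.

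The crux of the argument, and the main obstacle, is the initialization step. Following the remark made in the paper, I would adapt the homogeneous environment tilting method of Lacoin and Moreno in Section~5.2 of~\cite{lacoin} to the bond-disorder analogue of the partition function. Concretely, I would realize $L_{r_{0}}^{b,s}$ either as the limit of bond-disorder polymer partition functions on $D_{n}$ (as in Section~\ref{Secb<sEdge}) or through the Gaussian construction from the proof of Lemma~\ref{LemExist}, and apply a homogeneous Cameron--Martin shift to the disorder variables. Under the shifted measure the tilted partition function has exponentially small expectation in $r_{0}$, and a H\"older/Cauchy--Schwarz interpolation between the shifted and original laws converts this into an estimate $\mathbb{E}[X_{r_{0}}^{\theta}]\to 0$ as $r_{0}\to\infty$ for a suitable $\theta\in(0,1)$. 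For $r_{0}$ sufficiently large this falls below the threshold $a_{\ast}(\theta)$, which seeds the iteration.

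Finally, convergence along the geometric subsequence $r_{n}=(s/b)^{n}r_{0}$ must be promoted to convergence along every $r\to\infty$. I would write a general $r$ as $(s/b)^{n}\rho$ with $\rho\in[r_{0},(s/b)r_{0}]$, and use uniformity of the seed fractional moment bound in $\rho\in[r_{0},(s/b)r_{0}]$ (which follows from the same tilting estimate together with continuous dependence of $L_{\rho}^{b,s}$ on $\rho$ granted by Theorem~\ref{ThmMainIII}) to conclude. The delicate part of the whole argument is carrying out the tilting step for the abstract limit laws $L_{r}^{b,s}$ and tuning $\theta$ close enough to $1$ so that $a_{\ast}(\theta)$ sits above the tilted bound; once this seed estimate is in hand the rest of the scheme is essentially mechanical.
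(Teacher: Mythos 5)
Your proposal is correct in spirit, and its essential engine is the same as the paper's: a fractional-moment bound obtained from the homogeneous environment tilting of Lacoin--Moreno in \cite{lacoin}, applied to a concrete realization of $L_{r}^{b,s}$ as a limit of edge partition functions built from Gaussian disorder. The difference is architectural. The paper performs the tilting directly at every $r$: realizing $L_{r}^{b,s}$ as the $N\to\infty$ limit of $\mathbf{W}_{N}$ with the positive weights $\exp\{\sqrt{r}(b/s)^{N/2}\widehat{\omega}_{a}-\tfrac{r}{2}(b/s)^{N}\}$, a homogeneous shift plus Cauchy--Schwarz gives the explicit bound $\mathbb{E}[W_{r,N}^{1/2}]\leq e^{(1-\sqrt{r})/2}$ uniformly in $N$, and uniformly bounded second moments let this pass to the limit law, finishing the proof in one stroke. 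Your scheme wraps the same tilting estimate inside a recursion: the inequality $\mathbb{E}[X_{(s/b)r}^{\theta}]\leq b^{1-\theta}\mathbb{E}[X_{r}^{\theta}]^{s}$ from property (II) and subadditivity is correct, as is the fixed-point analysis of $F_{\theta}(a)=b^{1-\theta}a^{s}$, but note that the seed estimate you ask the tilting to provide --- $\mathbb{E}[X_{r_{0}}^{\theta}]\to 0$ as $r_{0}\to\infty$ --- already implies the lemma by Markov's inequality, so the iteration and the interpolation over $\rho\in[r_{0},(s/b)r_{0}]$ add nothing beyond, at best, a quantitative decay rate along the geometric sequence.

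Two points need care if you keep the recursive route. First, you invoke continuous dependence of $L_{\rho}^{b,s}$ on $\rho$ to get uniformity of the seed bound over a compact window; continuity in $r$ is not among the properties (I)--(III) listed in Theorem~\ref{ThmMainIII}, so this would need its own argument --- or, more simply, you should run the tilting bound at every $r$, exactly as the paper does, which makes the windowing step unnecessary. Second, when realizing the law via the Gaussian construction, the linearized weights $1+\sqrt{r}(b/s)^{N/2}\widehat{\omega}_{a}$ can be negative, which complicates both the fractional moment and the change-of-measure computation at finite $N$; the paper avoids this by observing that the proof of Lemma~\ref{LemExist} goes through unchanged with the exponential weights above, and one also needs the (easy) uniform-integrability step $\mathbb{E}[W_{r,N}^{1/2}]\to\mathbb{E}[W_{r}^{1/2}]$, justified by the bounded second moments, to transfer the estimate to the limit law. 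With these adjustments your argument closes, but it then coincides with the paper's proof with redundant machinery layered on top.
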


\begin{proof}
Let $W_{r}$ have distribution $L_{r}^{b,s}$.  It suffices to show that the fractional moment $\mathbb{E}\big[  W_{r}^{1/2}  \big]$ converges to zero as $r\rightarrow \infty$.    By the proof of Lemma~\ref{LemExist}, $W_{r}$ can be constructed through a weak  limit   $N\rightarrow \infty$ of
\begin{align}
\label{Dino}
\mathbf{W}_{N}\bigg(  \Big\{ 1+ \sqrt{r}\big( \frac{b}{s} \big)^{\frac{N}{2}}\widehat{\omega}_{a}  \Big\}_{a\in E_{N}}  \bigg)\,,
\end{align}
where $\mathbf{W}_{N}$ is defined as in Definition~\ref{DefBFW} and $ \{\widehat{\omega}_{g}\}_{g\in E_{N}}$ is an array of independent random variables with distribution $\mathcal{N}\big(0,1\big) $.
The proof of Lemma~\ref{LemExist} also holds with the random variables
$$  W_{r}^{(N)}\,:=\,\mathbf{W}_{N}\Bigg(  \bigg\{ \exp\Big\{\sqrt{r}\big( \frac{b}{s} \big)^{\frac{N}{2}}\widehat{\omega}_{a} -\frac{r}{2}\big( \frac{b}{s} \big)^{N}\Big\}   \bigg\}_{a\in E_{N}} \Bigg)\, $$
in place of~(\ref{Dino}) without any serious changes, and we will do our computation with this expression.  Allow us to  abuse notation by identifying measures with their corresponding  expectation symbols.   Define a new measure $\widetilde{\mathbb{E}}$ on the system with density
$$ \frac{  d\widetilde{\mathbb{E}}}{d \mathbb{E} } \, := \, \exp\Bigg\{-\sum_{a\in E_{N} }\Big( (bs)^{-\frac{N}{2}}\widehat{\omega}_{a}  +\frac{1}{2}(bs)^{-N}\Big) \Bigg\}  \, .$$
By Cauchy-Schwarz, we can write
 \begin{align}\label{Gimmy}
\mathbb{E}\big[ W_{r,N}^{\frac{1}{2}}  \big] \,=\,\widetilde{\mathbb{E}}\bigg[ \frac{d \mathbb{E} }{  d\widetilde{\mathbb{E}}} W_{r,N}^{\frac{1}{2}}  \bigg]  \, \leq \,  \widetilde{\mathbb{E}}\bigg[ \Big(\frac{d \mathbb{E} }{  d\widetilde{\mathbb{E}}} \Big)^2 \bigg]^{\frac{1}{2}}   \widetilde{\mathbb{E}}\big[ W_{r,N}  \big]^{\frac{1}{2}} \,.
\end{align}
However, computations with Gaussian integrals yield that
\begin{align*}
 \widetilde{\mathbb{E}}\bigg[ \Big(\frac{d \mathbb{E} }{  d\widetilde{\mathbb{E}}} \Big)^2 \bigg]^{\frac{1}{2}}\,=\,e^{ \frac{1}{2} } \hspace{1cm}\text{and}\hspace{1cm} \widetilde{\mathbb{E}}\big[ W_{r,N}  \big]^{\frac{1}{2}} \,=\,e^{ -\frac{1}{2}\sqrt{r} } \,.
\end{align*}
The first equation above uses that $|E_{N}|=(bs)^{N}$ and the second uses that each path in $\Gamma_{N}$ contains $s^{N}$ edges.  By applying these inequalities in~(\ref{Gimmy}), we get that $\mathbb{E}\big[ W_{r,N}^{1/2}  \big] $ is smaller than $\exp\{ (1-\sqrt{r})/2  \}$.  For any fixed $r\in \R^{+}$,
$$\mathbb{E}\big[ W_{r,N}^{\frac{1}{2}}  \big] \quad \longrightarrow \quad \mathbb{E}\big[  W_{r}^{\frac{1}{2}}  \big]  $$
as $N\rightarrow \infty$ since $W_{r,N}\stackrel{\mathcal{L}}{\Rightarrow} W_{r}$ and the second moment of $W_{r,N}$ is uniformly bounded for all $N>0$. Therefore, $\mathbb{E}\big[ W_{r,N}^{1/2}  \big] $ is smaller than $\exp\{ (1-\sqrt{r})/2  \}$, which converges to zero as $r\rightarrow \infty$.

\end{proof}

\section{Results for the $b=s$ case}\label{SecWeakDisorder}

In this section we prove Theorem~\ref{ThmMain}.

\subsection{Definitions relevant for the $s=b$ case}

\begin{remark} When considering $\beta = \BA /n\ll 1$, it will be convenient to define
$$ R_{n}(\BA; g)\, : =\,   \sqrt{n}\Big(W_{n}\Big(\frac{\BA}{n}; g\Big)-1 \Big) \, .$$
An explicit recursive relation for $R_{n}(\BA; g)$ can be deduced from~(\ref{WRecur}) starting from $R_{n}(\BA; g)=0$ for $g\in G_{0,n}$:
\begin{align}\label{Full}
R_{n}(\BA; g)\, =\,\,   \sqrt{n}\Bigg[\frac{1}{b}\sum_{i=1}^{b} \prod_{1\leq j\leq b}\bigg(1+\frac{1}{\sqrt{n}}R_{n}\big(\BA; g\TS (i,j)\big)\bigg)\prod_{1\leq j\leq b-1} E_{g\diamond (i,j)}\Big(\frac{\BA}{n}   \Big) \,-\, 1\Bigg]\,.
\end{align}
Notice that for $n\gg 1$ the random variables satisfy the asymptotic relation
\begin{align}\label{Partial}
 R_{n}(\BA; g)\, =& \,\, \frac{1}{b}\sum_{i=1}^{b} \sum_{1\leq j\leq b}R_{n}\big(\BA; g\TS (i,j)\big)  \, +\, \frac{1}{b\sqrt{n}}\sum_{i=1}^{b} \sum_{1\leq j_{1}<j_{2} \leq b}R_{n}\big(\BA; g\TS (i,j_{1})\big) R_{n}\big(\BA; g\TS (i,j_{2})\big)\nonumber  \\  &\, +\,  \frac{\BA}{b\sqrt{n}}\sum_{i=1}^{b} \sum_{1\leq j\leq b-1}\omega_{g\diamond (i,j)} \, +\,\textbf{(Lower-order terms)}\, .
\end{align}
The above uses that $ \omega_{ g\diamond (i,j)}$ has mean zero through the approximation
$$E_{g\diamond (i,j)}\Big(\frac{\BA}{n}\Big)\, = \, \frac{ e^{\frac{\BA}{n} \omega_{ g\diamond (i,j)} }   }{ \mathbb{E}\big[   e^{\frac{\BA}{n} \omega_{ g\diamond (i,j)} } \big]   }\, \approx \,   1\, + \, \frac{\BA}{n}\omega_{ g\diamond (i,j)} \,+\,\mathit{O}\Big(\frac{1}{n^2}\Big)   \, .  $$
The third term on the  right-hand side of~(\ref{Partial})  introduces independent noise  at each step.
The variance of $R_{n}(\BA; g)$ is given by $\varrho_{k}^{(n)}(\BA)=n\sigma_{k}(\BA/n)$, where $\sigma_{k}(\beta)$ is defined as in Corollary~\ref{Sig}.

\end{remark}

\begin{definition}
We define $\widehat{R}_{n}(\BA; g)$ as the solution to the following recursive relation, which corresponds to \eqref{Partial} without the lower-order terms:
\begin{align}\label{PartialII}
 \widehat{R}_{n}(\BA; g)\, :=& \,\, \frac{1}{b}\sum_{i=1}^{b} \sum_{1\leq j \leq b}\widehat{R}_{n}\big(\BA; g\TS (i,j)\big)  \, +\, \frac{1}{b\sqrt{n}}\sum_{i=1}^{b} \sum_{1\leq j_{1}<j_{2}\leq b }\widehat{R}_{n}\big(\BA; g\TS (i,j_{1})\big) \widehat{R}_{n}\big(\BA; g\TS (i,j_{2})\big)\nonumber  \\  &\, +\,  \frac{\BA}{b\sqrt{n}}\sum_{i=1}^{b} \sum_{1\leq j\leq b-1}\omega_{g\diamond (i,j)} \,.
\end{align}
The initial condition is $\widehat{R}_{n}(\BA; g)=0$ for $g\in G_{0,n}$. In addition we define
$$
\widehat{R}_{k,n}(\BA)\, :=\,\frac{1}{b^{n-k}}\sum_{ g\in G_{k,n}}\widehat{R}_{n}(\BA; g)\,.
$$

\end{definition}

The lemma below follows from the recursive relation~(\ref{PartialII}) that defines $\widehat{R}_{n}(\BA; g)$. We define
\begin{align*}
\widehat{\varrho}_k^{(n)}(\BA) := \mathbb{E} \big[ \widehat{R}_n(\BA; g)^2 \big].
\end{align*}
for $g \in G_{k,n}$. Note that the expectation on the right is independent of $g$ by the iid assumption.

\begin{lemma}\label{This}
The moments $\widehat{\varrho}_k^{(n)}$ satisfy the recursive relation
\begin{align*}
       \widehat{\varrho}_{k+1}^{(n)}(\BA)\, = \, \widehat{\varrho}_{k}^{(n)}(\BA) \, + \,\frac{1}{n}\frac{b-1}{2b } \big[\widehat{\varrho}_{k}^{(n)}(\BA)\big]^{2}\,+\,\frac{1}{n}\frac{\BA^2(b-1) }{b  } \,.
\end{align*}
\end{lemma}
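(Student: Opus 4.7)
The plan is to compute $\widehat{\varrho}_{k+1}^{(n)}(\BA)=\mathbb{E}\big[\widehat{R}_n(\BA;g)^2\big]$ for $g\in G_{k+1,n}$ directly from the defining recursion~(\ref{PartialII}). I would decompose $\widehat{R}_n(\BA;g)=A+B+C$, where $A$ is the linear average over the sub-copies $g\TS(i,j)$, $B$ is the quadratic sum over ordered pairs on a common branch, and $C$ is the noise sum over the newly exposed vertices $g\diamond(i,j)$. Two structural facts drive everything: the $bs=b^2$ subgraphs $g\TS(i,j)$ are pairwise disjoint, so the family $\{\widehat{R}_n(\BA;g\TS(i,j))\}_{i,j}$ is i.i.d.\ with common variance $\widehat{\varrho}_k^{(n)}(\BA)$; and the vertices $g\diamond(i,j)$ lie outside every sub-copy, so the noise family $\{\omega_{g\diamond(i,j)}\}$ is jointly independent of the $\widehat{R}$'s.

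Before squaring, I would verify by induction on $k$ that $\mathbb{E}[\widehat{R}_n(\BA;g)]=0$. The base case is immediate, and each of the three summands in~(\ref{PartialII}) has vanishing mean: the linear one by the inductive hypothesis, the quadratic one since $\mathbb{E}[\widehat{R}^{(i,j_1)}\widehat{R}^{(i,j_2)}]=\mathbb{E}[\widehat{R}^{(i,j_1)}]\mathbb{E}[\widehat{R}^{(i,j_2)}]=0$, and the noise one because the $\omega$'s are centered. Next, expanding $\mathbb{E}[(A+B+C)^2]$, the three cross terms vanish: $\mathbb{E}[AC]$ and $\mathbb{E}[BC]$ by the independence of the $\omega$'s from the $\widehat{R}$'s together with centering, and $\mathbb{E}[AB]$ because every contributing triple product $\mathbb{E}[\widehat{R}^{(i_0,j_0)}\widehat{R}^{(i_1,j_{11})}\widehat{R}^{(i_1,j_{12})}]$ decomposes via independence into a product containing a mean-zero factor (either all three indices are distinct, giving zero; or two coincide, in which case the remaining one is mean-zero on its own, and the constraint $j_{11}<j_{12}$ precludes the two $B$-factors from coinciding).

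The three diagonal pieces are then straightforward counts. The cross terms of $\mathbb{E}[A^2]$ vanish by pairwise independence, so only the $b^2$ diagonal pairs contribute and $\mathbb{E}[A^2]$ collapses to $\widehat{\varrho}_k^{(n)}(\BA)$. The noise piece $C$ is a sum of $b(b-1)$ i.i.d.\ centered unit-variance terms, giving $\mathbb{E}[C^2]=\BA^2(b-1)/(bn)$. For $\mathbb{E}[B^2]$ I would enumerate which quadruples of factors $\widehat{R}^{(i,j_1)}\widehat{R}^{(i,j_2)}\widehat{R}^{(i',j_1')}\widehat{R}^{(i',j_2')}$ have nonvanishing expectation: case analysis on whether $i=i'$ and on how the pair $\{j_1,j_2\}$ meets $\{j_1',j_2'\}$ shows that, thanks to centering and the ordering constraint, only the fully matched configuration $(i,j_1,j_2)=(i',j_1',j_2')$ contributes, yielding a multiple of $\tfrac{1}{n}[\widehat{\varrho}_k^{(n)}(\BA)]^2$. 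Summing the three diagonal contributions produces the claimed recursion.

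No step is conceptually hard here; the whole argument is second-moment bookkeeping built on the graph's independence structure. The one place that needs care is the quadratic term, where the ordering constraint $j_1<j_2$ rules out pairings that would otherwise be double-counted; once the surviving pairings are correctly enumerated, the recursion drops out by combining the three variances.
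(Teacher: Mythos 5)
Your approach is the right one and is essentially the paper's own: the paper gives no written proof beyond the remark that the lemma ``follows from the recursive relation'' defining $\widehat{R}_{n}(\BA;g)$, and your decomposition into the linear part $A$, the quadratic part $B$ and the noise part $C$, the mean-zero induction, the vanishing of all cross terms, and the diagonal counts are exactly that second-moment computation. The independence structure you invoke is also correct, since each $\widehat{R}_{n}(\BA;g\TS(i,j))$ depends only on the disorder interior to its own sub-copy while the $\omega_{g\diamond(i,j)}$ sit at the connecting vertices outside every sub-copy.

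The one place that needs repair is precisely the step you left implicit. Carrying out your enumeration for $B=\frac{1}{b\sqrt{n}}\sum_{i}\sum_{j_1<j_2}\widehat{R}^{(i,j_1)}\widehat{R}^{(i,j_2)}$, the surviving fully matched quadruples number $b\binom{b}{2}=\tfrac{b^{2}(b-1)}{2}$, so $\mathbb{E}[B^{2}]=\frac{1}{b^{2}n}\cdot\frac{b^{2}(b-1)}{2}\big[\widehat{\varrho}_{k}^{(n)}(\BA)\big]^{2}=\frac{b-1}{2n}\big[\widehat{\varrho}_{k}^{(n)}(\BA)\big]^{2}$, and not $\frac{b-1}{2bn}\big[\widehat{\varrho}_{k}^{(n)}(\BA)\big]^{2}$ as the displayed statement of the lemma would require. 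The coefficient $\frac{b-1}{2}$ is in fact the one the paper relies on everywhere downstream: it is the quadratic coefficient of $\widehat{M}_{n}$ in Definition~\ref{DefMaps}, it is what Remark~\ref{RemarkMaps} asserts through $\widehat{\varrho}_{k+1}^{(n)}=\widehat{M}_{n}\big(\widehat{\varrho}_{k}^{(n)}\big)$, and it is what yields the tangent solution, the limiting variance $\upsilon_{b}(\BA)$ and the critical value $\kappa_{b}$ in (\ref{KappaB}); the factor $\frac{b-1}{2b}$ in the printed lemma is therefore a typo. Your pieces $\mathbb{E}[A^{2}]=\widehat{\varrho}_{k}^{(n)}(\BA)$ and $\mathbb{E}[C^{2}]=\frac{\BA^{2}(b-1)}{bn}$ are right, but the sentence ``summing the three diagonal contributions produces the claimed recursion'' silently passes over the mismatch in the quadratic term: write the constant out explicitly and note that the recursion you actually obtain is the one encoded by $\widehat{M}_{n}$ in Definition~\ref{DefMaps}, which is the form used in the rest of the argument.
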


The following lemma is also an immediate consequence of the recursive relation~(\ref{PartialII}).

\begin{lemma}\label{LemStochRecur}
The random variables $ \widehat{R}_{k,n}(\BA ) $ satisfy the recursive equation in $k\in \mathbb{N}$ given by:
\begin{align*}
\widehat{R}_{k+1,n}(\BA ) \,= & \,\,\widehat{R}_{k,n}(\BA ) \, +\, \frac{1}{b\sqrt{n}}\sum_{g\in G_{k,n }}\sum_{i=1}^{b} \sum_{1\leq j_{1}<j_{2}\leq b }\widehat{R}_{n}\big(\BA; g\TS (i,j_{1})\big) \widehat{R}_{n}\big(\BA; g\TS (i,j_{2})\big) \\  &\, +\,  \sum_{g\in G_{k,n }}\frac{\BA}{b\sqrt{n}}\sum_{i=1}^{b} \sum_{1\leq j\leq b-1}\omega_{g\diamond (i,j)} \, .
\end{align*}

\end{lemma}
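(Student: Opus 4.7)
The proof is a direct unfolding of the definition of $\widehat{R}_{k+1,n}(\BA)$ using the recursion~(\ref{PartialII}). I would start from
\[
\widehat{R}_{k+1,n}(\BA) \,=\, \frac{1}{b^{n-k-1}} \sum_{g \in G_{k+1,n}} \widehat{R}_{n}(\BA; g),
\]
and substitute the three-term recursion~(\ref{PartialII}) into each summand. This produces three outer sums coming from: a linear contribution (the first term of~(\ref{PartialII})), a quadratic interaction (the second term), and an independent Gaussian-type noise (the third term).

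The linear piece is the only one that collapses. The key combinatorial observation is that as $g$ ranges over $G_{k+1,n}$ and $(i,j)$ ranges over $\{1,\dots,b\}^2$, the children $g\TS(i,j)$ enumerate $G_{k,n}$ bijectively: every element of $G_{k,n}$ sits inside a unique parent in $G_{k+1,n}$, since $G_{k+1,n}$ partitions $G_{k,n}$ via the canonical embedding. Combining this bijection with the internal factor $1/b$ in the recursion yields
\[
\frac{1}{b^{n-k-1}} \sum_{g \in G_{k+1,n}} \frac{1}{b} \sum_{i=1}^{b} \sum_{1\leq j\leq b} \widehat{R}_{n}\bigl(\BA;\, g\TS(i,j)\bigr) \,=\, \frac{1}{b^{n-k}} \sum_{h \in G_{k,n}} \widehat{R}_{n}(\BA; h) \,=\, \widehat{R}_{k,n}(\BA),
\]
which is the leading term on the right-hand side of the lemma.

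The quadratic and noise contributions are carried along essentially unchanged. After combining the outer normalization $1/b^{n-k-1}$ with the $1/(b\sqrt{n})$ inside the recursion and re-expressing the sum over parent subgraphs in $G_{k+1,n}$ via the bijection with the partition cells of $G_{k,n}$, one reads off directly the quadratic cross-term and the noise term in the stated formula. Since the derivation is purely algebraic, there is no real obstacle; the only task is the bookkeeping of the nested index sets, and the statement follows from the partition property used for the linear term. No probabilistic input beyond the definition of $\widehat{R}_n(\BA;g)$ is required.
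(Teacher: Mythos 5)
Your unfolding of the definition of $\widehat{R}_{k+1,n}(\BA)$, substituting the defining recursion for $\widehat{R}_{n}(\BA;g)$ and using the parent--child bijection between $G_{k+1,n}$ and the partition of $G_{k,n}$ to collapse the linear term to $\widehat{R}_{k,n}(\BA)$, is exactly the argument the paper intends: it states the lemma as an immediate consequence of that recursion and gives no further proof. One bookkeeping caveat: the quadratic and noise contributions keep the outer prefactor $1/b^{n-k-1}$ and remain indexed by the parents $g\in G_{k+1,n}$ (it is siblings inside a common parent that get multiplied, and the noise vertices are those of the parent copies), so they are not literally ``read off'' as unnormalized sums over $G_{k,n}$ as in the displayed statement --- that display should be understood with this normalization and index shift, and your derivation is the correct one.
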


\subsection{The case of $b=s$ and $\BA< \kappa_{b}$}

We will now reset the notation for the map $ M_{n}$ that was assigned in Section~\ref{Secb<s} to take in consideration the new scaling $\beta_{n}=\BA/n$ that is relevant  when $b=s$.

\begin{definition}\label{DefMaps} Define the maps $M_{n},\widehat{M}_{n}:\R^{+}\rightarrow \R^{+}$ for $n\in \mathbb{N}$ as
\begin{itemize}

\item $\ds M_{n}(x)\, :=\, \frac{n}{b}\Big[\big(1+ \frac{x}{n} \big)^b e^{(b-1)[\lambda (2\frac{\widehat{\beta}}{n})-2\lambda(\frac{\widehat{\beta}}{n})  ] }   \,-\,1   \Big]$

\item $\displaystyle \widehat{M}_{n}(x)\, :=\, x+\frac{1}{n}\frac{b-1}{2}x^{2}+\frac{1}{n}\frac{\BA^{2} (b-1)}{b} $

\end{itemize}

\end{definition}

\begin{remark}\label{RemarkMaps}
The maps $M_{n}$ and $\widehat{M}_{n}$ are defined so that
$$\varrho_{k+1}^{(n)}(\BA)\, =\,  M_{n}\big( \varrho_{k}^{(n)}(\BA) \big) \quad \text{and}\quad  \widehat{\varrho}_{k+1}^{(n)}(\BA)\, =\,  \widehat{M}_{n}\big( \widehat{\varrho}_{k}^{(n)}(\BA) \big)\, . $$
Thus the sequence $k \mapsto \varrho_k^{(n)}$ is determined by repeated compositions of the maps $M_n$. 
\end{remark}

The proof of Lemma~\ref{LemTan} is calculus-based, and we have put it in Section~\ref{SecProofsTwo}

\begin{lemma}\label{LemTan}
Let $\BA\in (0,\kappa_{b})$.  Define $\gamma(x,y):=\tan^{-1}\Big(\frac{\sqrt{b}}{\BA\sqrt{2}}x\Big)+ \frac{\BA(b-1)}{\sqrt{2b}}y $, and for $\epsilon > 0$ let $$A_{\epsilon} = \left \{ (x, m, n) \in \R^+ \times \N \times \N : \gamma \left( x, \frac{m}{n} \right) < \frac{\pi}{2} - \epsilon \right \}.$$
Then for any $\epsilon > 0$ there is a constant $C > 0$ such that

\begin{enumerate}[i)]
\item  $ \displaystyle
\sup_{ (x, m, n) \in A_{\epsilon}  }
 \frac{d}{dx} \widehat{M}_{n}^{m}(x)   \, \leq \,  C\, ,
$

\item $ \displaystyle
\sup_{ (x, m, n) \in A_{\epsilon}  }\Bigg| \widehat{M}_{n}^{m}(x)\,- \, \frac{\BA\sqrt{2}}{\sqrt{b}}\tan\bigg(\tan^{-1}\bigg( \frac{\sqrt{b}}{\BA\sqrt{2}}x \bigg)+  \BA\frac{b-1}{\sqrt{2b}} \frac{m}{n}   \bigg)\Bigg| \, \leq \,  \frac{C}{ n}\, ,
$

\item $ \displaystyle
\sup_{ (x,m,n) \in A_{\epsilon}   }\Big| M_{n}^{m}(x)\,-\, \widehat{M}_{n}^{m}(x)\Big| \, \leq \,  \frac{C}{ n}\, .
$

\end{enumerate}

\end{lemma}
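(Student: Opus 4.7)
The plan is to exploit the observation that $\widehat{M}_n$ is the forward Euler discretization with step $1/n$ of the autonomous ODE
\begin{align*}
\phi'(r) \,=\, \frac{b-1}{2}\phi(r)^2 \,+\, \frac{\BA^2(b-1)}{b},
\end{align*}
whose explicit solution starting from $\phi(0;x) = x$ is precisely the tangent expression appearing in (ii), namely $\phi(r;x) = \frac{\BA\sqrt{2}}{\sqrt{b}}\tan(\gamma(x,r))$. The condition defining $A_\epsilon$ is that the flow stays at angular distance at least $\epsilon$ from the blowup at $\pi/2$; on $A_\epsilon$ both $\phi(r;x)$ and the time $m/n$ are uniformly bounded by constants depending only on $\epsilon$. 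This uniformity drives every estimate, and I will prove the three parts in the order (i), (ii), (iii).

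For (i) the chain rule gives $\frac{d}{dx}\widehat{M}_n^m(x) = \prod_{k=0}^{m-1}\bigl(1 + \frac{b-1}{n}\widehat{M}_n^k(x)\bigr)$, which by $\log(1+y) \leq y$ is bounded above by $\exp\bigl(\frac{b-1}{n}\sum_{k=0}^{m-1}\widehat{M}_n^k(x)\bigr)$. Convexity of the flow (one has $\phi'' = (b-1)\phi\phi' \geq 0$) combined with monotonicity of $\widehat{M}_n$ yields the classical forward Euler underestimate $\widehat{M}_n^k(x) \leq \phi(k/n;x)$ by induction on $k$, and since $\phi$ is increasing the Riemann sum is dominated by the integral $\int_0^{m/n}\phi(r;x)\,dr$. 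A direct antiderivative computation gives this integral as $\frac{2}{b-1}\log\bigl(\cos(\gamma(x,0))/\cos(\gamma(x,m/n))\bigr)$, and on $A_\epsilon$ the denominator is at least $\sin\epsilon$, so $\frac{d}{dx}\widehat{M}_n^m(x) \leq 1/\sin^2\epsilon$.

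For (ii) I will run the standard global error analysis for Euler's method. A one-step Taylor expansion of $\phi$ in $r$ together with the defining formula for $\widehat{M}_n$ gives the local error $|\widehat{M}_n(y) - \phi(1/n;y)| \leq C/n^2$ for $y$ in the bounded range picked out by $A_\epsilon$, with $C$ controlled by the maximum of $\phi''$ along the trajectory. Writing $e_k := \widehat{M}_n^k(x) - \phi(k/n;x)$ and using that $\widehat{M}_n'(y) = 1 + (b-1)y/n$ is $1 + O(1/n)$ on the relevant range, one obtains the discrete Gronwall inequality $|e_{k+1}| \leq (1 + C/n)|e_k| + C/n^2$, which unrolls to $|e_m| \leq (mC/n^2)\exp(Cm/n) = O(1/n)$ uniformly on $A_\epsilon$ since $m/n$ is itself bounded there.

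For (iii), Taylor expansion of $\lambda$ (giving $\lambda(2\beta) - 2\lambda(\beta) = \beta^2 + O(\beta^3)$) and of $(1 + x/n)^b$ yields $|M_n(y) - \widehat{M}_n(y)| \leq C(1 + y^3)/n^2$ on any bounded set. Writing the difference as the telescoping sum
\begin{align*}
M_n^m(x) - \widehat{M}_n^m(x) \,=\, \sum_{k=0}^{m-1} \bigl[\widehat{M}_n^{m-1-k}\bigr]'(\xi_k)\bigl[M_n(M_n^k(x)) - \widehat{M}_n(M_n^k(x))\bigr],
\end{align*}
bounding each derivative by the constant from (i), each one-step difference by $C/n^2$, and summing $m = O(n)$ terms yields the claimed $O(1/n)$. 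The main obstacle, and the reason I save (iii) for last, is a bootstrap: both of the cited bounds require $M_n^k(x)$ to lie in the bounded region picked out by $A_\epsilon$. This is handled by letting $k^\star$ be the first index $k \leq m$ at which $M_n^k(x)$ exceeds $\phi(m/n;x) + 1$ (setting $k^\star = m+1$ if no such $k$ exists), applying the telescoping bound on $\{k < k^\star\}$ to obtain $O(1/n)$ control there, and concluding $k^\star = m+1$ for all $n$ sufficiently large, so that the bound propagates through the full range.
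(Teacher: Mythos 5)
Parts (i) and (ii) are sound and follow essentially the same strategy as the paper: the monotonicity/convexity argument for the Euler underestimate $\widehat{M}_n^k(x)\leq\phi(k/n;x)$, the product-of-derivatives-to-Riemann-sum-to-integral chain for (i), and propagation of the $O(1/n^2)$ local error for (ii). Your discrete-Gronwall formulation of (ii) is a mild repackaging of the paper's telescoping sum but is equally valid.

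The bootstrap in part (iii), however, has a genuine flaw. To bound $[\widehat{M}_n^{m-1-k}]'(\xi_k)$ via part (i) you need $(\xi_k, m-1-k, n)\in A_{\epsilon'}$ for a fixed $\epsilon'>0$; that is, you need $\gamma(\xi_k, (m-1-k)/n) < \pi/2 - \epsilon'$, a condition that couples the admissible size of $\xi_k$ to the remaining horizon $m-1-k$. A static bound of the form $\xi_k \leq \phi(m/n;x)+1$ does not give this. Concretely, for small $k$ (so $m-1-k\approx m$) and $\xi_k$ on the order of $\phi(m/n;x)$, one has $\gamma(\xi_k,(m-1-k)/n)\approx \gamma(x,m/n)+\frac{\BA(b-1)}{\sqrt{2b}}\frac{m-1-k}{n}$, which exceeds $\pi/2$ whenever $m/n$ is order one; in that regime $\widehat{M}_n^{m-1-k}(\xi_k)$ blows up and the composed derivative is unbounded. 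So within your stated bootstrap region $\{k<k^\star\}$ the derivative bound is not actually available, and the telescoping estimate does not close. The paper's bootstrap avoids this by defining $N$ as the first index at which $\max_{0\leq\ell\leq N}\gamma(M_n^\ell(x),(N-\ell)/n)\geq\pi/2-\epsilon/2$; this condition decreases the allowed size of $M_n^\ell(x)$ as the remaining horizon $N-\ell$ grows, which is exactly what the derivative estimate requires. Your argument can be repaired by replacing the static threshold $\phi(m/n;x)+1$ with a horizon-dependent one (equivalently, by inductively maintaining $M_n^j(x)\leq\phi(j/n;x)+C/n$ and using the semigroup identity $\gamma(\phi(j/n;x),(k-j)/n)=\gamma(x,k/n)$), but as written the gap is real.
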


\begin{remark} The function $g(r):= \frac{\BA\sqrt{2}}{\sqrt{b}}\tan\big(\tan^{-1}\big( \frac{\sqrt{b}}{\BA\sqrt{2}}x \big)+  \BA\frac{b-1}{\sqrt{2b}} r   \big)$ is the solution to the differential equation
$$  \frac{d}{dr}g(r) \, = \, \frac{b-1}{2} g^{2}(r) \, +\,\frac{b-1}{b} \widehat{\beta}^{2}      $$
with initial condition $g(0)=  x$.
\end{remark}

\begin{lemma}\text{ }\label{LemVar}
\begin{enumerate}[i)]

\item For any $\BA\in [0,\kappa_{b})$ and $r\in [0, 1  ]$, there is convergence as $n\rightarrow \infty$ given by
$$  \widehat{\varrho}_ {\lfloor nr \rfloor}^{(n)}(\BA) \quad \longrightarrow \quad \frac{\BA\sqrt{2}}{\sqrt{b}}\tan\Big(\BA\frac{b-1}{\sqrt{2b}} r  \Big)\, . $$

\item   For $m \in \mathbb{N}$, define  $\ds \widehat{\varrho}_{k,m}^{(n)}(\BA) := \mathbb{E}\big[\big(\widehat{R}_{n}(\BA; g)\big)^m \big]$. Then for any $\BA\in [0,\kappa_{b})$ and $m\in \mathbb{N}$, there is a $C>0$ such that for all  $r\in [0, 1  ]$ and $n\in \mathbb{N}$
$$\widehat{\varrho}_ {\lfloor nr \rfloor, 2m}^{(n)}(\BA)\, \leq  \, C \Big(\widehat{\varrho}_ {\lfloor nr \rfloor, 2}^{(n)}(\BA)\Big)^m     $$
and, in particular, $\widehat{\varrho}_ {\lfloor nr \rfloor, 2m}^{(n)}(\BA)$ is uniformly bounded over the stated range of variables.

\item For $0\leq j \leq k\leq n$, the random variables $\widehat{R}_{j,n}(\BA )$ and  $\widehat{R}_{k,n}(\BA )- \widehat{R}_{j,n}(\BA )$ are uncorrelated.

\end{enumerate}

\end{lemma}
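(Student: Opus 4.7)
For part (i), the plan is to combine Remark~\ref{RemarkMaps} with the initial condition $\widehat{R}_n(\BA; g) = 0$ for $g \in G_{0,n}$, which together yield $\widehat{\varrho}_{\lfloor nr \rfloor}^{(n)}(\BA) = \widehat{M}_n^{\lfloor nr \rfloor}(0)$. Since $\BA \in [0,\kappa_b)$ forces $\BA\frac{b-1}{\sqrt{2b}} < \pi/2$, there is a fixed $\epsilon > 0$ such that the triple $(0, \lfloor nr \rfloor, n)$ lies in the set $A_\epsilon$ for every $r \in [0,1]$ and every $n \in \mathbb{N}$. Applying Lemma~\ref{LemTan}(ii) at $x = 0$ then gives
\[
\widehat{\varrho}_{\lfloor nr \rfloor}^{(n)}(\BA) \,=\, \frac{\BA\sqrt{2}}{\sqrt{b}}\tan\!\Big(\BA\frac{b-1}{\sqrt{2b}}\,\frac{\lfloor nr \rfloor}{n}\Big) \,+\, O\!\Big(\frac{1}{n}\Big),
\]
and sending $n \to \infty$ yields the claimed limit by continuity of $\tan$.

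For part (ii) I will argue by induction on $m$, carrying the inductive hypothesis that for all $m' < m$ there exists $C_{m'}$ with $\widehat{\varrho}_{k,2m'}^{(n)}(\BA) \leq C_{m'} (\widehat{\varrho}_{k,2}^{(n)}(\BA))^{m'}$ uniformly in $n$ and $k \leq n$. At the inductive step I use the decomposition from (\ref{PartialII}) to write $\widehat{R}_n(\BA; g) = L + Q + N$ for $g \in G_{k,n}$, where $L$ is the average of the $b^2$ i.i.d.\ children $\widehat{R}_n(\BA; g\TS(i,j))$, $Q$ is the sum of pairwise products of distinct sibling children within each branch, and $N$ is the on-site noise term. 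All three are mean zero (for $Q$ this uses that sibling sub-copies have disjoint disorder supports, so each product of two centered factors is itself centered), and $N$ is independent of $L+Q$ since $N$ lives on the generation $n-k+1$ vertices while $L$ and $Q$ only see later generations. Standard Rosenthal/Marcinkiewicz--Zygmund inequalities applied separately to the three pieces, together with the identity $\sum_{j_1<j_2} R_{j_1} R_{j_2} = \tfrac12[(\sum_j R_j)^2 - \sum_j R_j^2]$ used to handle $Q$, bound $\mathbb{E}[(L+Q+N)^{2m}]$ by a polynomial in $\widehat{\varrho}_{k-1, 2m'}^{(n)}$ whose spurious higher-order terms carry compensating powers of $1/n$ coming from the $1/\sqrt{n}$ prefactors of $Q$ and $N$. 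Substituting the inductive hypothesis and comparing with the variance recursion of Lemma~\ref{This} produces a recursive inequality of the schematic form
\[
\widehat{\varrho}_{k,2m}^{(n)}(\BA) \,\leq\, \widehat{\varrho}_{k-1,2m}^{(n)}(\BA) \,+\, \frac{C_m}{n}\bigl(\widehat{\varrho}_{k-1,2}^{(n)}(\BA)\bigr)^{m} \,+\, (\text{lower-order terms}),
\]
which, combined with the parallel recursion for $(\widehat{\varrho}_{k,2}^{(n)})^m$, closes the induction. The main obstacle will be the careful bookkeeping of the many cross-terms coming from $Q^{2m}$ and the mixed terms $L^a Q^b N^c$, verifying that each carries enough factors of $1/n$ to be absorbed into the target polynomial-in-variance bound.

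For part (iii) I will use an orthogonality argument exploiting the disjoint-support structure of the disorder. First, induction on $k$ using (\ref{PartialII}) shows $\mathbb{E}[\widehat{R}_n(\BA; g)] = 0$ for every $g$: the linear term inherits mean zero from the children, the noise term is mean zero, and each quadratic product $\widehat{R}_n(\BA; g\TS(i,j_1))\widehat{R}_n(\BA; g\TS(i,j_2))$ factors as a product of two independent mean-zero variables since the sibling sub-copies use disjoint disorder. To compute the covariance I telescope $\widehat{R}_{k,n} - \widehat{R}_{j,n} = \sum_{l=j}^{k-1}(\widehat{R}_{l+1,n} - \widehat{R}_{l,n})$ via Lemma~\ref{LemStochRecur} and handle each increment separately. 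The on-site noise piece of the $l$-th increment depends only on generation $n-l$ vertices; since $l \geq j$ these generations are disjoint from (and hence independent of) the generations $\{n, n-1, \ldots, n-j+1\}$ on which $\widehat{R}_{j,n}$ depends, and being mean zero this piece contributes nothing to the covariance. For the quadratic piece I decompose $\widehat{R}_{j,n}$ according to whether each contributing sub-copy $g' \in G_{j,n}$ lies inside $g\TS(i,j_1)$, inside $g\TS(i,j_2)$, or in neither sibling: in the ``neither'' case the sub-copy is independent of the product and the product is mean zero; in either ``inside'' case the opposite child factor is independent of everything else and itself has mean zero. Summing these vanishing contributions over all indices yields the claimed uncorrelatedness.
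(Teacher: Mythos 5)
Parts (i) and (iii) of your proposal are correct and take essentially the paper's route: (i) is exactly the combination of Remark~\ref{RemarkMaps} (so that $\widehat{\varrho}_{m}^{(n)}(\BA)=\widehat{M}_n^{m}(0)$) with Lemma~\ref{LemTan}(ii), using that $\gamma(0,m/n)\le \BA\frac{b-1}{\sqrt{2b}}<\frac{\pi}{2}$ uniformly for $\BA<\kappa_b$; and (iii) is a fleshed-out version of the paper's one-line appeal to Lemma~\ref{LemStochRecur}, resting on the disjointness of the disorder supports of sibling sub-copies and of the generation-$(n-l)$ noise, together with the mean-zero property of each factor.

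The gap is in part (ii). The schematic inequality you aim for, $\widehat{\varrho}_{k,2m}^{(n)}\le \widehat{\varrho}_{k-1,2m}^{(n)}+\frac{C_m}{n}\big(\widehat{\varrho}_{k-1,2}^{(n)}\big)^m+\cdots$, is not what the moment computation yields, and the mechanism you invoke (every non-top term carrying compensating powers of $1/n$ from the $1/\sqrt{n}$ prefactors of $Q$ and $N$) is not what controls the moments. The linear part is $L=\frac1b\sum_{i,j}\widehat{R}_n\big(\BA;g\TS(i,j)\big)$ --- note this is not an ``average'' of the $b^2$ children, which may be the source of the slip --- and already $\mathbb{E}[L^{2m}]$ contains, from pairings of the $b^2$ centered summands, a term of size roughly $(2m-1)!!\,\big(\widehat{\varrho}_{k-1,2}^{(n)}\big)^m$ with an $O(1)$ coefficient and no factor of $1/n$; compare the paper's fourth-moment recursion, where $\big[\widehat{\varrho}_{k,2}^{(n)}\big]^2$ enters with coefficient $\frac{3(b^2-1)}{b^2}$. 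If the same-order moment really carried coefficient $1$, summing these $O(1)$ driving terms over the $\sim n$ recursion steps would give a bound of order $n\big(\widehat{\varrho}_{2}^{(n)}\big)^m$ and your induction would not close. What actually saves the argument --- and what the paper's proof uses --- is that only the $b^2$ diagonal terms of $L^{2m}$ reproduce $\widehat{\varrho}_{k-1,2m}^{(n)}$, so its coefficient is $b^{2}/b^{2m}=b^{2-2m}<1$ (e.g.\ $1/b$ for the third moment, $1/b^2$ for the fourth). The recursion is therefore a strict contraction in the top moment driven by an $O(1)$ multiple of $\big(\widehat{\varrho}_{k,2}^{(n)}\big)^m$, plus terms that are genuinely $O(1/\sqrt{n})$ from $Q$, $N$, and odd moments (which must be bounded first, as the paper does for $m=3$). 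It is this contraction, not $1/n$ factors, that yields $\widehat{\varrho}_{k,2m}^{(n)}\le C\big(\widehat{\varrho}_{k,2}^{(n)}\big)^m$ uniformly; your Rosenthal-type bookkeeping can be repaired to produce exactly that inequality, but as written the key step fails.
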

\begin{proof}
\noindent Part (\I):  This is a consequence of part (\I\I) of Lemma~\ref{LemTan} and the recursive relation
$$ \widehat{\varrho}_ {k+1}^{(n)}(\BA) \, = \,   M_{n}\Big(\widehat{\varrho}_ {k}^{(n)}(\BA)\Big)\, . \vspace{.2cm}  $$

\noindent Part (\I\I):   As a consequence of part (\I), $\widehat{\varrho}_ {k, 2}^{(n)}(\BA)$ is uniformly bounded for all $n>0$ and $ 0\leq k\leq n r$.   For $m=3$ it is straightforward to show that $\widehat{\varrho}_{k, 3}^{(n)}$ satisfies the recursive relation
\begin{align*}
       \widehat{\varrho}_{k+1,3}^{(n)}(\BA)\, = &\, \frac{1}{b}  \widehat{\varrho}_{k,3}^{(n)}(\BA) \, + \,\frac{1}{\sqrt{n}}\frac{3(b-1)}{b} \big[\widehat{\varrho}_{k,2}^{(n)}(\BA)\big]^{2}\,+\,\frac{1}{n} \frac{b-1}{b} \widehat{\varrho}_{k,2}^{(n)}(\BA) \widehat{\varrho}_{k,3}^{(n)}(\BA)\,,+\,   \frac{1}{n^{\frac{3}{2}}} \frac{(b-1)(b-2)}{b^2} \big[\widehat{\varrho}_{k,2}^{(n)}(\BA) \big]^{3} \\  & \,+\, \frac{1}{n^{\frac{3}{2}}} \frac{b-1}{2b^2} \big[\widehat{\varrho}_{k,3}^{(n)}(\BA) \big]^{2}+\frac{1}{n^{\frac{3}{2}} } \frac{\BA^3 (b-1)}{b^2}\mathbb{E}[\omega^3] \,.
\end{align*}
From this we see that the absolute value of $ \widehat{\varrho}_ {k, 3}^{(n)}(\BA)$ satisfies a recursive inequality of the form
\begin{align*}
  \big| \widehat{\varrho}_ {k+1, 3}^{(n)}(\BA) \big| \, \leq & \,   \frac{1}{b}\big| \widehat{\varrho}_ {k, 3}^{(n)}(\BA) \big|  +\,\frac{3}{\sqrt{n}}\frac{b-1}{b} \big[\widehat{\varrho}_{k,2}^{(n)}(\BA)\big]^{2}\,+\,    \frac{1}{n}\frac{b-1}{b } \widehat{\varrho}_{k,2}^{(n)}(\BA)\big| \widehat{\varrho}_ {k, 3}^{(n)}(\BA) \big|   \,+\,\frac{1}{n^{\frac{3}{2}} }\frac{\BA^3(b-1) }{b^2  }\mathbb{E}[\omega^3]\\
&+\,   \frac{1}{n^{\frac{3}{2}}}\frac{(b-1)(b-2)}{b^2 }\big[\widehat{\varrho}_{k,2}^{(n)}(\BA) \big]^{3}\,+\, \frac{1}{n^{\frac{3}{2}}}\frac{b-1}{2b^2 }\big[\widehat{\varrho}_{k,3}^{(n)}(\BA) \big]^{2}
\intertext{with $|\widehat{\varrho}_ {0, 3}^{(n)}(\BA)|=0      $.    Let $\widehat{k}\in \mathbb{N}$ be the first value such that $|\widehat{\varrho}_{\widehat{k},3}^{(n)}(\BA)|>1$.     There exists an $\epsilon\in (0,1)$ and  a $c>0$ such that for all $k\leq \min(\widehat{k}-1,nr) $ the above is   }
\leq & \, \epsilon   \big| \widehat{\varrho}_ {k, 3}^{(n)}(\BA) \big| \, +\, \frac{c}{\sqrt{n}}\, .
\end{align*}
It follows that   $\displaystyle \sup_{0\leq k\leq \lfloor rn   \rfloor   }\big|\widehat{\varrho}_ {k+1, 3}^{(n)}(\BA)\big|$ is $\mathit{O}(\frac{1}{\sqrt{n}})$ and, in particular, bounded for all $n\gg 1$.

Similar reasoning holds for $m=4$. From the recursive equality for $\widehat{\varrho}_{k, 4}^{(n)}$, which is straightforward to derive but lengthy to write out, we have an inequality for  $\widehat{\varrho}_ {k, 4}^{(n)}(\BA) $ of the form
\begin{align*}
  \widehat{\varrho}_ {k+1, 4}^{(n)}(\BA) \, \leq  \, \epsilon  \widehat{\varrho}_ {k, 4}^{(n)}(\BA)\,+\,\frac{6(b^2-1)}{b^2} \big[\widehat{\varrho}_{k,2}^{(n)}(\BA)\big]^{2}\,+\,\frac{c}{n}
\end{align*}
for some   $\epsilon\in (0,1)$ and $c>0$.   It follows that $\displaystyle \big|\widehat{\varrho}_ {\lfloor rn   \rfloor, 4}^{(n)}(\BA)\big|$ is bounded by a multiple of $\big[\widehat{\varrho}_{ \lfloor rn   \rfloor,2}^{(n)}(\BA)\big]^{2}$.  The reasoning can be extended inductively to arbitrary moments.

\vspace{.2cm}

\noindent Part (\I\I\I): This follows from the recursive formula in Lemma \ref{LemStochRecur}.

\end{proof}

\begin{theorem}\label{CleanProcess}
For $\beta\in (0,\kappa_{b})$ and $n\in \mathbb{N}$, define the stochastic process
\begin{align*}
\hspace{2cm}Y_{r}^{(n)}(\BA)   \, := \,     \widehat{R}_{\lfloor r n \rfloor ,n}( \BA )\,, \hspace{1cm}r\in [0,1]\, .
\end{align*}
As  $n\rightarrow\infty$,  $\big(Y_{r}^{(n)}(\BA)\big)_{r\in [0,1]} $ converges in law to a continuous Gaussian  process $\big(\mathbf{y}_{r}(\BA)\big)_{r\in [0,1]} $ with independent, mean zero increments and
$$  \mathbf{y}_{r}(\BA)   \, \stackrel{d}{=} \,  \mathcal{N}\bigg( 0,\, \frac{\BA\sqrt{2}}{\sqrt{b}}\tan\Big(\BA\frac{b-1}{\sqrt{2b}} r  \Big) \bigg)\,.       $$
\end{theorem}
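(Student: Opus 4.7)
The plan is to prove Theorem~\ref{CleanProcess} by combining convergence of finite-dimensional distributions with tightness in $C([0,1])$, so that weak convergence of the full process follows from Prokhorov's theorem. The target limit is the continuous centered Gaussian process with independent increments and variance $\phi_{b}(r)=\frac{\BA\sqrt{2}}{\sqrt{b}}\tan\bigl(\frac{\BA(b-1)}{\sqrt{2b}}r\bigr)$, and by Lemma~\ref{LemVar}(i) the one-dimensional marginals already have the right limiting variance; the remaining task is to upgrade to Gaussianity and to certify the independent-increments covariance.

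For finite-dimensional convergence, Lemma~\ref{LemStochRecur} decomposes the increment $\Delta_k^{(n)}:=\widehat{R}_{k+1,n}(\BA)-\widehat{R}_{k,n}(\BA)$ into a fresh-noise part that is linear in the new $\omega$-variables of generation $n-k$, and a quadratic cross-product part $Q_k^{(n)}$ built from level-$k$ copies of $\widehat{R}_n(\BA;\cdot)$. Lemma~\ref{LemVar}(iii) gives uncorrelatedness across $k$; to upgrade this to joint Gaussianity I would apply a martingale central limit theorem in a filtration that reveals the disorder one generation at a time. Under $\mathcal{F}_k:=\sigma(\omega_a: a\in V_j,\, j\geq n-k+1)$ the noise part is already a martingale difference, and after subtracting the $\mathcal{F}_k$-conditional mean of $Q_k^{(n)}$ (its Doob drift) the whole increment becomes a martingale difference. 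I would then verify the standard hypotheses: the cumulative conditional quadratic variation $\sum_{k=\lfloor nr\rfloor}^{\lfloor nr'\rfloor} \mathbb{E}\bigl[(\Delta_k^{(n)})^2\mid\mathcal{F}_k\bigr]$ converges in probability to $\phi_b(r')-\phi_b(r)$, and a conditional Lindeberg condition, the latter being immediate from the $O(n^{-1/2})$ size of individual increments given by Lemma~\ref{LemVar}(ii).

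For tightness I would apply the Kolmogorov--Chentsov criterion: using Lemma~\ref{LemVar}(ii) with $m=2$, and the Lipschitz dependence of $k\mapsto \widehat{\varrho}_k^{(n)}(\BA)$ on $k/n$ coming from the recursion of Lemma~\ref{This}, one gets $\mathbb{E}\bigl[(Y_{r'}^{(n)}-Y_r^{(n)})^4\bigr]\leq C(r'-r)^2$ uniformly in $n$, which yields tightness and forces continuity of the limit paths.

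The main obstacle is verifying convergence of the conditional quadratic variation. The quadratic drift $Q_k^{(n)}$ contributes $\frac{b-1}{2bn}\bigl[\widehat{\varrho}_k^{(n)}(\BA)\bigr]^2$ to the variance in expectation (this is precisely the nonlinear term in the recursion of Lemma~\ref{This}), and one must show that $\frac{1}{b^{n-k}}\sum_{g\in G_{k,n}} \widehat{R}_n(\BA;g)^2$ self-averages to this deterministic mean. The higher-moment bounds of Lemma~\ref{LemVar}(ii), combined with the independence of $\widehat{R}_n(\BA;g)$ across disjoint subgraphs $g\in G_{k,n}$, should make the variance of this empirical second moment of order $|G_{k,n}|^{-1}\to 0$; this closes the loop and lets the martingale CLT deliver the desired Gaussian limit with the stated variance.
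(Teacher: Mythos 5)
Your route to the finite-dimensional Gaussianity has a genuine gap at the martingale CLT step. With your filtration $\mathcal{F}_k=\sigma(\omega_a:a\in V_j,\ j\geq n-k+1)$, every factor $\widehat{R}_n(\BA;g')$, $g'\in G_{k,n}$, appearing in the quadratic part $Q_k^{(n)}$ of the increment $\Delta_k^{(n)}$ is already $\mathcal{F}_k$-measurable, so $\mathbb{E}\bigl[\Delta_k^{(n)}\mid\mathcal{F}_k\bigr]=Q_k^{(n)}$ and "subtracting the conditional mean of $Q_k^{(n)}$" means removing $Q_k^{(n)}$ entirely. The compensated martingale differences are then just the fresh-noise terms, whose summed conditional variances converge to the \emph{linear} quantity $\frac{\BA^2(b-1)}{b}(r'-r)$, not to $\phi_b(r')-\phi_b(r)$; meanwhile the accumulated drift $\sum_k Q_k^{(n)}$ is a random variable of order one (each $Q_k^{(n)}$ has variance of order $\widehat{\varrho}_k^{(n)\,2}/n$, so the drift carries exactly the nonlinear part of the variance that produces the tangent). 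Your verification step applies the martingale CLT to the uncompensated array, for which $\sum_k\mathbb{E}\bigl[(\Delta_k^{(n)})^2\mid\mathcal{F}_k\bigr]$ does tend to $\phi_b(r')-\phi_b(r)$, but that array is not a martingale-difference array, so the theorem does not apply; and for the array that is a martingale, the limit variance is wrong. Self-averaging of $b^{-(n-k)}\sum_{g}\widehat{R}_n(\BA;g)^2$, which you correctly identify, does not repair this: it controls the size of the drift, not its distribution or its joint law with the noise part.

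The paper avoids this entirely by exploiting a structural fact you never use for the Gaussianity itself: for $k=\lfloor rn\rfloor$, the variable $\widehat{R}_{k,n}(\BA)=b^{-(n-k)}\sum_{g\in G_{k,n}}\widehat{R}_n(\BA;g)$ is exactly a $1/\sqrt{N}$-normalized sum of $N=b^{2(n-k)}$ i.i.d.\ mean-zero blocks (disjoint copies of $D_k$), each with variance $\widehat{\varrho}_k^{(n)}(\BA)\to\phi_b(r)$, so the classical Lindeberg--Feller theorem applies directly, with the Lindeberg condition checked via the uniform fourth-moment bound of Lemma~\ref{LemVar}(ii); the point $r=1$, where the number of blocks degenerates to one, is handled separately by comparing with $\widehat{R}_{n-u_n,n}$ in $L^2$ using the uncorrelated-increment identity. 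If you want to salvage a martingale argument, the filtration should reveal the level-$k$ blocks one at a time (which reduces to the i.i.d.\ CLT), not the generations one at a time. A secondary caveat: your Kolmogorov--Chentsov bound $\mathbb{E}\bigl[(Y_{r'}^{(n)}-Y_r^{(n)})^4\bigr]\leq C(r'-r)^2$ needs the fourth moment of a single block increment to scale like $(\widehat{\varrho}_{k'}-\widehat{\varrho}_{k})^2$, which is not supplied by Lemma~\ref{LemVar}(ii) as stated and is delicate near $r=1$ where the block count is small (the paper does not prove tightness at all, so this part is an addition rather than a divergence).
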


\begin{remark}
The limit process $\big(\mathbf{y}_{r}(\BA)\big)_{r\in [0,1]} $  is simply a deterministic time-change of a standard Brownian motion $(\mathbf{B}_{t})_{t\geq 0}$
$$ \mathbf{y}_{r}(\BA)   \, \stackrel{d}{=} \, \mathbf{B}_{\tau_{r}},  $$
where $\ds \tau_{r}:= \frac{\BA\sqrt{2}}{\sqrt{b}}\tan\big(\BA\frac{b-1}{\sqrt{2b}} r  \big)   $.

\end{remark}

\begin{proof}[Proof of Theorem \ref{CleanProcess}] First we will focus on finite-dimensional distributional convergence.  For all $n\in \mathbb{N}$ and $0\leq  r\leq s \leq 1$,
\begin{align}    \mathbb{E}\big[ Y_{r}^{(n)}(\BA) \big]\,=\,0  \hspace{1cm}\text{and}\hspace{1cm}  \mathbb{E}\Big[ Y_{r}^{(n)}(\BA)\Big(Y_{s}^{(n)}(\BA)\,-\, Y_{r}^{(n)}(\BA)    \Big)\Big] \, =\,0  \,,
\end{align}
 where the second equality follows from part (\I\I\I) of Lemma~\ref{LemVar}.   Thus finite-dimensional convergence is implied by the convergence of the variances $ \mathbb{E}\big[|Y_{r}^{(n)}(\BA)|^{2}\big]$.

Next we show that $Y_{r}^{(n)}(\BA)$ converges to a normal distribution  for $r\in [0,1)$. By definition of $ \widehat{R}_{k,n}(\BA ) $, we can write $ Y_{r}^{(n)}(\BA) $ as
\begin{align}\label{Gabel}
\text{}\hspace{2cm} Y_{r}^{(n)}(\BA)\, =& \,\widehat{R}_{\lfloor r n \rfloor ,n}(\BA )  \, = \,\underbrace{ \frac{ 1}{b^{n-\lfloor r n \rfloor }} \sum_{ g\in G_{\lfloor r n \rfloor ,\,n}   }  \widehat{R}_{n}\big(\BA ; g\big)   } \,. \\   &\hspace{.7cm} \textbf{Lindeberg-Feller central limit-type sum} \nonumber
\end{align}
However, the random variables $\widehat{R}_{n}\big(\BA ; g\big)  $ are i.i.d.\ with mean zero and variance  $ \widehat{\varrho}_ {\lfloor nr \rfloor}^{(n)}(\BA)$.  Recall that $ \widehat{\varrho}_ {\lfloor nr \rfloor}^{(n)}(\BA)$ converges to  $\frac{\BA\sqrt{2}}{\sqrt{b}}\tan\big(\BA\frac{b-1}{\sqrt{2b}} r  \big)$ as $n\rightarrow \infty$ by Lemma~\ref{LemVar}.      Since~ $|G_{\lfloor r n \rfloor ,\,n}|=b^{2(n-\lfloor r n \rfloor) } $ and by the above observation,~(\ref{Gabel}) converges to a normal distribution with mean zero and variance $\frac{\BA\sqrt{2}}{\sqrt{b}}\tan\big(\BA\frac{b-1}{\sqrt{2b}} r  \big)$ provided that the following Lindeberg condition is satisfied: as $n\rightarrow \infty$ for any fixed $\epsilon>0$
\begin{align*}
\frac{ 1}{b^{2n-2\lfloor r n \rfloor }}\Bigg[ \sum_{ g\in G_{\lfloor r n \rfloor ,n}   } \mathbb{E}\bigg[  \big|  \widehat{R}_{n}\big(\BA ; g\big)\big|^2 \chi\Big( \big|\widehat{R}_{n}\big(\BA ; g\big)\big| > \epsilon b^{n-\lfloor r n \rfloor } \Big)   \bigg] \Bigg]\quad \longrightarrow \quad 0\, .
\end{align*}
However, by Chebyshev, the expression above is bounded by
\begin{align}\label{Pimpernel}
\frac{ 1}{\epsilon^2 b^{4n-4\lfloor r n \rfloor }} \sum_{ g\in G_{\lfloor r n \rfloor ,n}   } \mathbb{E}\Big[  \big|  \widehat{R}_{n}\big(\BA ; g\big)\big|^4  \Big] \,  = \,   \frac{ 1}{\epsilon^2 b^{2n-2\lfloor r n \rfloor }}\widehat{\varrho}_ {\lfloor nr \rfloor, 4}^{(n)}(\BA)\, \leq \,\frac{ C}{\epsilon^2 b^{2n-2\lfloor r n \rfloor }} \, ,
\end{align}
where the second inequality holds for some $C>0$ and  all $n>0$ by part (\I\I) of Lemma~\ref{LemVar}.  The right side of~(\ref{Pimpernel}) converges to zero, so the Lindeberg condition holds.

When $r=1$, the sum~(\ref{Gabel}) contains only one term and thus does not immediately fall within the purview of Lindeberg-Feller. Let $(u_{n})_{n\geq 0}$ be a non-decreasing sequence of integers with $0\leq  u_{n}\leq n$, $u_{n}\rightarrow \infty$, and $\frac{u_{n}}{n}\rightarrow 0$.  Define  $\widehat{Y}_{1}^{(n)}(\BA):=\widehat{R}_{ n-u_{n} ,n}(\BA ) $.   By writing
\begin{align}\label{ReSplit}
 Y_{1}^{(n)}(\BA) \,   = \, \widehat{Y}_{1}^{(n)}(\BA)\,+\,\Big( Y_{1}^{(n)}(\BA) \,  -\, \widehat{Y}_{1}^{(n)}(\BA)   \Big),
\end{align}
it is sufficient for us to show that
\begin{enumerate}[I)]
\item  $\widehat{Y}_{1}^{(n)}(\BA)$ converges to the normal distribution $\ds\mathcal{N}\Big(0,\frac{\BA\sqrt{2}}{\sqrt{b}}\tan\big(\BA\frac{b-1}{\sqrt{2b}}   \big)\Big)$.

\item  $ Y_{1}^{(n)}(\BA)   -\widehat{Y}_{1}^{(n)}(\BA)  $ converges in probability to zero.
\end{enumerate}
The proof of  I) follows analogously to the case of $r<1$ above with $\lfloor r n\rfloor $ replaced by $n-u_{n}$.  For II), we observe that
the expectation of the square of the right side of (\ref{ReSplit}) is
\begin{align*}
\mathbb{E}\bigg[\Big|   Y_{1}^{(n)}(\BA) \,  -\, \widehat{Y}_{1}^{(n)}(\BA)   \Big|^{2}\bigg] \,= &\,n  \mathbb{E}\bigg[\Big|   \widehat{R}_{ n ,n}(\BA )  \,  -\, \widehat{R}_{ n-u_{n} ,n}(\BA )     \Big|^{2}\bigg]\, \nonumber  \\  =& \,\widehat{\varrho}_{n}^{(n)}(\BA)\,  - \, \widehat{\varrho}_{n-u_{n}}^{(n)}(\BA) \, \nonumber  \\  =& \,  \mathit{O}\Big(\frac{u_{n} }{n}\Big)\, .
\end{align*}
The second equality holds because $\widehat{R}_{ n ,n}(\BA )  \,  -\, \widehat{R}_{ n-u_{n} ,n}(\BA )   $ and $\widehat{R}_{ n-u_{n} ,n}(\BA )  $ are uncorrelated, and   the order equality follows from part (\I\I) of Lemma \ref{LemVar} since $\widehat{\varrho}_{n}^{(m)}(\BA)=\widehat{M}_{n}^{m}(0)$.

\end{proof}

\begin{lemma}\label{LemZ2Z}
There is a $C>0$ such that for all $n\geq 1$
\begin{align*}
\mathbb{E}\Big[ \big|R_{n}(\BA )\,-\,\widehat{R}_{n}(\BA )\big|^{2}\Big] \, \leq \, \, \frac{C}{n }\, .
\end{align*}

\end{lemma}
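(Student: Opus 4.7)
Let me outline a plan. Set $D_n(\widehat{\beta}; g) := R_n(\widehat{\beta}; g) - \widehat{R}_n(\widehat{\beta}; g)$ for $g \in G_{k,n}$ and $\delta_k^{(n)} := \mathbb{E}[D_n(\widehat{\beta}; g)^2]$, which depends only on $k$ by symmetry and satisfies $\delta_0^{(n)} = 0$. The goal is to show $\delta_n^{(n)} = O(1/n)$, and the natural route is a Gronwall-type recursion of the form $\delta_{k+1}^{(n)} \le (1 + C/n)\,\delta_k^{(n)} + C'/n^2$, which iterates over $n$ generations to the desired bound.

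The first step is to write the full recursion~(\ref{Full}) as an explicit Taylor expansion. Setting $\epsilon_j := R_n(\widehat{\beta}; g\TS(i,j))/\sqrt{n}$ and $E_{i,j} = 1 + (\widehat{\beta}/n)\omega_{g\diamond(i,j)} + \mathcal{O}(1/n^2)$, multiplying out $\prod_j (1+\epsilon_j)\prod_{j'} E_{i,j'}$ and scaling by $\sqrt{n}$, one recovers the three terms present in~(\ref{PartialII}) plus a remainder $\mathcal{E}_n(g)$ whose summands are either (a) triple-or-higher products of the $R$-variables carrying a factor $1/n$ or smaller, (b) mixed terms of the shape $(\widehat{\beta}/n)\, R \cdot \omega$, or (c) pure disorder corrections of order $1/n^2$ from $\lambda(\widehat{\beta}/n) - \widehat{\beta}^2/(2n^2)$ and from $e^{(\widehat{\beta}/n)\omega}$ at higher order. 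Subtracting~(\ref{PartialII}) then yields the recursion
\begin{align*}
D_n(\widehat{\beta}; g) \;=\; \frac{1}{b}\sum_{i,j} D_n\big(\widehat{\beta}; g\TS(i,j)\big) \;+\; \frac{1}{b\sqrt{n}}\sum_i\sum_{j_1<j_2}\Big(D_{i,j_1} R_{i,j_2} + \widehat{R}_{i,j_1} D_{i,j_2}\Big) \;+\; \mathcal{E}_n(g),
\end{align*}
where I have rewritten $R R - \widehat{R}\widehat{R} = DR + \widehat{R}D$ in the quadratic difference.

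The second step is to square and take expectations, exploiting (i) independence across distinct $(i,j)$ indices, (ii) independence of the $\omega_{g\diamond(i,j)}$ from everything in prior generations, and (iii) the fact that $\mathcal{E}_n$ is centered when conditioned on the previous generation. The leading linear propagation contributes exactly $\delta_k^{(n)}$ (variance of $b^2$ i.i.d.\ copies divided by $b^2$). The quadratic difference, squared, contributes a term of order $\tfrac{1}{n}\,\delta_k^{(n)}\cdot \mathbb{E}[R^2] + \tfrac{1}{n}\,\mathbb{E}[\widehat{R}^2]\cdot \delta_k^{(n)}$, which is $\le C \delta_k^{(n)}/n$ provided we know that $\mathbb{E}[R_n(\widehat{\beta}; g)^2]$ is uniformly bounded — this follows from the convergence $\varrho_k^{(n)}\to$ the same tangent limit, via Lemma~\ref{LemTan}(III) applied to the maps $M_n$ and $\widehat M_n$. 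The cross term between the linear part and the quadratic difference contributes, after Cauchy--Schwarz, another $C\delta_k^{(n)}/n$. Finally, $\mathbb{E}[\mathcal{E}_n^2]$ is $O(1/n^2)$: each summand involves at least two factors from $\{R/\sqrt{n}, \widehat{\beta}\omega/\sqrt{n}\}$ beyond what~(\ref{PartialII}) already contains, so using the uniform $L^4, L^6$ bounds on $\widehat{R}$ from Lemma~\ref{LemVar}(II) (extended to $R$ by the same inductive argument, or by bootstrapping once a weak bound on $\delta$ is in hand) produces the claimed order.

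Assembling these estimates gives $\delta_{k+1}^{(n)} \le (1+C/n)\,\delta_k^{(n)} + C'/n^2$, whence $\delta_n^{(n)} \le \tfrac{C'}{n^2}\sum_{k=0}^{n-1}(1+C/n)^{n-1-k} \le C''/n$. The main obstacle I anticipate is the bookkeeping in step two — specifically, checking that the quadratic cross term $\tfrac{1}{\sqrt{n}}(DR + \widehat R D)$, which is only of order $1/\sqrt{n}$ and not $1/n$, nevertheless contributes only a multiplicative factor $(1 + C/n)$ on $\delta_k^{(n)}$ (and not $(1 + C/\sqrt{n})$, which would be fatal). This works because when one squares, the orthogonality of independent subtree copies kills the $(1/\sqrt{n})^1$ order cross contributions with the linear term, leaving only genuine $(1/\sqrt{n})^2 = 1/n$ contributions. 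A secondary obstacle is that uniform moment bounds on $R$ (as opposed to $\widehat{R}$) need to be in place before running the Gronwall argument; this is handled by noting that the proof of Lemma~\ref{LemVar}(II) carries over to $R$ with only cosmetic changes, since the extra $\mathcal{E}_n$ terms in the recursion for $R$ are themselves $O(1/\sqrt n)$ in $L^2$ and cannot destabilize a bounded-moment induction on $[0, r]$ for $r < 1$.
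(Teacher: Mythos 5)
Your Gronwall-style argument can be made to work, but it is a genuinely different and much heavier route than the paper's. The paper's proof observes that $R_{n}(\BA)-\widehat{R}_{n}(\BA)$ is \emph{uncorrelated} with $\widehat{R}_{n}(\BA)$ (this is checked by interpolating between the two recursions through intermediate variables $\BR_{n}^{(\ell)}(\BA)$, $0\leq \ell\leq n$, whose successive increments are pairwise uncorrelated), so that
\begin{align*}
\mathbb{E}\Big[\big|R_{n}(\BA)-\widehat{R}_{n}(\BA)\big|^{2}\Big]\,=\,\mathbb{E}\big[R_{n}^{2}(\BA)\big]-\mathbb{E}\big[\widehat{R}_{n}^{2}(\BA)\big]\,=\,M_{n}^{n}(0)-\widehat{M}_{n}^{n}(0)\,,
\end{align*}
and the $O(1/n)$ bound is then precisely part (iii) of Lemma~\ref{LemTan}: the whole stochastic problem collapses onto the deterministic comparison of the two variance maps, the very estimate you invoke only to get boundedness of $\mathbb{E}[R^2]$. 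No generation-by-generation error propagation, no expansion of the remainder, and no higher-moment control are needed. What your approach buys is a self-contained $L^{2}$ recursion for the difference that does not rely on the exact second-moment identity, and so would transfer to settings where the difference fails to be orthogonal to $\widehat{R}_{n}$; the price is all the bookkeeping you describe.

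One justification in your step two needs repair: $\mathcal{E}_{n}(g)$ is \emph{not} centered conditionally on the previous generation, since the third- and higher-order products of the $R$-variables are conditionally deterministic rather than conditionally mean zero. The cross term with the linear part $\frac{1}{b}\sum_{i,j}D_{n}(\BA;g\TS(i,j))$ nevertheless vanishes exactly, for the same reason you give for the other cross term: every summand of $\mathcal{E}_{n}(g)$ contains either at least three distinct mean-zero $R$-factors from a single branch or a mean-zero disorder factor independent of everything else, so its product with any single $D_{n}(\BA;g\TS(i,j))$ has zero expectation. This exact vanishing is essential, not cosmetic: a plain Cauchy--Schwarz bound on that cross term costs $O(\sqrt{\delta_{k}^{(n)}}/n)$ per step and would only yield $O(n^{-1/2})$ at the end, not $O(1/n)$. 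Finally, since no $R$-factor is repeated within a single summand of $\mathcal{E}_{n}(g)$, bounding $\mathbb{E}[\mathcal{E}_{n}^{2}]$ requires only uniform second moments of $R$ (supplied by Lemma~\ref{LemTan}(iii) for $\BA<\kappa_{b}$) together with moments of $\omega$; the $L^{4}$ and $L^{6}$ bounds you plan to import from Lemma~\ref{LemVar}(II) are not actually needed.
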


\begin{proof}The random variables  $R_{n}(\BA)-\widehat{R}_{n}(\BA )$ and
$\widehat{R}_{n}(\BA )$ are uncorrelated.  This can be seen, for instance, by  defining a series of intermediary random variables $\BR_{n}^{(\ell)}(\BA ) $ between $R_{n}(\BA)$ and $\widehat{R}_{n}(\BA )$ indexed by $\ell\in [0,n]$ such that $\BR_{n}^{(\ell)}(\BA ) :=\BR_{n}^{(\ell)}(\BA; g )$ for $g=D_{n}$, where the family of random variables $\{\BR_{n}^{(\ell)}(\BA; g )\}_{g\in G_{k,n}}$   satisfies the recursive relation~(\ref{Full}) of $R_{n}(\BA; g)$ for $g\in G_{k,n}$ with  $k<\ell$ and the recursive relation~(\ref{Partial})  of $\widehat{R}_{n}(\BA; g)$ for $g\in G_{k,n}$ with $k\geq \ell $.  By this construction, $\widehat{R}_{n}(\BA)=\BR_{n}^{(0)}(\BA ) $ and  $R_{n}(\BA)=\BR_{n}^{(n)}(\BA ) $ and the increments $\BR_{n}^{(\ell+1)}(\BA ) -\BR_{n}^{(\ell)}(\BA )$ are pairwise uncorrelated.

Since   $R_{n}(\BA)-\widehat{R}_{n}(\BA )$  and
$\widehat{R}_{n}(\BA )$ are uncorrelated, we have the first equality below:
\begin{align}
\mathbb{E}\bigg[ \Big(R_{n}(\BA )\,-\,\widehat{R}_{n}(\BA)\Big)^{2}\bigg]   \, = \,& \mathbb{E}\big[ R_{n}^2(\BA )\big]  \,-\,\mathbb{E}\big[ \widehat{R}_{n}^2(\BA )\big]  \nonumber \\
\, := \,& \varrho_{n}(\BA)\,-\,\widehat{\varrho}_{n}(\BA) \nonumber \\
\, = \,&M_{n}^{n}(0)\,-\,\widehat{M}_{n}^{n}(0)\, =\,\mathit{O}\big(1/n\big)\,.\label{Blah}
\end{align}
The order equality follows from part (\I\I\I) of  Lemma~\ref{LemTan}.

\end{proof}

\begin{proof}[Proof of  Theorem~\ref{ThmMain} in the $b=s$ and  $\beta<\kappa_{b} $ case]
We can write
\begin{align}\label{Split}
R_{n}(\BA) \, = \, \widehat{R}_{n}(\BA)\, + \, \big( R_{n}(\BA) \,-\,\widehat{R}_{n}(\BA)\big),
\end{align}
where the rightmost term converges to zero in probability  as a consequence of Lemma~\ref{LemZ2Z}.  Moreover, $\widehat{R}_{n}(\BA)$  converges in distribution to $\mathcal{N}\big(0, \upsilon_{b}(\beta)  \big)$ for $ \upsilon_{b}(\beta) :=\frac{\BA\sqrt{2}}{\sqrt{b}}\tan\big(\BA\frac{b-1}{\sqrt{2b}}   \big)$  by Theorem~\ref{CleanProcess}, which completes the proof.

\end{proof}

\subsection{The case of $b=s$ and $\BA= \kappa_{b}$}

We will need slightly more refined estimates to treat the critical point case,  $\BA= \kappa_{b}$,  than was required  for the analysis of $\BA<\kappa_{b}$.  Recall that we defined the family of random variables $\widehat{R}_{n}(\BA; g)$ through a recursive formula~(\ref{PartialII})  that was essentially a quadratic approximation of the recursive formula~(\ref{Full}) satisfied by the family of random variables $R_{n}(\BA; g):=\sqrt{n}(W_{n}(\BA; g)-1)$. At the critical point, a third-order term in the recursive formula for $R_{n}(\BA; g)$  becomes nonnegligible, making it convenient to formulate the random variables  $\widetilde{R}_{n}(\BA; g)$ in the definition below that serve the same purpose as $\widehat{R}_{n}(\BA; g)$ last section.

\begin{definition}
For $g\in G_{k,n}  $   we define $\widetilde{R}_{n}(\BA; g)$ to be the solution to the  recursive relation
\begin{align}\label{PartialIII}
\widetilde{R}_{n}&(\BA; g)\, := \,\, \frac{1}{b}\sum_{i=1}^{b}\sum_{1\leq  j\leq b} \widetilde{R}_{n}\big(\BA; g\TS (i,j)\big)  \, +\, \frac{1}{b\sqrt{n}}\sum_{i=1}^{b} \sum_{1\leq j_{1}<j_{2}\leq b }\prod_{k=1,2} \widetilde{R}_{n}\big(\BA; g\TS (i,j_{k})\big)\nonumber  \\  & \, +\, \frac{1}{bn}\sum_{i=1}^{b} \sum_{1\leq j_{1}<j_{2}<j_{3}\leq b }\prod_{k=1,2,3} \widetilde{R}_{n}\big(\BA; g\TS (i,j_{k})\big) \, +\,  \frac{\BA}{b\sqrt{n}}\sum_{i=1}^{b} \sum_{1\leq j \leq b-1}\omega_{g\diamond (i,j)}
\end{align}
with initial condition is $\widetilde{R}_{n}(\BA; g)=0$ for $g\in G_{0,n}$.

\end{definition}

 In addition, we define $\widetilde{W}_{n}(\BA; g)$,  $  \widetilde{R}_{k,n}(\BA)$, $\widetilde{M}_{n}(x) $, $\widetilde{\varrho}_{k,p}^{(n)}(\BA) $ in analogy to  $\widehat{W}_{n}(\BA; g)$,  $  \widehat{R}_{k,n}(\BA)$, $\widehat{M}_{n}(x) $, $\widehat{\varrho}_{k,p}^{(n)}(\BA) $ from the last section.

\begin{definition} Let $0\leq k\leq n$ and $g\in G_{k,n}  $.

\begin{itemize}
\item $\widetilde{W}_{n}(\BA; g)\,:=\,1\,+\,\frac{1}{\sqrt{n}}\widetilde{R}_{n}(\BA; g) $

\item $  \widetilde{R}_{k,n}(\BA)\, :=\,\frac{1}{b^{n-k}}\sum_{ g\in G_{k,n}}\widetilde{R}_{n}(\BA; g)$

\item  $\widetilde{M}_{n}(x)\,:=\,x\,+\,\frac{b-1}{2n}x^2\,+\,\frac{(b-1)(b-2)}{6n^2}x^3 \,+\,\frac{\BA^2(b-1)}{bn}$

\item $ \widetilde{\varrho}_{k,p}^{(n)}(\BA) := \mathbb{E}\big[\big(\widetilde{R}_{n}(\BA; g)\big)^p \big] $

\end{itemize}
When $p=2$ we identify $\widetilde{\varrho}_{k,2}^{(n)}(\BA)\equiv  \widetilde{\varrho}_{k}^{(n)}(\BA) $.

\end{definition}

\begin{remark}In analogy with~(\ref{RemarkMaps}), the maps $\widetilde{M}_{n}$ satisfy
\begin{align}\label{Zag}
 \widetilde{\varrho}_{k+1}^{(n)}(\BA)\, =\,  \widetilde{M}_{n}\big( \widetilde{\varrho}_{k}^{(n)}(\BA) \big)\, .
\end{align}
The random variables  $\widetilde{R}_{k,n}(\BA)$  have uncorrelated increments in the index $k\in \mathbb{N}$  and satisfy $\widetilde{R}_{n,n}(\BA)=\widetilde{R}_{n}(\BA; D_{n})$.

\end{remark}

Since the variance of $\widetilde{W}_{n}(  \kappa_{b}/ n; D_{n}\big) $ is $\widetilde{M}_{n}^{n}(0)$, the following lemma requires an analysis of $n$-fold compositions of the map $\widetilde{M}$.   The proof is in Section~\ref{SecProofsTwo}.

\begin{lemma} \label{LemRT} As $n\rightarrow \infty$ the variance of the random variable $ \widetilde{W}_{n}\big(\kappa_{b}/n; D_{n}\big)  $ has the convergence
$$ \log(n)\textup{Var}\bigg( \widetilde{W}_{n}\Big( \frac{ \kappa_{b}}{n};D_{n}\Big)  \bigg)\,\quad  \longrightarrow \, \quad
 \frac{6}{b+1} \, .  $$

\end{lemma}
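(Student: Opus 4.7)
The plan is to reduce the statement to an asymptotic analysis of iterates of the deterministic map $\widetilde{M}_n$. From the definition $\widetilde{W}_n(\BA;g)=1+\frac{1}{\sqrt{n}}\widetilde{R}_n(\BA;g)$ together with~(\ref{Zag}),
\begin{align*}
\textup{Var}\bigg(\widetilde{W}_n\Big(\frac{\kappa_b}{n};D_n\Big)\bigg) \;=\; \frac{1}{n}\widetilde{M}_n^n(0),
\end{align*}
so the claim is equivalent to $\widetilde{M}_n^n(0)\sim \tfrac{6n}{(b+1)\log n}$. Because the quadratic part of $\widetilde{M}_n$ is the Euler step for an ODE that blows up at time $\tau=1$ exactly when $\BA=\kappa_b$, I would pass to the reciprocal $z_k:=n/\widetilde{M}_n^k(0)$, which starts at $+\infty$ and decreases toward $0$.

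A direct expansion of $\widetilde{M}_n(n/z_k)=n/z_{k+1}$, using the identity $\kappa_b^2(b-1)/b=\pi^2/(2(b-1))$, produces the recursion
\begin{align*}
z_{k+1}-z_k \;=\; -\frac{b-1}{2} \,+\, \frac{(b-1)(b+1)}{12\,z_k} \,-\, \frac{\pi^2 z_k^2}{2(b-1)\,n^2} \,+\, \mathcal{E}_k,
\end{align*}
with remainder $\mathcal{E}_k = O(z_k^{-2}) + O(z_k^4/n^4)$. The drift together with the third term is precisely the Euler discretisation of the ODE $w'(\tau)=-\tfrac{b-1}{2}-\tfrac{\pi^2 w^2}{2(b-1)}$ with $w=z/n$, solved explicitly by $w(\tau)=\frac{b-1}{\pi}\cot(\pi\tau/2)$, which satisfies $w(0)=+\infty$ and vanishes at $\tau=1$. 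The middle term $\frac{(b-1)(b+1)}{12z_k}$ is a strictly positive, purely discrete correction with no continuum analogue and is what prevents $z_n$ from collapsing to zero.

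Telescoping the recursion from an intermediate time $k_0$ up to $n-1$, the sum of the drift and quadratic terms equals $-z_{k_0}+O(1)$ because the continuum flow drives $w$ from $z_{k_0}/n$ down to $w(1)=0$; this cancels the $z_{k_0}$ in the identity $z_n=z_{k_0}+\sum_{k=k_0}^{n-1}(z_{k+1}-z_k)$ and leaves
\begin{align*}
z_n \;=\; \frac{(b-1)(b+1)}{12}\sum_{k=k_0}^{n-1}\frac{1}{z_k} \,+\, O(1).
\end{align*}
Substituting the leading asymptotic $z_k\approx n\cdot\frac{b-1}{\pi}\cot(\pi k/(2n))$ and evaluating the truncated integral $\int_0^{1-1/n}\tan(\pi\tau/2)\,d\tau = \frac{2}{\pi}\log n + O(1)$ gives $\sum_{k=k_0}^{n-1}z_k^{-1} = \frac{2\log n}{b-1} + O(1)$, independently of $k_0$, and therefore $z_n = \frac{(b+1)\log n}{6}(1+o(1))$. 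Inverting then delivers the lemma.

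The hard part will be making the telescoped cancellation and the approximation $z_k\approx n\cdot\frac{b-1}{\pi}\cot(\pi k/(2n))$ tight to precision $o(\log n)$. Three ingredients are needed: (i) a quantitative comparison of $z_k$ with its ODE analogue in the bulk, in the spirit of Lemma~\ref{LemTan} but pushed through the near-blow-up region; (ii) an a priori lower bound $z_k\gtrsim (n-k)\vee 1$ uniformly in $n$, both to justify the Taylor expansion of $\widetilde{M}_n(n/z_k)$ and to keep the remainder $\sum_k \mathcal{E}_k$ bounded; and (iii) control of the boundary layer $k=n-O(1)$ where $z_k$ drops to $O(\log n)$ and the expansion starts to fail. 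A clean way to organise this is to split $[0,n]$ into a bulk $[0,k_0]$ and a boundary $[k_0,n]$ with $k_0=n-L(n)$ for an $L(n)\to\infty$ chosen to make both regimes tractable, and to handle each separately.
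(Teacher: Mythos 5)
Your change of variables $z_k:=n/\widetilde{M}_n^k(0)$ is in spirit the same move as the paper's: the paper's coordinate is $r_m:=\tfrac{2}{\pi}\tan^{-1}\bigl(\tfrac{b-1}{\pi}\widetilde{M}_n^m(0)\bigr)$, and the two are related by $z_k=\tfrac{n(b-1)}{\pi}\cot\bigl(\tfrac{\pi}{2}r_k\bigr)$. But the $\tan^{-1}$ coordinate is chosen precisely so that the quadratic part of the map becomes the exact translation $r_{m+1}\approx r_m+\tfrac{1}{n}$, making the telescope $1-r_n=\sum_{m=0}^{n-1}(r_m+\tfrac{1}{n}-r_{m+1})$ have summands that are all small, of a single order $\tfrac{1}{n^2(1-r_m)}$, and harmless near $r_m=0$. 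Your reciprocal coordinate does not have this property, and that is where the argument breaks.

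Concretely, from $z_{k+1}=z_k/B_k$ with $B_k=1+A_k/z_k$ and $A_k=\tfrac{b-1}{2}+\tfrac{(b-1)(b-2)}{6z_k}+\tfrac{\pi^2 z_k^2}{2(b-1)n^2}$, the expansion $z_{k+1}-z_k=-A_k+A_k^2/z_k-\cdots$ produces not only your $\tfrac{(b-1)(b+1)}{12z_k}$ but also the cross term $\tfrac{2}{z_k}\cdot\tfrac{b-1}{2}\cdot\tfrac{\pi^2 z_k^2}{2(b-1)n^2}=\tfrac{\pi^2 z_k}{2n^2}$, which your $\mathcal{E}_k=O(z_k^{-2})+O(z_k^4/n^4)$ swallows pointwise but not in aggregate: on the scale $z_k\approx\tfrac{n(b-1)}{\pi}\cot(\tfrac{\pi k}{2n})$,
\begin{align*}
\sum_{k=1}^{n-1}\frac{\pi^2 z_k}{2n^2}\;\approx\;\frac{\pi(b-1)}{2}\int_{1/n}^{1}\cot\Bigl(\frac{\pi\tau}{2}\Bigr)\,d\tau\;=\;(b-1)\log n+O(1),
\end{align*}
the same order as the answer. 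So the claim that $\sum\mathcal{E}_k=O(1)$ is false, and your leading constant would come out wrong unless a compensating $-(b-1)\log n$ emerges from the drift-plus-quadratic sum — but the assertion that that sum equals $-z_{k_0}+O(1)$ is exactly the Euler-discretization identity that needs proof, and near $k=0$, where $z_k\sim n^2/k$ and $w'(\tau)\sim\tau^{-2}$, the Riemann error is itself of order $\log n$. Worse, the geometric expansion of $1/B_k$ is not even convergent for $k=O(1)$ (there $A_k/z_k\approx 1/k$), and $\sum_k z_k^4/n^4\sim n^4$ from those terms, so the stated remainder bound fails outright in the bulk. There is also an internal inconsistency: with $k_0=n-L(n)$, $L=o(n)$, the sum $\sum_{k_0}^{n-1}z_k^{-1}$ evaluates to $\tfrac{2\log L}{b-1}$, not $\tfrac{2\log n}{b-1}$, so the claim of $k_0$-independence does not hold for the split you propose.

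The paper's $r_m$ coordinate avoids all of this: $r_0=0$ is a finite starting point, the Taylor remainder of $\tan^{-1}$ stays bounded as $r_m\to 0$, so the near-$k=0$ region contributes only $O(1/n)$ per step and $O(1)$ in total, and the single remaining Riemann sum $\tfrac{1}{n^2}\sum(1-r_m)^{-1}$ is convergent and directly yields the logarithm. To rescue your approach you would have to work with the exact $z_{k+1}=z_k/B_k$ (no expansion) near $k=0$ and carry the Euler error of the ODE to precision $o(\log n)$ — at which point you would essentially have rediscovered the $\tan^{-1}$ substitution.
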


The following lemma contains some analogous results to Lemma~\ref{LemTan}, and its proof is placed in Section~\ref{SecProofsTwo}.  The main point is (iii), which implies that the variance, $ M_{n}^{n}(0)$, of $W_{n}\big( \BA/n\big)$ is close to the variance,  $\widetilde{M}_{n}^{n}(0) $, of $\widetilde{W}_{n}\big(\BA/n\big)$.

\begin{lemma}\label{LemTanII}
Parts (i) and (ii) below hold for small enough $\epsilon>0$.

\begin{enumerate}[i)]

\item There are constants, $c,C>0$ such that for all $n>1$
$$  \,c \frac{n}{\log n} \,  \leq  \, \widetilde{M}^{\lceil n+\epsilon\log n \rceil}_{n}(0)    \,  \leq  \,C \frac{n}{\log n} \, .$$

\item    Define $\ell_{n}:=\lceil n+\epsilon \log n\rceil$.  There is $C>0$ such that for all $n>1$ all $m\leq \ell_{n}$
$$ \displaystyle
 \frac{d}{dx} \widetilde{M}_{n}^{\ell_{n}-m}(x )\Big|_{x=\widetilde{M}_{n}^{m}(0) }   \, \leq \,  C\Big(\frac{n}{\log n }\Big)^2\frac{1}{1+\big(\frac{\pi}{b-1} \widehat{M}_{n}^m(0)  \big)^2 } \, . $$

\item  As $n\rightarrow \infty$,
  $$ \frac{\log n }{n}\big|  M_{n}^{n}(0) - \widetilde{M}_{n}^{n}(0) \big|\, =\, \mathit{O}\Big(\frac{1}{\log n}   \Big) \, .  $$

\end{enumerate}

\end{lemma}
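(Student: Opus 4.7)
All three parts of the lemma are manifestations of the same critical-point picture. At $\BA=\kappa_b$, the quadratic truncation $\widehat{M}_n$ of $\widetilde{M}_n$ is the time-$1/n$ Euler step of the ODE $\phi'=\tfrac{b-1}{2}\phi^2+\tfrac{\pi^2}{2(b-1)}$, whose exact solution $\phi(r)=\tfrac{\pi}{b-1}\tan(\pi r/2)$ blows up precisely at $r=1$. The cubic term in $\widetilde{M}_n$ regularizes this blow-up, producing iterates $y_k:=\widetilde{M}_n^k(0)$ of order $n/\log n$ throughout a window of size $O(\log n)$ around $k=n$. Parts (i) and (ii) are sharp quantitative statements about this regularized near-blow-up regime, and (iii) shows that the picture for the full map $M_n$ differs from that for $\widetilde{M}_n$ by a negligible correction.

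\textbf{Parts (i) and (ii).} For (i), I would combine two estimates. For $k$ away from $n$, the estimate $|\widetilde{M}_n(x)-\widehat{M}_n(x)|=O(x^3/n^2)$ and a telescope argument (similar to those in the $b<s$ analysis in Section~\ref{Secb<s}) show that $y_k$ tracks $\widehat{M}_n^k(0)=\phi(k/n)+o(1)$, giving the tangent asymptotics. In the transition region $[n-C\log n,\ell_n]$, where $y_k$ becomes of order $n/\log n$, I would work with the reciprocal $u_k:=1/y_k$; the exact identity $u_{k+1}=u_k/(1+u_k\Delta y_k)$ together with the bound $u_k\Delta y_k=\tfrac{b-1}{2n u_k}+O(1/\log^2 n)=O(1/\log n)$ yields $u_{k+1}-u_k=-\tfrac{b-1}{2n}(1+o(1))$, so $u_k$ decreases quasi-linearly in this window. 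Matching these two estimates and starting from the entry-point value $u_{n-C\log n}\asymp \log n/n$ pins down $y_{\ell_n}\asymp n/\log n$ for small $\epsilon$. For (ii), the derivative is the product $\prod_{j=m}^{\ell_n-1}\widetilde{M}_n'(y_j)$ with $\widetilde{M}_n'(x)=1+\tfrac{(b-1)x}{n}+O(x^2/n^2)$; its logarithm is controlled by $\tfrac{b-1}{n}\sum_j y_j$, a Riemann sum for $(b-1)\int_{m/n}^{\ell_n/n}\phi(s)\,ds=2\log|\cos(\pi m/(2n))|-2\log|\cos(\pi\ell_n/(2n))|$, with the transition-window contribution giving only an $O(1)$ multiplicative correction (controlled by (i)). Exponentiating and inverting $\widehat{M}_n^m(0)=\tfrac{\pi}{b-1}\tan(\pi m/(2n))$ to rewrite $\sec^2(\pi m/(2n))$ in terms of $\widehat{M}_n^m(0)$ yields the stated bound, the $(n/\log n)^2$ factor coming from $\sec^2(\pi\ell_n/(2n))$ at the endpoint $\ell_n/n=1+\epsilon\log n/n$.

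\textbf{Part (iii) and the main obstacle.} For (iii), the plan is the telescope
\begin{align*}
M_n^n(0)-\widetilde{M}_n^n(0)=\sum_{k=0}^{n-1}\bigl[M_n^{n-k-1}(M_n(y_k))-M_n^{n-k-1}(\widetilde{M}_n(y_k))\bigr],
\end{align*}
with each term bounded via the mean value theorem by $|(M_n^{n-k-1})'(\xi_k)|\cdot|M_n(y_k)-\widetilde{M}_n(y_k)|$. A fourth-order Taylor expansion of $(1+x/n)^b$ combined with the cumulant expansion of $\lambda(\beta)$ at $\beta=\kappa_b/n$ gives $|M_n(x)-\widetilde{M}_n(x)|=O(x^4/n^3+x/n^2)$, and the derivative factor is to be controlled by applying (ii) along the $M_n$-trajectory. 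The main obstacle is precisely this last step: (ii) is stated for $\widetilde{M}_n$-iterates, whereas the telescope derivative is along $M_n$-iterates. I would resolve the circularity by a bootstrap---first establish a weak a priori bound $\max_{k\le n}|M_n^k(0)-\widetilde{M}_n^k(0)|=o(n/\log n)$ by running the telescope with the crude derivative estimate $(M_n^{n-k-1})'\le e^{O(1)}(\widetilde{M}_n^{n-k-1})'$ (obtained from (iv) of Lemma~\ref{LemMMap}-style exponential bounds transplanted to the $b=s$ setting), then use this closeness to transfer (ii) from $\widetilde{M}_n$-trajectories to $M_n$-trajectories, and finally rerun the telescope with the tight estimates. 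Combined with (i) to control $y_k$ along the trajectory, summing the contributions from the three regimes $k\le n/2$, $n/2<k\le n-C\log n$, and $k>n-C\log n$ yields the $O(n/\log^2 n)$ bound, which after multiplication by $\log n/n$ is the advertised $O(1/\log n)$.
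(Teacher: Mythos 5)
Your parts (ii) and (iii) follow essentially the paper's route: (ii) is the chain-rule product bounded via a Riemann sum for its logarithm, with (i) supplying control of the iterates; (iii) is a telescope with the mean value theorem, using exactly the bootstrap you describe to break the circularity between (ii)'s $\widetilde{M}_n$-iterates and the $M_n$-trajectory (the paper implements it by tracking the first index $\alpha_n$ at which $M_n^m(0)$ overtakes $\widetilde{M}_n^{m+\epsilon\log n}(0)$, and showing $\alpha_n=n+1$). These parts of the plan are sound.

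Part (i), however, has a genuine gap: the two-phase matching cannot close as stated. At $\widehat{\beta}=\kappa_b$, the tracking estimate "$y_k=\phi(k/n)+o(1)$" (the $b=s$ analog of Lemma~\ref{LemTan}(ii)) is valid only while $\gamma(0,k/n)$ stays bounded below $\pi/2$, i.e.\ for $k/n$ bounded away from $1$; at $k=n-C\log n$ that condition eventually fails for every fixed $C$. If one nonetheless takes the ODE value $\phi(1-\tfrac{C\log n}{n})\approx\tfrac{2n}{(b-1)C\log n}$ as the entry point, i.e.\ $u_{n-C\log n}\approx\tfrac{(b-1)C}{2}\tfrac{\log n}{n}$, then after $(C+\epsilon)\log n$ steps of $u_{k+1}-u_k\approx-\tfrac{b-1}{2n}$ one lands at $u_{\ell_n}\approx-\tfrac{(b-1)\epsilon}{2}\tfrac{\log n}{n}<0$ — a contradiction, for any choice of $C$. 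What the matching misses is the accumulated lag of the discrete iterates behind the ODE flow near the blow-up, which is precisely the content of Lemma~\ref{LemRT}: $\widetilde{M}_n^n(0)=\tfrac{6}{b+1}\tfrac{n}{\log n}(1+o(1))$, not $\phi(1^-)=\infty$. The paper's proof of (i) takes Lemma~\ref{LemRT} as input and only then iterates the remaining $\lceil\epsilon\log n\rceil$ steps via a recursion for $c_m^{(n)}:=\tfrac{\log n}{n}\widetilde{M}_n^{n+m}(0)$, showing $c_m$ grows at most like $\exp\{c\,m/\log n\}$ and hence stays bounded for $m\le\epsilon\log n$. Your proposal omits Lemma~\ref{LemRT} (or any substitute establishing the order and leading constant of $\widetilde{M}_n^n(0)$) entirely, so the entry point of the reciprocal phase is unjustified; part (ii), which invokes (i) to bound the transition-window contribution, inherits the same gap.
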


\begin{proof}[Proof of  Theorem~\ref{ThmMain} in the case $s=b$ and $\beta=\kappa_{b}$]  The reasoning follows along the same basic lines as for the case of $\beta<\kappa_{b}$.  First we show that $R_{n}(  \kappa_{b})= \sqrt{n}(W_{n}(  \kappa_{b}/n) -1)$ can be approximated by  $\widetilde{R}_{n}\big(  \kappa_{b}; D_{n} \big)=\sqrt{n}( \widetilde{W}_{n}\big(  \kappa_{b}/n; D_{n}\big)-1)$, on the scale of $\sqrt{\log n}/\sqrt{n}$.  Since $R_{n}(  \kappa_{b})-\widetilde{R}_{n}( \kappa_{b}; D_{n})$ and $\widetilde{R}_{n}( \kappa_{b}; D_{n})$ are uncorrelated
\begin{align*}
\mathbb{E}\Bigg[\bigg(  \Big(\frac{\log n}{n}  \Big)^{\frac{1}{2}}R_{n}(  \kappa_{b})\,-\,\Big(\frac{\log n}{n}  \Big)^{\frac{1}{2}}\widetilde{R}_{n}( \kappa_{b}; D_{n})  \bigg)^{2}\Bigg]\, &=\,\frac{\log n}{n} \bigg[   \textup{Var}\Big( R_{n}(\kappa_{b})\Big)\, - \,  \textup{Var}\Big( \widetilde{R}_{n}\big( \kappa_{b}; D_{n}\big)   \Big)\bigg] \, \\ &=\,\frac{\log n }{n}\Big(  M_{n}^{n}(0) - \widetilde{M}_{n}^{n}(0) \Big) \,\longrightarrow \, 0 \, ,
\end{align*}
where the convergence follows from part (\I\I\I) of Lemma~\ref{LemTanII}.   Thus the difference  $$\Big(\frac{\log n}{n}  \Big)^{\frac{1}{2}}R_{n}\Big( \frac{ \kappa_{b}}{n}\Big)\,-\,\Big(\frac{\log n}{n}  \Big)^{\frac{1}{2}}\widetilde{R}_{n}\Big( \frac{ \kappa_{b}}{n}; D_{n}\Big)$$ converges to zero in probability.

It remains to be shown that
$$Y_{n}\,:=\,\Big(\frac{\log n}{n}  \Big)^{\frac{1}{2}} \widetilde{R}_{n}( \kappa_{b}; D_{n})\, = \, \Big(\frac{\log n}{n}  \Big)^{\frac{1}{2}}\widetilde{R}_{n,n}(\kappa_{b})$$
 converges in law to a Gaussian. The proof is similar to the proof of Theorem \ref{CleanProcess} so we only provide a sketch.   For an increasing sequence $u_{n}\in \mathbb{N}$  with $u_{n}=\mathit{o}\big((\log n)/n   \big)$, define $\widetilde{Y}_{n}:=(\frac{\log n}{n})^{1/2}\widetilde{R}_{n-u_{n},n}(\kappa_{b})$.  It suffices to show that
$$I)\,\,\,\,  Y_{n}\,-\,  \widetilde{Y}_{n}\,\, \stackrel{\mathcal{P}}{\Longrightarrow}\,\,  0     \hspace{1.5cm}\text{and}\hspace{1.5cm}   II)\,\,\,\,\widetilde{Y}_{n}\,\, \stackrel{\mathcal{L}}{\Longrightarrow}\,\, \mathcal{N}\Big(0,\frac{6}{b+1}\Big) \, . $$
For (I) $Y_{n}\,-\,  \widetilde{Y}_{n}$ and $ \widetilde{Y}_{n}$  are uncorrelated,  and thus by similar reasoning as above we get the equality
\begin{align}\label{Morbal}
\mathbb{E}\Big[\big(Y_{n}\,-\,  \widetilde{Y}_{n}\big)^{2}\Big]\, =\,&\frac{\log n }{n}\Big( \widetilde{M}_{n}^{n}(0)\, -\,\widetilde{M}_{n}^{n-u_{n}}(0) \Big)\,.\nonumber \intertext{Using that the derivative of $\widetilde{M}_{n}$ is increasing, the above is smaller than   }  \,\leq \,&   \frac{\log n }{n}\widetilde{M}_{n}^{u_{n}}(0)\frac{d}{dx}\widetilde{M}_{n}^{n-u_{n}}\Big|_{x=\widetilde{M}_{n}^{u_{n}}(0)}\nonumber \\  \, \leq  \,&  C u_{n}\frac{n}{\log n }\,\longrightarrow \,0\,.
\end{align}
 The second inequality holds for some $C>0$ because  with large $n$ $$\widetilde{M}_{n}^{u_{n}}(0)\,=\, u_{n}\frac{\BA^2}{n} \frac{b-1}{b}\,+\, \mathit{O}\Big( \frac{u_{n}}{n^2}   \Big)\,, $$ and the derivative of $\widetilde{M}_{n}^{n-u_{n}}$ is bounded by a constant multiple of $n^2/\log^2n$ by part (ii) of Lemma~\ref{LemTanII}.  The last line of~(\ref{Morbal}) converges to zero by our assumption on  $u_{n}$.

For (II) recall that $ \widetilde{Y}_{n}$ is a normalized sum of i.i.d.\ random variables
$$   \widetilde{Y}_{n}\,:= \, \frac{1}{b^{u_{n}} }\sum_{g\in G_{n-u_{n},n}} \Big(\frac{\log n}{n}  \Big)^{\frac{1}{2}} \widetilde{R}_{n}(\kappa_{b};g)\,. $$
The random variables $ (\frac{\log n}{n})^{1/2}\widetilde{R}_{n}(\kappa_{b};g)$ have variance
$$\textup{Var}\bigg( \Big(\frac{\log n}{n}\Big)^{1/2}\widetilde{R}_{n}(\kappa_{b};g)\bigg)\,=\,  \textup{Var}\big(\widetilde{Y}_{n}\big)\,\approx\,  \textup{Var}\big(\widetilde{Y}_{n}\big)\,\longrightarrow  \,\frac{6}{b+1}\,, $$
where the  approximation holds by~(\ref{Morbal}) and the convergence is  by Lemma~\ref{LemRT}.  Thus the convergence  in law of $ \widetilde{Y}_{n}$ to $\mathcal{N}\big(0,\frac{6}{b+1}\big)$ follows from the Lindeberg-Feller central limit theorem if the random variables $ (\frac{\log n}{n})^{1/2}\widetilde{R}_{n}(\kappa_{b};g)$ satisfy a Lindeberg condition.  Uniform bounds on the fourth moments can be obtained by an analogous argument to that for part (ii) of Lemma~\ref{LemVar}.

\end{proof}

\subsection{Variance explosion}\label{SecStrongDisorder}

We now focus on the variance explosion stated in Theorem~\ref{ThmMain} for the case $s=b$ with  $\BA> \kappa_{b}$.    In fact, Lemma~\ref{LemVarExplode} provides a stronger result, which narrows down where the variance begins to blow-up in terms of the system size.  Before moving to the proof of Lemma~\ref{LemVarExplode}, we prove the following lemma, which is merely a translation of previous results.

\begin{lemma}\label{LemAnalog} The variance of
$ \sqrt{\log n}\big(W_{\lfloor n \kappa_{b}/\BA \rfloor }( \BA/n ) -1\big) $ converges to $ \frac{6}{b+1} $.
\end{lemma}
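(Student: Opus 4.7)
The plan is to reindex the problem so that it directly matches the critical case already handled. Set $m = \lfloor n\kappa_b/\BA \rfloor$ and define $\widetilde\BA_n := m\BA/n$, so that $\BA/n = \widetilde\BA_n/m$ and $\widetilde\BA_n = \kappa_b + O(1/n)$ as $n \to \infty$. With this substitution
$$\textup{Var}\Big(W_{m}\Big(\frac{\BA}{n}\Big)\Big) \;=\; \textup{Var}\Big(W_m\Big(\frac{\widetilde\BA_n}{m}\Big)\Big),$$
which is exactly the quantity controlled by Lemma~\ref{LemRT}, but with system size $m$ and parameter $\widetilde\BA_n$ in place of $\kappa_b$. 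Moreover $\log m = \log n + \log(\kappa_b/\BA) + o(1)$, so $\log m/\log n \to 1$ and it suffices to prove
$$\log(m)\,\textup{Var}\Big(W_m\Big(\tfrac{\widetilde\BA_n}{m}\Big)\Big) \;\longrightarrow\; \frac{6}{b+1}.$$

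I would then rerun the proof of the critical case of Theorem~\ref{ThmMain} with $\kappa_b$ replaced by the perturbed sequence $\widetilde\BA_n$. Write $\textup{Var}(W_m(\widetilde\BA_n/m)) = \sigma_m(\widetilde\BA_n/m) = M_m^m(0)/m$, where $M_m$ is the map of Definition~\ref{DefMaps} built from parameter $\widetilde\BA_n$ at system size $m$. Using the analog of Lemma~\ref{LemTanII}(iii) at parameter $\widetilde\BA_n$, approximate $M_m^m(0)$ by $\widetilde{M}_m^m(0)$ up to error $O(m/\log^2 m)$, and then invoke the analog of Lemma~\ref{LemRT} at parameter $\widetilde\BA_n$ to conclude $\log(m)\,\widetilde{M}_m^m(0)/m \to 6/(b+1)$.

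The only real work is verifying that the conclusions of Lemmas~\ref{LemRT} and~\ref{LemTanII}(iii) are stable under the $O(1/n)$ perturbation of $\BA$. Inspecting Definition~\ref{DefMaps}, the parameter $\BA$ enters $\widetilde{M}_m$ only through the additive constant $\BA^2(b-1)/(bm)$, and the associated ODE $\phi' = \frac{b-1}{2}\phi^2 + \frac{b-1}{b}\BA^2$ has blow-up point $\kappa_{b}/\BA$ depending continuously on $\BA$. Since $|\widetilde\BA_n - \kappa_b| = O(1/n) = o(1/\log m)$, the shift of the blow-up point caused by the perturbation is of lower order than the $1/\log m$ scale on which the asymptotic $\frac{6}{(b+1)\log m}$ is detected, so every estimate passes through unchanged.

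The main (and only) obstacle is bookkeeping: tracking the dependence of the constants in Section~\ref{SecProofsTwo} on $\BA$ and confirming that all the bounds in the proofs of Lemmas~\ref{LemRT} and~\ref{LemTanII} hold uniformly for $\BA$ ranging over a fixed compact neighborhood of $\kappa_b$ (which contains $\widetilde\BA_n$ for all large $n$). Since those proofs are calculus-based and the dependence on $\BA$ is smooth, this uniformity is routine, and the result then follows by combining the uniform analog of Lemma~\ref{LemRT} with the reindexing above.
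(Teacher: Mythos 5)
Your argument is correct and takes essentially the same approach as the paper's own (very terse) proof: reindex via $m = \lfloor n\kappa_b/\BA\rfloor$ and invoke the critical-case analysis of Lemma~\ref{LemRT} at system size $m$. The paper simply asserts ``$\kappa_b/\lfloor n\kappa_b/\BA\rfloor \approx \BA/n$'' and moves on; your introduction of $\widetilde\BA_n = m\BA/n$ to make the identity $\BA/n = \widetilde\BA_n/m$ exact, together with the observation that $|\widetilde\BA_n - \kappa_b| = O(1/n)$ perturbs $\widetilde M_m$ only by $O(1/n^2)$ in its additive constant, is a cleaner way of justifying the same reindexing step.
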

\begin{proof}
In the build-up  (see Lemmas~\ref{LemRT} and~\ref{LemTanII}) to the proof of~(\ref{WConvII}) in Theorem~\ref{ThmMain} we showed that the variance of $ \sqrt{\log n}\big(W_{n }( \kappa_{b}/n\big) -1\big)$  converges to $ \frac{6}{b+1} $.  We can simply apply this result by replacing the system size $n$  by $\lfloor n \kappa /\BA\rfloor$ and the inverse temperature $\kappa_{b}/n$ by $\kappa_{b}/\lfloor n\kappa /\BA\rfloor \approx \BA/n$.

\end{proof}

\begin{lemma}\label{LemVarExplode}
Suppose $s=b$ with  $\BA >  \kappa_{b}$.   Then as  $n$ goes to infinity the variance of $  W_{n}(\BA/n)   $ tends to infinity.  Moreover, there is are constants $0<c_{ \downarrow  }< c_{\uparrow} $ such that for $\ell_{\downarrow}(n):=\lfloor n \kappa_{b}/\BA +c_{\downarrow}\log n \rfloor $ and $\ell_{\uparrow}(n):=\lfloor n \kappa_{b}/\BA +c_{\uparrow}\log n \rfloor $
$$                    \textup{Var}\Big(  W_{\ell_{\downarrow}(n)}(\BA/n)   \Big) \,\, \longrightarrow \,\, 0 \hspace{1cm} \text{and}\hspace{1cm} \textup{Var}\Big(  W_{\ell_{\uparrow}(n)}(\BA/n)   \Big) \,\, \longrightarrow \,\, \infty \,.  $$

\end{lemma}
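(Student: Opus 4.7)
The plan is to track the scalar variance recursion past iteration $k^* := \lfloor n\kappa_b/\BA\rfloor$, using Lemma~\ref{LemAnalog} as an initial condition and comparing the discrete iteration to the ODE $y' = \frac{b-1}{2}y^2$. Let $v_k := \sigma_k(\BA/n) = \textup{Var}(W_k(\BA/n))$; by Corollary~\ref{Sig} this satisfies
\begin{align*}
v_{k+1} \,=\, \frac{1}{b}\!\left[(1+v_k)^b e^{\epsilon_n} - 1\right], \qquad \epsilon_n \,:=\, (b-1)\bigl[\lambda(2\BA/n) - 2\lambda(\BA/n)\bigr] \,=\, \Theta(1/n^2),
\end{align*}
and Lemma~\ref{LemAnalog} gives $v_{k^*} = \tfrac{6}{(b+1)\log n}(1+o(1))$. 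Expanding around small $v$ yields $v_{k+1} = v_k + \tfrac{b-1}{2}v_k^2 + O(v_k^3) + O(1/n^2)$, so the reciprocals $q_k := 1/v_k$ obey $q_{k+1} = q_k - \tfrac{b-1}{2} + O(v_k) + O(q_k^2/n^2)$ while $v_k$ remains small. The critical constant that emerges is $c^* := (b+1)/(3(b-1))$, and I would take any $c_\downarrow \in (0, c^*)$ and $c_\uparrow > c^*$.

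For the vanishing half I would run a bootstrap: under the inductive hypothesis $v_j \leq C_0/\log n$ for $j \in [k^*, k^* + i]$ with $C_0$ to be chosen, iterating the reciprocal relation gives
\begin{align*}
q_{k^* + i + 1} \,\geq\, q_{k^*} \,-\, \tfrac{(b-1)(i+1)}{2} \,-\, O(1) \,-\, o(1),
\end{align*}
since $\sum_j v_j \leq C_0(i+1)/\log n = O(1)$ and $\sum_j q_j^2/n^2 = O(\log^3 n / n^2)$. Taking $i+1 = \lfloor c_\downarrow \log n\rfloor$ and inserting $q_{k^*} = \tfrac{(b+1)\log n}{6}(1+o(1))$ yields $q_{\ell_\downarrow(n)} \geq \log n \cdot \tfrac{b-1}{2}(c^* - c_\downarrow)(1+o(1))$, which closes the bootstrap for any $C_0 > 2/\bigl((b-1)(c^*-c_\downarrow)\bigr)$ and proves $\textup{Var}(W_{\ell_\downarrow(n)}(\BA/n)) = v_{\ell_\downarrow(n)} = O(1/\log n) \to 0$.

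For the explosion half I would use the pointwise inequality $v_{k+1} \geq v_k + \tfrac{b-1}{2}v_k^2$, which follows from $\epsilon_n \geq 0$ (a consequence of convexity of $\lambda$, equivalently Cauchy--Schwarz applied to $e^{\beta\omega}$) together with the truncation $(1+v)^b \geq 1 + bv + \binom{b}{2}v^2$. Inversion gives $q_{k+1} \leq q_k - \tfrac{(b-1)/2}{1 + (b-1)v_k/2}$, so while $v_k \leq \delta$ the reciprocal decreases by at least $\tfrac{b-1}{2}(1-O(\delta))$ per step, forcing the hitting time $\tau_\delta := \min\{m \geq 0 : v_{k^*+m} \geq \delta\}$ to satisfy $\tau_\delta \leq c^*(1+O(\delta))\log n$. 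Choosing $\delta$ small enough that $c^*(1+O(\delta)) < c_\uparrow$ places us at $v \geq \delta$ before iteration $\ell_\uparrow(n)$. From there the inequality $v_{k+1} \geq v_k\bigl(1 + \tfrac{b-1}{2}v_k\bigr)$ gives geometric growth with ratio at least $(b+1)/2$ once $v_k \geq 1$, and the remaining $\Omega(\log n)$ iterations push $v$ to at least a positive power of $n$; hence $v_{\ell_\uparrow(n)} \to \infty$. Monotonicity of the map $v \mapsto \tfrac{1}{b}[(1+v)^b e^{\epsilon_n} - 1]$ combined with $n \geq \ell_\uparrow(n)$ for large $n$ (valid since $\kappa_b/\BA < 1$) then gives the unconditional claim $\textup{Var}(W_n(\BA/n)) \to \infty$.

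The hardest step will be the careful bookkeeping of error terms across $\Theta(\log n)$ iterations: closing the bootstrap in the vanishing half requires the $o(\log n)$ remainder to be genuinely negligible against the main term of size $\tfrac{b-1}{2}(c^*-c_\downarrow)\log n$, and in the explosion half one must rigorously handle the transition out of the quadratic regime $v_k \ll 1$ into the regime where $v_k$ is of order one and the approximation $v_{k+1} \approx v_k + \tfrac{b-1}{2}v_k^2$ ceases to be valid.
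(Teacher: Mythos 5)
Your proposal is correct, and it takes a genuinely different route from the paper. The paper anchors itself at the same place you do (Lemma~\ref{LemAnalog} at $k^*=\lfloor n\kappa_b/\BA\rfloor$), but then handles the two halves by recycling machinery already built for the critical case: the vanishing half is obtained by invoking part (i) of Lemma~\ref{LemTanII} with the ``system size'' reinterpreted as $\lfloor n\kappa_b/\BA\rfloor$, so that $c_\downarrow\log n$ extra iterations still leave the rescaled variance of order $n/\log n$ and hence $\textup{Var}=O(1/\log n)$; the explosion half is proved by dropping to the quadratic lower-bound map $\widehat{M}_n$, starting it from a large level $L$ reached at time $k^*$, and running a crude doubling-time argument, which yields existence of some $c_\uparrow$ (the paper quotes $c_\uparrow>2\log_2 e$) but no sharp constant. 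You instead analyze the exact variance recursion from Corollary~\ref{Sig} directly through the reciprocals $q_k=1/v_k$, whose drift $-\tfrac{b-1}{2}$ per step identifies the sharp window constant $c^*=\tfrac{b+1}{3(b-1)}$: any $c_\downarrow<c^*$ works for the vanishing half (via your bootstrap, which closes since the accumulated errors $\sum_j O(v_j)=O(1)$ and $\sum_j O(q_j^2/n^2)=O(\log^3 n/n^2)$ are negligible against the surviving main term $\tfrac{b-1}{2}(c^*-c_\downarrow)\log n$) and any $c_\uparrow>c^*$ for the explosion half (via the hitting-time bound for $v$ to exceed $\delta$, then the constant-time passage to $v\geq 1$ and geometric growth with ratio at least $\tfrac{b+1}{2}$). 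Your approach buys a strictly stronger conclusion than the lemma asks for — the sharp threshold, which the paper neither claims nor obtains — and is self-contained modulo Corollary~\ref{Sig} and Lemma~\ref{LemAnalog}; the price is that you must redo, by hand, the bookkeeping over $\Theta(\log n)$ iterations that the paper gets from the already-proved derivative and composition estimates in Lemma~\ref{LemTanII}. The individual ingredients you rely on all check out: $\epsilon_n=(b-1)[\lambda(2\BA/n)-2\lambda(\BA/n)]=\Theta(1/n^2)$ and $\epsilon_n\geq 0$ by convexity of $\lambda$, the binomial truncation $(1+v)^b\geq 1+bv+\binom{b}{2}v^2$ is valid since $b$ is an integer, the reciprocal step $q_{k+1}\leq q_k-\tfrac{(b-1)/2}{1+(b-1)v_k/2}$ is exact given the pointwise inequality, and monotonicity of $k\mapsto v_k$ together with $\ell_\uparrow(n)\leq n$ legitimately upgrades the $\ell_\uparrow$ statement to the unconditional divergence of $\textup{Var}(W_n(\BA/n))$.
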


\begin{remark}
In Theorem 2.11 of~\cite{lacoin}, Lacoin and Moreno state that there exist $c,C>0$ such that for all $\beta>0$
\begin{align}\label{Tintin}
  \exp\Big\{  -\frac{c}{\beta^2}  \Big\}   \,\leq \,\lambda (\beta) -p(\beta) \,\leq \,   \exp\Big\{  -\frac{C}{\beta}  \Big\}\,,
\end{align}
where $p(\beta)$ is the quenched free energy
$$  p(\beta)\,:=\,\lim_{n\rightarrow \infty}\mathbb{E}\big[ \log\big(Z_{n}(\beta) \big)  \Big]\,. $$
By following their argument, which relies on~\cite[Proposition 4.3]{lacoin}, we can use the convergence  of   $ \textup{Var}\big(  W_{\ell_{\downarrow}(n)}(\BA/n)   \big) $ to zero to get can an upper bound of the form
\begin{align}\label{Koojo}
   \lambda (\beta) -p(\beta) \,\leq \, \exp\Big\{   -\frac{\kappa_{b}\log b }{\beta}   +C\log \beta    \Big\}     \,
\end{align}
for some $C>0$ and all $\beta>0$.  However, since the inequality~(\ref{Tintin}) involves qualitatively different  bounds from above and below, it is not clear to us that~(\ref{Koojo}) is truely a step in the right direction.

\end{remark}

\begin{proof}[Proof of Lemma~\ref{LemVarExplode}]

   The variance of  $W_{n}( \BA /n ) $ is equal to $\frac{1}{n}\varrho_ {n }^{(n)}(\BA) $, so it is sufficient to show that $\varrho_ {n }^{(n)}(\BA)$ eventually grows at a faster than linear rate in $n$.  Recall that $\varrho_ {m }^{(n)}(\BA) \, = M_{n}^{m}(0) $ and  $M_{n}(x) \geq \widehat{M}_{n}(x)  $ for all $x\geq 0$, where $M_{n}$ and $\widehat{M}_{n}$ are defined as in  Definition~\ref{DefMaps}.  As a consequence of Lemma~\ref{LemAnalog},   we can choose $n$ large enough so that $\widehat{M}_{n}^{k}(0)    $ with $k:=\lfloor n \kappa_{b}/\BA \rfloor  $ is larger than any $L>0$.  Pick some $L$ greater than $\frac{16\BA}{(\BA-\kappa_{b})(b-1)}$.   Notice that we have the  lower bound $\varrho_ {n }^{(n)}(\BA)\, >\,  \widehat{M}_{n}^{n-k}(L)$.
The first inequality below holds for any $K>0$ by the form of the maps $\widehat{M}_{n}$:
\begin{align}\label{MKay}
\widehat{M}_{n}^{r}(K)\,  > \, K\bigg(1+\frac{1}{n}\frac{b-1}{2}K   \bigg)^r \,
    >\,   K\exp\left\{  \frac{r}{n}\frac{b-1}{4} K\right\}\,.
\end{align}
The second inequality holds  as long as $\frac{K(b-1)}{2n}$ is smaller than the solution $x>1$ to the equation    $1+x =\exp\{ x/2   \}   $.
Observe that the function $g(r):=\widehat{M}_{n}^{r}(K)$ doubles before $r$ reaches the value $\lceil \frac{4n}{(b-1)K}  \rceil $.  Define the sequence $r_{j}\in \mathbb{N}$ such that
$$ r_{j}\, :=\, \sum_{i=1}^{j} \left\lceil \frac{4n}{2^{i-1}(b-1)L}  \right\rceil  \, .  $$
Through repeated use of~(\ref{MKay}), we have that
\begin{align}
 \widehat{M}_{n}^{r_{j}}(L)\,>\, L 2^{j}\, \quad \text{ and thus }\quad \widehat{M}_{n}^{n-k}(L)\, > \, L2^{|S_{L,n}  |}
\end{align}
for the set $S_{L,n}:=\{ j\in \mathbb{N}\,|\,   r_{j}<n-k     \}$.  However,  the number of terms in $S_{L,n}$ will grow linearly with large $n$:
$$  \frac{ | S_{L,n}| }{n} \, \approx \, \frac{n-k-  \frac{8n}{(b-1)L}  }{n} \,>\,\frac{1}{2}\Big(1-\frac{\kappa_{b}}{\BA}\Big)\,,$$
where the second inequality uses  that $L$ has a lower bound of  $\frac{16\BA}{(\BA-\kappa_{b})(b-1)}$.   It follows that  $\widehat{M}_{n}^{n}(0) $ grows exponentially in $n$ and thus the variance of $W_{n}\big( \BA/n \big) $  does also. \vspace{.2cm}

In fact the variance of  $W_{k}\big( \BA /n \big) $ already blows up as $k$ reaches the value $\ell_{\uparrow}(n):=\lfloor n \kappa_{b}/\BA +c_{\uparrow }\log n \rfloor $  for  $c_{\uparrow}>2\log_{2}(e)$ since with large $n$
$$\frac{|S_{L,\ell_{n}} |}{  c_{\uparrow}\log n   } \quad \longrightarrow \quad  1\,.$$
Thus,
$$  \textup{Var}\Big(  W_{\ell(n)}\big( \BA /n \big)  \Big)\, =\,\frac{1}{n}  \varrho_ {\ell(n) }^{(n)}(\BA)\, >\,  \frac{1}{n}\widehat{M}_{n}^{\ell(n)-k}(L)   \, > \, \frac{L}{n}2^{|S_{L,\ell(n)}  |} \,\gg \sqrt{n} \,\,   .  $$

Finally, the existence of $c_{\downarrow}$ such that   $\textup{Var}\Big(  W_{\ell_{\downarrow}(n)}(\BA/n)   \Big) $ converges to zero for  $\ell_{\downarrow}(n):=\lfloor n \kappa_{b}/\BA +c_{\downarrow}\log n \rfloor $ follows from Lemma~\ref{LemAnalog} and part (i) of Lemma~\ref{LemTanII}.  For the application of Lemma~\ref{LemTanII}, the system size ``$n$" in the statement of the lemma is replaced by our value  $\lfloor n \kappa_{b}/\BA \rfloor$.

\end{proof}

\section{The case of $s<b$}\label{SecNormalWeakDisorder}

In this section we prove Theorem \ref{ThmMainII} in the case $s < b$.      The analysis here is trivial, but it is mathematically  instructive to see what happens differently between the $b=s$ and $b>s$ cases.  Recall that we are generically scaling the  inverse temperature $\beta\equiv\beta_n$ to zero as the size, n, of the system grows.

  It will be convenient to reset some of the notations from previous sections. For $g\in G_{k,n}$ define
$$ R_{n}(g) \, := \,   \frac{1}{\beta_{n}}\Big(W_{n}\big(\beta_{n}; g\big)-1 \Big)    \, ,    $$
which satisfies the recursive relation
\begin{align}\label{ReRec}
R_{n}( g)\, =\,\,    \frac{1}{\beta_{n}}\Bigg[\frac{1}{b}\sum_{i=1}^{b} \prod_{1\leq j\leq s}\bigg(1+\beta_{n}R_{n}\big( g\TS (i,j)\big)\bigg)\prod_{1\leq j \leq s-1}  E_{g\diamond (i,j)}(\beta_{n}   ) \,-\, 1\Bigg]\,.
\end{align}

\begin{definition}
 For $g\in G_{k,n}$ we define $\widehat{R}_{n}(g)$ as the solution to the  recursive relation
\begin{align}\label{PartialIII}
 \widehat{R}_{n}( g)\, =& \,\, \frac{1}{b}\sum_{i=1}^{b} \sum_{1\leq j\leq s}\widehat{R}_{n}\big( g\TS (i,j)\big) \, +\,  \frac{1}{b }\sum_{i=1}^{b} \sum_{1\leq j \leq s-1}\omega_{g\diamond (i,j)}
\end{align}
 with $\widehat{R}_{n}( g)=0$ for $g\in G_{0,n}$.
\end{definition}
\begin{remark}\label{RemarkEasy}
The recursive relation~(\ref{PartialIII}) does not contain any quadratic terms in the random variables $\widehat{R}_{n}\big( g\TS (i,j)\big) $  as it did in the last section.  Notice that when $g=D_{n}$ the recursive relation~(\ref{PartialIII}) simply gives us
$$ \widehat{R}_{n}( D_{n})\, = \, \sum_{m=1}^{n} \frac{1}{b^{m}}\sum_{ a\in V_{m}    }\omega_{a}  \, .$$
\end{remark}

\begin{definition} Define the maps $M_{n},\widehat{M}_{n}:\R^{+}\rightarrow \R^{+}$ for $n\in \mathbb{N}$ as
\begin{itemize}

\item $\ds M_{n}(x)\, :=\, \frac{1}{b\beta_{n}^2}\Big[\big(1+ x\beta_{n}^2 \big)^{s} e^{(s-1)[\lambda (2\beta_{n})-2\lambda(\beta_{n})  ]}   \,-\,1   \Big]$

\item $\displaystyle \widehat{M}_{n}(x)\, :=\, \frac{s}{b}x+\frac{s-1}{b} $

\end{itemize}

\end{definition}
\begin{remark}
As usual the maps $M_{n}$ and $\widehat{M}_{n}$ are defined so that
$$\mathbb{E}\Big[\big( R_{n}( D_{n})   \big)^2\Big]\, =\,  M_{n}^{n}( 0) \hspace{.7cm} \text{and} \hspace{.7cm}  \mathbb{E}\Big[\big( \widehat{R}_{n}( D_{n})   \big)^2\Big]\, =\,  \widehat{M}_{n}^{n}(0 )\, . $$

\end{remark}

The following lemma has an easy proof, which we do not include.

\begin{lemma}\label{LemMS}  There is a $C>0$ such that the inequality below holds for all $n\geq 1$
$$
\big| M_{n}^{n}(0)\,-\, \widehat{M}_{n}^{n}(0)\big| \, \leq \,  C\beta_{n}\, .
$$

\end{lemma}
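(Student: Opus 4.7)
The plan is to compare the two maps pointwise and then iterate. First I would Taylor-expand $M_n(x)$ at $\beta_n=0$. Using $\lambda(2\beta_n)-2\lambda(\beta_n)=\beta_n^2+O(\beta_n^3)$ (from the mean-zero, variance-one assumptions on $\omega$) and $(1+x\beta_n^2)^s=1+sx\beta_n^2+O(x^2\beta_n^4)$ gives
\[
(1+x\beta_n^2)^s e^{(s-1)[\lambda(2\beta_n)-2\lambda(\beta_n)]}=1+\beta_n^2(sx+s-1)+O(\beta_n^3)(1+x^2),
\]
so after dividing by $b\beta_n^2$,
\[
M_n(x)=\widehat{M}_n(x)+O(\beta_n)(1+x^2).
\]
Hence for any $L>0$ there exists $C_L>0$ such that $|M_n(x)-\widehat{M}_n(x)|\leq C_L\beta_n$ uniformly in $x\in[0,L]$ and in $n\geq 1$ (adjusting $C_L$ to absorb finitely many small-$n$ cases if necessary).

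Next I need the orbit $\{M_n^k(0)\}_{k\leq n}$ to stay in a bounded region so that the previous estimate applies along the entire iteration. Since $s<b$, the affine map $\widehat{M}_n$ has contraction rate $s/b<1$ and unique fixed point $x^\star=\tfrac{s-1}{b-s}$, so $\widehat{M}_n^k(0)\leq x^\star$ for all $k$. Taking $L:=x^\star+1$, a straightforward induction shows that for all sufficiently large $n$ (equivalently, sufficiently small $\beta_n$), $M_n^k(0)\leq L$ for every $k\leq n$: if $M_n^k(0)\leq L$ then
\[
M_n^{k+1}(0)\leq \widehat{M}_n(L)+C_L\beta_n=\tfrac{s}{b}L+\tfrac{s-1}{b}+C_L\beta_n\leq L
\]
once $\beta_n$ is small enough. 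The finitely many small-$n$ exceptions are again absorbed into the final constant.

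Finally I would telescope. Because $\widehat{M}_n$ is affine with slope $s/b$, the composition $\widehat{M}_n^{j}$ is affine with slope $(s/b)^{j}$, in particular $|\widehat{M}_n^{j}(y_1)-\widehat{M}_n^{j}(y_2)|=(s/b)^{j}|y_1-y_2|$. Writing
\[
M_n^n(0)-\widehat{M}_n^n(0)=\sum_{k=0}^{n-1}\Big(\widehat{M}_n^{n-k-1}\big(M_n(M_n^k(0))\big)-\widehat{M}_n^{n-k-1}\big(\widehat{M}_n(M_n^k(0))\big)\Big),
\]
the triangle inequality combined with the two previous steps yields
\[
|M_n^n(0)-\widehat{M}_n^n(0)|\leq \sum_{k=0}^{n-1}(s/b)^{n-k-1}\,C_L\beta_n\leq \frac{C_L}{1-s/b}\,\beta_n,
\]
which is the claim. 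The only delicate point is the bootstrap in the middle paragraph controlling the orbit of $M_n$; once that boundedness is secured, the rest is just a contraction-assisted telescoping.
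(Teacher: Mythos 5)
Your proof is correct, and since the paper explicitly omits the proof of this lemma (noting only that it is ``easy''), there is nothing specific to compare against; the route you take — a pointwise bound $|M_n(x)-\widehat{M}_n(x)|\le C_L\beta_n$ on a bounded interval, a bootstrap to confine the orbit $M_n^k(0)$ to that interval using the strict contraction of the affine map $\widehat{M}_n$ (here is where $s<b$ is essential), and a telescoping sum whose terms are damped geometrically by the factors $(s/b)^{n-k-1}$ — is exactly the standard argument this lemma calls for, and it mirrors the telescoping-plus-derivative-bound template the paper uses for its harder analogues (Theorem~\ref{LemVarIII} and Lemma~\ref{LemTan}(iii)). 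One small remark: in your Taylor expansion the dominant $O(\beta_n)$ error actually comes from the cubic term $\mathbb{E}[\omega^3]\beta_n^3$ in $\lambda(2\beta_n)-2\lambda(\beta_n)$, not from the $(1+x\beta_n^2)^s$ factor (which only contributes $O(\beta_n^2)$ on a bounded interval); this does not affect the conclusion but clarifies why the bound is $O(\beta_n)$ rather than $O(\beta_n^2)$.
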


\begin{proof}[Proof of Theorem~\ref{ThmMainII} in the case of $s<b$] By the recursive relation~(\ref{RemarkEasy}), it is clear that
$$ \widehat{R}_{n}( D_{n})\quad \,\stackrel{\mathcal{P}}{\Longrightarrow} \quad  \,\sum_{m=1}^{\infty} \frac{1}{b^{m}}\sum_{ a\in V_{m}    }\omega_{a} \, . $$
Thus it is sufficient to show that the variance of the difference $R_{n}( D_{n})  -   \widehat{R}_{n}(D_{n})$
converges to zero with large $n$.  The difference $R_{n}( D_{n})  -   \widehat{R}_{n}(D_{n})$ is uncorrelated with $ \widehat{R}_{n}(D_{n})$, so
\begin{align}
\mathbb{E}\Big[ \big(R_{n}( D_{n})  \, - \,   \widehat{R}_{n}( D_{n})\big)^{2}\Big]   \, = \,& \mathbb{E}\Big[ \big( R_{n}( D_{n})   \big)^{2}\Big] \,-\, \mathbb{E}\Big[ \big( \widehat{R}_{n}( D_{n})   \big)^{2}\Big]  \nonumber \\
\, = \,&   M_{n}^{n}(0)\,-\,\widehat{M}_{n}^{n}(0)  \, =\,\mathit{O}\big(\frac{1}{n}\big)\,,
\end{align}
where the order equality holds by Lemma~\ref{LemMS}.

\end{proof}

\section{The edge model}\label{SecEdgeModel}

A closely related polymer model can be defined by placing random i.i.d.\ weights $\omega_{a}$ on the edges $a\in E_{n}$ rather than the vertices of the diamond lattice $D_{n}$.  The normalized partition function takes the form
\begin{align}\label{ExpWII}
W_{n}(\beta)\, := \,  \frac{1}{|\Gamma_{n}|}\sum_{p\in \Gamma_{n}  }\prod_{a\Try p} E(\beta; a   )
\end{align}
for $\ds E(\beta; a   ):= \frac{e^{\beta  \omega_{a}  }  }{ \mathbb{E}[e^{\beta  \omega_{a}  }]  } $, where $a\Try p$ means that the edge $a$ lies along the path $p$.  In analogy with~(\ref{Induct}) there is an obvious recursive relation between the distributions of $W_{n}(\beta)$ and $W_{n+1}(\beta)$:
\begin{align*}
W_{n+1}(\beta)  \, \stackrel{ d }{ =}  \,  \frac{1}{b}\sum_{i=1}^{b} \prod_{1\leq j\leq s}  W_{n}^{(i,j)}(\beta)  \, ,
\end{align*}
 where the $W_{n}^{(i,j)}(\beta)$  are independent copies of  $W_{n}(\beta)$.  In the case of $b=s$, there is an interesting dichotomy between the edge and vertex models that is revealed by considering the expected number of edges/vertices shared between two randomly chosen paths:  two random paths picked from $\Gamma_{n}$ have an expected number of shared edges equal to $1$ and shared vertices equal to $\frac{b-1}{b}n$.   This is a consequence of the hierarchical   positions of the vertices on the graph in which  the lower generation vertices are shared by exponentially more paths.  Indeed, the edges  play an equivalent role with respect to paths to only the $n^{th}$ generation vertices.  Since the elements $\prod_{a\Try p} E(\beta; a   )$  in the sum~(\ref{ExpWII}) share fewer of the variables $ E(\beta; a   )$ on average in the edge model, this suggests that the edge model will exhibit ``less disorder" and perhaps even have a finite range of small $\beta$ in which the model is weakly disordered.  On the other hand the  second moment of $W_{n}(\beta) $ tends to infinity with large $n$ since it satisfies the recursive equation
$$  \mathbb{E}\big[W_{n+1}^2(\beta) \big]  \, =\,  \frac{1}{b}  \mathbb{E}\big[  W_{n}^2(\beta)  \big]\,+\,\frac{b-1}{b}\,.  $$
with $\mathbb{E}\big[W_{0}^2(\beta) \big]=\mathbb{E}\big[E^2(\beta; a   )\big] >1$.   The following theorem follows in analogy to Theorem~\ref{ThmMain}.

\begin{theorem}[$b=s$, edge model]\label{ThmOldII}  With large $n\in \mathbb{N}$ the variance of $W_{n}\big( \widehat{\beta}/\sqrt{n}\big)$ converges to zero for $\BA\leq \kappa_{b}$ and tends to infinity for $\BA > \kappa_{b}$.   When  $\widehat{\beta}  <  \kappa_{b}$  we have the following weak convergence:
\begin{align*}
 \sqrt{n}\bigg(W_{n}\Big(\frac{\widehat{\beta}}{\sqrt{n}}\Big) -1\bigg)    \quad  \stackrel{\mathcal{L}}{\Longrightarrow} \quad  \mathcal{N}\big( 0,\, \upsilon_{b}(\widehat{\beta}) \big) \,,
\end{align*}
where the variance of the limit  is  $\ds \upsilon_{b}(\widehat{\beta}):=\big(1/\widehat{\beta}^{2} - 1/\kappa_{b}^{2}  \big)^{-1}$.    At the critical value $\widehat{\beta}=\kappa_{b}$, the limit result becomes:
\begin{align*}
  \sqrt{\log n}\left(W_{n}\Big(\frac{\widehat{\kappa_{b} }}{\sqrt{n}}\Big) -1\right)    \quad  \stackrel{\mathcal{L}}{\Longrightarrow} \quad  \mathcal{N}\Big( 0,\, \frac{6}{b+1}\Big) \,.
\end{align*}

\end{theorem}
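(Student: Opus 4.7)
The plan is to adapt the proof architecture of Theorem~\ref{ThmMain} from Section~\ref{SecWeakDisorder} to the edge model. I would introduce the rescaled variables $R_n(\BA; g) := \sqrt{n}\big(W_n(\BA/\sqrt{n}; g) - 1\big)$ for subgraphs $g \in G_{k,n}$; at the base level $g \in G_{0,n}$ (a single edge $a$) one has $R_n(\BA; g) = \sqrt{n}(E(\BA/\sqrt{n}; a) - 1)$, which Taylor-expands to $\BA\omega_a + \mathit{O}(1/\sqrt{n})$. Expanding the recursion $W_{n+1} \stackrel{d}{=} \frac{1}{b}\sum_i \prod_j W_n^{(i,j)}$ yields
\begin{align*}
R_n(\BA;g) \,=\, \frac{1}{b}\sum_{i=1}^{b}\sum_{j=1}^{b} R_n(\BA; g\TS(i,j)) \,+\, \frac{1}{b\sqrt{n}}\sum_{i=1}^{b}\sum_{1\leq j_1<j_2\leq b}R_n(\BA;g\TS(i,j_1))R_n(\BA;g\TS(i,j_2)) \,+\,\textbf{(lower order)}.
\end{align*}
Let $\widehat{R}_n(\BA;g)$ be the truncation of this recursion to its linear and quadratic terms, retaining the same base data. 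The essential structural difference from the vertex case of Section~\ref{SecWeakDisorder} is that no additive noise is injected at intermediate levels; all randomness enters through the base variables $\omega_a$ and propagates upward. In particular the variance recursion for $\widehat{\varrho}_k^{(n)} := \textup{Var}(\widehat{R}_n(\BA;g))$ with $g\in G_{k,n}$ simplifies to $\widehat{\varrho}_{k+1}^{(n)} = \widehat{\varrho}_k^{(n)} + \frac{b-1}{2n}\big(\widehat{\varrho}_k^{(n)}\big)^2$ with initial value $\widehat{\varrho}_0^{(n)}\to\BA^2$.

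The full variance satisfies $\varrho_{k+1}^{(n)} = M_n(\varrho_k^{(n)})$ with $M_n(x) := \frac{n}{b}[(1+x/n)^b - 1]$, while the linearization corresponds to iterates of $\widehat{M}_n(x) := x + \frac{b-1}{2n}x^2$. Mimicking the strategy of Lemma~\ref{LemTan}, for $\BA < \kappa_b$ I would show that $\widehat{M}_n^n(\BA^2)$ converges to $\upsilon_b(\BA)$ (the value at $t=1$ of the Riccati ODE solution that $\widehat{M}_n$ approximates) and that $|M_n^n(\BA^2) - \widehat{M}_n^n(\BA^2)| = \mathit{O}(1/n)$. Since $R_n(\BA) - \widehat{R}_n(\BA)$ is uncorrelated with $\widehat{R}_n(\BA)$ (via the intermediary construction used in the proof of Lemma~\ref{LemZ2Z}), this control on the iterates gives $\mathbb{E}|R_n - \widehat{R}_n|^2 = \mathit{O}(1/n) \to 0$.

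For the asymptotic normality of $\widehat{R}_n$ in the subcritical regime I would mirror the proof of Theorem~\ref{CleanProcess}: writing $\widehat{R}_n(\BA; D_n) = b^{-(n-k)}\sum_{g\in G_{k,n}}\widehat{R}_n(\BA;g)$ for $k$ slightly smaller than $n$ expresses it as a normalized sum of $b^{2(n-k)}$ i.i.d.\ terms. Lindeberg-Feller then applies once a uniform fourth-moment bound on $\widehat{R}_n(\BA;g)$ is established, and this propagates through the recursion by induction exactly as in part~(ii) of Lemma~\ref{LemVar}. A slicing argument of the same form used to treat $r=1$ in Theorem~\ref{CleanProcess} handles the endpoint $k=n$. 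Combining with the $L^2$ control above transfers the Gaussian limit to $\sqrt{n}(W_n-1)$, with variance $\upsilon_b(\BA)$.

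For the critical case $\BA = \kappa_b$, the quadratic linearization $\widehat{M}_n$ is no longer sufficient at time $t=1$ (where the Riccati solution blows up), so I would retain the cubic term and work with $\widetilde{M}_n(x) := \widehat{M}_n(x) + \frac{(b-1)(b-2)}{6n^2}x^3$ and a corresponding $\widetilde{R}_n$, in analogy with Section~\ref{SecWeakDisorder}. The hard part will be establishing the precise asymptotic $\widetilde{M}_n^n(\BA^2) \sim \frac{6n}{(b+1)\log n}$, the edge-model analog of Lemma~\ref{LemRT}; this requires a delicate boundary-layer analysis of the iterate near the ODE blow-up time, and is the main obstacle in the whole proof. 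Once it is in hand, a Lindeberg-Feller argument of the same form as in the subcritical case, but with normalization $\sqrt{\log n}$, produces the Gaussian limit with variance $6/(b+1)$. Finally, for $\BA > \kappa_b$ the variance explosion is obtained by domination: $M_n(x) \geq \widehat{M}_n(x)$ for $x \geq 0$, and iterating $\widehat{M}_n$ beyond the ODE blow-up time forces exponential growth of $\widehat{M}_n^n(\BA^2)$, exactly as in the proof of Lemma~\ref{LemVarExplode}.
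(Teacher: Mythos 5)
Your architecture is the right one and closely parallels the paper's vertex-case proof: introduce $R_n(\BA;g) = \sqrt{n}(W_n(\BA/\sqrt{n};g)-1)$ with nontrivial base data $R_n(\BA;a) \approx \BA\omega_a$, derive the quadratic recursion and the variance map $\widehat{M}_n(x) = x + \tfrac{b-1}{2n}x^2$, approximate in $L^2$ by an intermediary construction, and apply Lindeberg--Feller with a slicing argument. You correctly identify the two essential structural differences from the vertex model (no additive noise injected at intermediate levels; nonzero initial variance $\BA^2$), and the cubic refinement plus boundary-layer analysis for the critical case is indeed the hard part, as you say.

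However, there is a concrete inconsistency you should have caught by actually writing down the Riccati solution. The ODE your $\widehat{M}_n$ approximates is $g'(t) = \tfrac{b-1}{2}g^2$ with $g(0) = \BA^2$, whose explicit solution is
\[
g(t) = \Big(\frac{1}{\BA^2} - \frac{b-1}{2}\,t\Big)^{-1}, \qquad g(1) = \Big(\frac{1}{\BA^2} - \frac{b-1}{2}\Big)^{-1},
\]
which blows up precisely when $\BA^2 = \tfrac{2}{b-1}$, i.e.\ at $\BA = \sqrt{2/(b-1)}$. Matching this to the claimed form $\upsilon_b(\BA) = (1/\BA^2 - 1/\kappa_b^2)^{-1}$ forces $\kappa_b^2 = 2/(b-1)$. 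But the paper's equation \eqref{KappaB} defines $\kappa_b = \tfrac{\pi\sqrt{b}}{\sqrt{2}(b-1)}$, so $\kappa_b^2 = \tfrac{\pi^2 b}{2(b-1)^2}$, and these two are never equal for $b>1$. So either the theorem statement reuses the symbol $\kappa_b$ for a \emph{different} constant (the edge-model critical value $\sqrt{2/(b-1)}$, which the ODE with nonzero initial condition and no forcing term demands) or the statement has an error. Your proposal implicitly accepts the theorem's $\kappa_b$ without verifying it against the ODE, which masks this. The remainder of your plan is fine: at $\BA^2 = 2/(b-1)$ the critical flow behaves like $\tfrac{2}{(b-1)(1-t)}$ near $t=1$, which is the same singular asymptotic as the tangent solution in the vertex case, and since the cubic coefficient $\tfrac{(b-1)(b-2)}{6n^2}$ in $\widetilde{M}_n$ is also the same, the boundary-layer computation should indeed yield the constant $\tfrac{6}{b+1}$ as claimed. (As an aside, the displayed recursion for $\mathbb{E}[W_{n+1}^2(\beta)]$ in Section~\ref{SecEdgeModel} is missing a power: it should read $\tfrac{1}{b}\big(\mathbb{E}[W_n^2(\beta)]\big)^b + \tfrac{b-1}{b}$, which is what your variance recursion is derived from.)
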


\section{Miscellaneous proofs}\label{SecMiscProofs}

\subsection{Proofs from Section~\ref{Secb<s}}\label{SecProofsOne}

\begin{proof}[Proof of Lemma~\ref{LemMMap}]
Note that there is a $c>0 $ such that for all $x\in \R^{+}$
\begin{align}\label{Timoy}
  \frac{s}{b}x \,< \, \widehat{M}(x)\, \leq \, \frac{s}{b}xe^{cx}\, .
\end{align}
It is sufficient to show that the inequalities (i)-(iii) hold for $\lambda>0$ small enough.   We will assume that $\lambda $ satisfies
\begin{align}\label{Thingy}
\exp\bigg\{\frac{c\frac{s}{b}\lambda }{1-\frac{b}{s}}   \bigg\}\leq \frac{s}{b}\,.
\end{align}
\vspace{.2cm}

\noindent Part  (i):
Let $\widetilde{n}\equiv \widetilde{n}(x,N) \in \mathbb{N}$ be the first value of $m$ such that
$$     \widehat{M}^{m}\big( x (b/s)^{N} \big)  \, >\,   x\frac{s}{b}\,. $$
When $N$ is large, the  value of $\widetilde{n}$ must be greater than $N$ by the following induction argument:   For $m< \widetilde{n}\wedge N$, we can apply~(\ref{Timoy}) to get the first two inequalities below.
\begin{align}\label{Goring}
  \widehat{M}^{m+1}\big( x (b/s)^{N}   \big) \,\leq \,&  x  (b/s)^{N-m-1}   \exp\bigg\{c\sum_{r=0}^{m}\widehat{M}^{r}\big( x (b/s)^{N}   \big)   \bigg\} \nonumber \\
\,\leq \,&  x  (b/s)^{N-m-1}  \exp\bigg\{c\sum_{r=0}^{m}\big(\frac{b}{s}\big)^{m-r}\widehat{M}^{m}\big( x (b/s)^{N}   \big)   \bigg\}\nonumber \\
\,= \,&  x  (b/s)^{N-m-1}  \exp\bigg\{ \frac{c}{1-\frac{b}{s}}\widehat{M}^{m}\big( x (b/s)^{N}   \big)   \bigg\}\nonumber \\
   \,\leq \,&    x  (b/s)^{N-m-1}\exp\bigg\{\frac{c\frac{s}{b}x}{1-\frac{b}{s}}   \bigg\} \, \leq \,   x  \big(\frac{b}{s}\big)^{N-m}\,.
\end{align}
The third inequality uses the assumption that $m< \widetilde{n}$, and the last inequality uses that $x\leq \lambda$.  The above implies that $N+1\leq  \widetilde{n}\wedge N$.  We can apply analogous inequalities as above to get the desired bound for $ \widehat{M}^{N-n}\big( x (b/s)^{N}   \big)$.

 \vspace{.3cm}

\noindent Part (ii): By similar reasoning as in~(\ref{Goring}),
\begin{align*}
  \widehat{M}^{N-n}\big( x (b/s)^{N}   \big) \,-\, x  (b/s)^{n} \,\leq \,& x  (b/s)^{n} \bigg(\exp\bigg\{\frac{c}{1-\frac{b}{s}}\widehat{M}^{N-n}\big( x (b/s)^{N}   \big)    \bigg\}-1   \bigg)\,.
\intertext{Moreover, applying the inequality $e^{r}-1\leq re^{r}$ yields     }
\,\leq \,&  x(b/s)^{n} \frac{c}{1-\frac{b}{s}}   \widehat{M}^{N-n}\big( x (b/s)^{N}   \big)    \exp\bigg\{\frac{c}{1-\frac{b}{s}}\widehat{M}^{N-n}\big( x (b/s)^{N}   \big)    \bigg\}\,
\\ \,\leq \,& Cx^2(b/s)^{2n} \,,
\end{align*}
where the last inequality holds for some $C>0$ as a consequence of (i).\vspace{.3cm}

\noindent Parts (iii) and (iv): Let  $y:= x (b/s)^{n}$.  To bound the derivative, we can apply the chain rule to get
\begin{align}
 \frac{d}{dx}  \widehat{M}^{n}\big(x(b/s)^{n} \big)\, \leq \, & \frac{s}{b}\prod_{r=0}^{n-1}\Big(  1+\widehat{M}^{r}(y ) \Big)^{s-1} \nonumber \\ \,=\, & \frac{s}{b} \prod_{r=1}^{n}\Big(  1+\widehat{M}^{r-n}\big( \widehat{M}^{n-1}(y )  \big) \Big)^{s-1}\nonumber  \\
\, \leq \, & \frac{s}{b}\exp\left\{ (s-1)\sum_{r=1}^{n} \widehat{M}^{r-n}\big( \widehat{M}^{n-1}(y )  \big) \right\} . \nonumber  \\
\intertext{Moreover, since $\widehat{M}^{-1}(y)\leq \frac{b}{s}y$,     }
\, \leq \, & \frac{s}{b}\exp\left\{ \frac{ s-1}{ 1-\frac{b}{s}   }\widehat{M}^{n-1}(y ) \right\} \,.\nonumber  \\
\intertext{Note that the above gives us (iv).   We can apply that $\widehat{M}^{n-1}\big( x (b/s)^{n} \big)\leq x(s/b)$, which holds    by the reasoning in the proof of  (i), to bound the above by  }
\, \leq \, & \frac{s}{b}\exp\left\{    x \frac{s}{b}  \frac{ s-1  }{ 1-\frac{b}{s}   } \right\} \,\leq \,   \frac{s}{b}\exp\left\{    \lambda\frac{s}{b}  \frac{ s-1  }{ 1-\frac{b}{s}   } \right\} \, .
\end{align}
The above gives us a bound independent of   $n$.

\end{proof}

\begin{proof}[Proof of Lemma~\ref{Initial}] Part (i):  Recall that $\textup{Var}\big( W_{n}(\beta_{n}; g)  \big) =M_{n}^{m}(0)$.  Pick $c>0$ to be large.   For $x< c(b/s)^{ n/2}   $, there is a $c'>0$ such that
\begin{align}\label{Godoe}
0\leq  M_{n}(x)-  \frac{s}{b}x -\BA^2\frac{s-1}{b}\big(\frac{b}{s}\big)^{n}   \, \leq \, c'\big(\frac{b}{s}\big)^{\frac{3n}{2}}\,+\,c'x\big(\frac{b}{s}\big)^{\frac{n}{2}} \,.
\end{align}
By using a telescoping sum, we can write
\begin{align*}
M_{n}^{m }(0)   \,-\,\BA^2\frac{s-1}{b}\frac{1-(\frac{b}{s})^{m}}{1-\frac{b}{s}}\big(\frac{b}{s}\big)^{n-m+1}  \,  =&\,\sum_{k=0}^{m-1} \big(\frac{b}{s}\big)^{m-k-1}\Big(  M_{n}^{k+1 }(0) \,- \, \frac{s}{b}  M_{n}^{k}(0)   \,-\,\BA^2\frac{s-1}{b}\big(\frac{b}{s}\big)^{n}\Big)\,.
\intertext{For any $m$ such that  $ M_{n}^{m-1}(0)< c(b/s)^{n/2} $, ~(\ref{Godoe}) implies that the above is bounded by    }
 \,  \leq &\,c'\sum_{k=0}^{m-1} \big(\frac{b}{s}\big)^{m-k-1}\Big(   \big(\frac{b}{s}\big)^{\frac{3n}{2}}\,+\, \big(\frac{b}{s}\big)^{\frac{n}{2}} M_{n}^{k}(0)\Big)
\\
 \,  \leq &\, \frac{c'}{1-\frac{b}{s}}  \big(\frac{b}{s}\big)^{\frac{3n}{2}}\,+\, \frac{c}{1-\frac{b}{s}} \big(\frac{b}{s}\big)^{\frac{n}{2}} M_{n}^{m}(0)\,,
\end{align*}
where in the last inequality we have used that $M_{n}^{k}(0)$ is increasing with $k$.   It follows that $M_{n}^{m }(0) $ is
 very close to $\BA^2\frac{s-1}{s-b}\big( 1-(b/s)^{m}\big)(b/s)^{n-m} $ for all $m\leq n/2$ and in particular (i) holds.

\vspace{.3cm}

\noindent Part (ii):   Define $\sigma_{k,n}^{(m)} :=\mathbb{E}\big[ |W_{n}(\beta_{n}; g)-1|^m\big]$.  The recursive relation~(\ref{WRecur}) and part (i) implies that there is a $C>0$ such that for $k<m$  $|\sigma_{k,3}^{(n)}|$ is bounded by a constant multiple of  $(b/s)^{2n}$.  Thus, by applying~(\ref{WRecur}) again, there is a $C>0$ such that $\sigma_{k,4}^{(n)}$ satisfies
\begin{align*}
\sigma_{k+1,4}^{(n)}\,\leq \,\frac{s}{b^3}\sigma_{k,4}^{(n)}  \,+    \,\frac{ 6s(s-1)  }{b^3}\big(\sigma_{k,2}^{(n)}\big)^2\,+\,C\big( \frac{b}{s} \big)^{2n}\,.
\end{align*}
By applying part (i) we get the inequality.

\end{proof}

\subsection{Proofs from Section~\ref{SecWeakDisorder}}\label{SecProofsTwo}

We remind the reader that the notations $M_{n}$ and  $\widehat{M}_{n}$ refer to different maps in Section~\ref{SecWeakDisorder} than in  Section~\ref{Secb<s}.

\begin{proof}[Proof of Lemma.~\ref{LemTan}]  Part (\I):  Define $r_{m}\in [0,\kappa_{b})$ as
\begin{align}\label{RecurR}
r_{m} \, := \,     \frac{ \sqrt{2b}   }{\widehat{\beta}(b-1)   }   \tan^{-1} \bigg( \frac{\sqrt{b}}{\widehat{\beta}\sqrt{2}}     \widehat{M}_{n}^{m}(x) \bigg)    \,.
\end{align}
Notice that the recursive form for  $\widehat{M}_{n}^{m}(x)$ implies
\begin{align*}
\tan\bigg( \BA\frac{b-1}{\sqrt{2b}}   r_{m+1}   \bigg)\,= &\,  \tan \bigg( \BA\frac{b-1}{\sqrt{2b}}   r_{m}  \bigg)\, +\, \frac{1}{n}\BA\frac{b-1}{\sqrt{2b}}\sec^2\bigg( \BA\frac{b-1}{\sqrt{2b}}   r_{m}  \bigg)\, .
\end{align*}
The above resembles a linear approximation  around $r_{m}$ with $r_{m+1}\approx r_{m}+\frac{1}{n}$.   Since the derivative of tangent is increasing over the interval $[0,\pi)$, we will always have $r_{m+1} < r_{m}+\frac{1}{n}$.  It follows that
\begin{align}\label{Tiz}
  \widehat{M}_{n}^{m}(x) \,  \leq  \, \frac{\BA\sqrt{2}}{\sqrt{b}}\tan\bigg(\tan^{-1}\bigg( \frac{\sqrt{b}}{\BA\sqrt{2}}x \bigg)+  \BA\frac{b-1}{\sqrt{2b}} \frac{m}{n} \bigg)   \, .
\end{align}
Since $\widehat{M}_{n}'(x)=1+\frac{b-1}{n}x$, by the chain rule we have
\begin{align}
 \frac{d}{dx}\Big[ \widehat{M}_{n}^{m}(x)  \Big] \, = \, \prod_{j=1}^{m} \widehat{M}_{n}'\Big( \widehat{M}_{n}^{j-1}(x)\Big) &= \prod_{j=1}^{m}  \bigg(1+ \frac{b-1}{n}\widehat{M}_{n}^{j-1}(x)\bigg) \nonumber \\ &\leq \exp \bigg \{ \frac{b-1}{n}\sum_{j=1}^{m}\widehat{M}_{n}^{j-1}(x)    \bigg\} \nonumber \\
 &\leq \exp \left \{  \frac{\BA \sqrt{2}(b-1)}{ \sqrt{b} }\int_{0}^{\frac{m}{n}}\tan\bigg(\tan^{-1}\bigg( \frac{\sqrt{b}}{\BA\sqrt{2}}x \bigg) +  \BA\frac{b-1}{\sqrt{2b}} r  \bigg) dr \right \}.\label{Salt}
\end{align}
The second inequality above holds by~(\ref{Tiz}) and a Riemann upper bound.  The right side of \eqref{Salt} is bounded uniformly for all $x\geq 0$ and $m,n\in \mathbb{N}$ with $\gamma(x,\frac{m}{n})<\pi /2-\epsilon $.

\vspace{.4cm}

\noindent Part (\I\I):   Define the function
$$\phi(x,y) \,   = \,  \frac{\BA\sqrt{2}}{\sqrt{b}}\tan\bigg(\tan^{-1}\bigg( \frac{\sqrt{b}}{\BA\sqrt{2}}x \bigg)+  \BA\frac{b-1}{\sqrt{2b}} y \bigg)   \,   .  $$
The difference between $\widehat{M}_{n}^{m}(x)$  and   $  \phi\big(x,\frac{m}{n}\big)$ can be bounded through a telescoping sum:
\begin{align}
\Big|   \phi\Big(x,\frac{m}{n}\Big)    \, - \,   \widehat{M}_{n}^{m}(x)    \Big| \, \leq \,& \sum_{k=0}^{m-1}\bigg|  \widehat{M}_{n}^{m-k-1}\bigg(  \phi\Big(x,\frac{k+1}{n}\Big)   \bigg) \, -\,     \widehat{M}_{n}^{m-k}\bigg(  \phi\Big(x,\frac{k}{n}\Big)   \bigg) \bigg|  \nonumber \\  \leq \,& \Bigg[\sup_{0\leq \ell \leq  m} \frac{d}{da} \widehat{M}_{n}^{m-\ell-1}\bigg(  \phi\Big(x,\frac{\ell}{n}\Big)    \bigg)         \Bigg] \sum_{k=0}^{m-1}\bigg|  \phi\Big(x,\frac{k+1}{n}\Big)  \, -\,   \widehat{M}_{n}\bigg(  \phi\Big(x,\frac{k}{n}\Big)   \bigg)   \bigg|
\nonumber  \\  \leq \,&C\sum_{k=0}^{m-1}\Bigg|  \int_{\frac{k}{n}}^{\frac{k+1}{n}}dr\Big(r-\frac{k}{n}   \Big)  \frac{\partial^2}{\partial r^2}\phi(x,r)  \Bigg| \nonumber
 \\  \leq \,&\frac{C}{n}\int_{0}^{\frac{m}{n}}dr \frac{\partial^2}{\partial r^2}\phi(x,r) \,=\,\frac{C}{n} \frac{\partial}{\partial r}\phi(x,r)\Big|_{r=\frac{m}{n}} \, \leq \, \frac{C}{n} \frac{\partial}{\partial y}\phi(x,y)\,\leq \, \frac{\widehat{C}}{n}\, .\label{Morv}
\end{align}
In the third inequality above, we have used part (\I) to bound the supremum since
$$\text{}\hspace{2.5cm}\gamma\bigg( \phi\Big(x,\frac{\ell}{n}\Big) ,\,\frac{m-\ell -1}{n}     \bigg) \,=\,\gamma\Big(x,\frac{m-1}{n}\Big) \, <\, \frac{\pi}{2}-\epsilon\,,  \hspace{1.7cm}0\leq \ell \leq m \, . $$
  The third inequality also  uses the observation that
$$\widehat{M}_{n}\bigg(  \phi\Big(x,\frac{k}{n}\Big)   \bigg)\, = \, \phi\Big(x,\frac{k}{n}\Big) \, +\,  \frac{1}{n} \frac{d }{dr}\phi(x,r) \Big|_{r=\frac{k}{n}} $$
and  a second-order Taylor expansion to write
\begin{align*}
  \phi\Big(x,\frac{k+1}{n}\Big)    \, = \,\widehat{M}_{n}\bigg(  \phi\Big(x,\frac{k}{n}\Big)   \bigg) \, +\,   \int_{\frac{k}{n}}^{\frac{k+1}{n}}dr\Big(r-\frac{k}{n}   \Big)  \frac{\partial^2}{\partial r^2}\phi(x,r)  \, .
\end{align*}
The final inequality in~(\ref{Morv}) holds for some $\widehat{C}$ by another application of part (\I).

\vspace{.4cm}

\noindent Part (\I\I\I):   We begin with the observation that for any bounded interval $z\in [0,a]$ there is a $c>0 $ such that for all $n\geq 1$
\begin{align}\label{Twiz}
\Big|  M_{n}(z)\,-\, \widehat{M}_{n}(z)   \Big| \, \leq \,  \frac{c}{n^2}  \, .
\end{align}
We can write the difference between $M_{n}^{m}(x)$ and  $\widehat{M}_{n}^{m}(x)$  using the following telescoping sum:
\begin{align}\label{Shiznik}
  M_{n}^{m}(x)\,-\, \widehat{M}_{n}^{m}(x) \, = \, & \sum_{k=1}^{m}\Big[ \widehat{M}_{n}^{m-k}\big(M_{n}^{k}(x)\big)\,-\, \widehat{M}_{n}^{m-k+1}\big(M_{n}^{k-1}(x)\big) \Big]\nonumber \\   \leq  \, & \sum_{k=1}^{m}\bigg[\sup_{0\leq z \leq M_{n}^{k}(x)} \frac{d}{dz}\Big[ \widehat{M}_{n}^{m-k}(z)\Big] \bigg] \Big[ M_{n}\big(M_{n}^{k-1}(x)\big)\,-\, \widehat{M}_{n}\big(M_{n}^{k-1}(x)\big) \Big]\, .
\end{align}
Let $N\in \mathbb{N}$ be   the smallest number such that
\begin{align}\label{CNSRT}
   \max_{0\leq \ell \leq   N} \gamma\Big(  M_{n}^{\ell}(x),\, \frac{N-\ell  }{ n  }  \Big)  \, \geq  \,  \frac{\pi}{2}-\frac{\epsilon}{2} \, .
\end{align}
The above implies that  $M_{n}^{k}(x)$ is bounded by some $c>0$ for all $k<N$ and $n>0$.  Combining (\ref{Shiznik}) with the remark~(\ref{Twiz}) yields that  there is a $C>0$ such that for $\ell< N$
\begin{align*}
  M_{n}^{\ell}(x)\,-\, \widehat{M}_{n}^{\ell}(x) \, \leq \, & \frac{\mathbf{c}}{n^2}\sum_{k=1}^{\ell}\bigg[\sup_{0\leq z \leq M_{n}^{k}(x)} \frac{d}{dz}\Big[ \widehat{M}_{n}^{\ell-k}(z)\Big] \bigg] \, \leq \,\frac{\mathbf{c}'\ell}{n^2} \,,
\end{align*}
where the second inequality holds by part (\I\I).  Plugging the above back into~(\ref{CNSRT}) with $N$ replaced by $m$ yields
\begin{align*}
  \max_{0\leq \ell \leq    m} \gamma\Big(  M_{n}^{\ell}(x),\, \frac{m-\ell  }{ n  }  \Big) \, \leq \, & \max_{0\leq \ell \leq   m} \gamma\Big(  \widehat{M}_{n}^{\ell}(x)\,+\frac{\mathbf{c}'}{n} ,\, \frac{m-\ell  }{ n  }  \Big)\\   \, \leq \,& \max_{0\leq \ell \leq    m} \gamma\Big(  \widehat{M}_{n}^{\ell}(x) ,\, \frac{m-\ell  }{ n  }  \Big) \,+\,\frac{\mathbf{c}'\sqrt{b}  }{\BA \sqrt{2} }\frac{\ell}{n^2} \, .
\intertext{By our previous observation that $\widehat{M}_{n}^{\ell}(x)\leq \phi \big(x,\frac{\ell}{n}\big)$, the above is smaller than}
  \, \leq \,&  \max_{0\leq \ell \leq   m} \gamma\bigg(\phi \Big(x,\frac{\ell}{n}\Big), \frac{m-\ell}{n}  \bigg)   \,+\,\frac{\mathbf{c}'\sqrt{b}  }{\BA \sqrt{2} }\frac{m}{n^2} \,
\\  \, \leq \,& \frac{\pi}{2}-\epsilon  \,+\,\frac{\mathbf{c}'\sqrt{b}  }{\BA \sqrt{2} }\frac{1}{n}\,<\,\frac{\pi}{2}-\frac{\epsilon}{2}\, ,
\end{align*}
where we have used the identity $\gamma\big(\phi \big(x,\frac{k}{n}\big), \frac{m-k}{n}  \big)= \gamma\big(x,\frac{m}{n}\big)$ and our assumption that $ \gamma\big(x,\frac{m}{n}\big)\leq \pi/2-\epsilon $.  The last inequality holds for large enough $n$.  It follows that $m<N$ and
\begin{align*}
  M_{n}^{\ell}(x)\,-\, \widehat{M}_{n}^{\ell}(x) \, \leq \, \frac{\mathbf{c}'m}{n^2}\, =\,\mathit{O}\Big(\frac{1}{n}\Big)\, .
\end{align*}

\end{proof}

\begin{proof}[Proof of Lemma~\ref{LemTanII}] For notational convenience, we will equate $\widetilde{M}^{r}_{n}$ for $r\in \R^{+}$ with $\widetilde{M}^{\lceil r\rceil }_{n}$ in this proof.  \vspace{.2cm}

\noindent Part (\I):  From the proof of Lem~\ref{LemRT} we know that for $n\gg 1$
$$  \widetilde{M}^{n }_{n}(0) \, \approx \,  \frac{6}{b+1}\frac{n}{\log n} \, <  \,  \frac{12}{b+1}\frac{n}{\log n} \, , $$
where the inequality holds for $n$ large enough.  Consider the sequence of numbers $x_{m}^{(n)}:=\widetilde{M}^{m }_{n}\big(\widetilde{M}^{n }_{n}(0) \big) $ and $c_{m}^{(n)}:=x_{m}^{(n)}\frac{\log n}{n}$.
Notice that $c_{m+1}^{(n)} \frac{n}{\log n } = x_{m+1}^{(n)} =  \widetilde{M}_{n}\big(x_{m}^{(n)}\big) $, and so
\begin{align}\label{Dandelion}
c_{m+1}^{(n)} \frac{n}{\log n}\,  = &\, c_{m}^{(n)}\frac{n}{\log n }\Bigg(  1 \, +\, \frac{(b-1)\big(c_{m}^{(n)}\big)^2}{2\log n }\, +\, \frac{(b-1)(b-2)\big(c_{m}^{(n)}\big)^4}{6\log^2 n }\Bigg)\, +\,\frac{  b-1 }{ bn }\, \nonumber \\  < &\, c_{m}^{(n)} \frac{n}{\log n}\Bigg(  1 \, +\, \frac{(b-1)\big(c_{m}^{(n)}\big)^2}{\log n } \, +\, \frac{(b-1)(b-2)\big(c_{m}^{(n)}\big)^4}{6\log^2 n }\Bigg)
\nonumber\,\\  < &\, c_{m}^{(n)} \frac{n}{\log n}\exp \left\{\frac{(b-1)\big(c_{m}^{(n)}\big)^2}{\log n }\, +\, \frac{(b-1)(b-2)\big(c_{m}^{(n)}\big)^4}{6\log^2 n } \right\}\,.
\end{align}
For all $m\in \mathbb{N}$ such that $c_{m}^{(n)}<U$ for some constant $U$, then $\ds c_{m}^{(n)} \, \leq  \,  \frac{12}{b+1} \exp\Big\{ m\frac{(b-1) U^2 }{\log n }\,+\, \frac{(b-1)(b-2)U^4}{6\log^2  n }    \Big\}$.
Thus for $\epsilon>0$ small enough so that $U>  \frac{6}{b+1}\exp\big\{\epsilon (b-1)U^2\}$, then $x_{m}^{(n)}=\widehat{M}^{n+m }_{n}(0)  $ will be bounded by a constant multiple $\frac{n}{\log n}$ for all $m <\epsilon \log n$.

Getting a lower bound is easier since we can use the first line of~(\ref{Dandelion}) to conclude that
$$  c_{m}^{(n)} \, >\,      \bigg(  1 \, +\, \frac{(b-1)(c_{0}^{(n)})^2}{2\log n }\bigg)^m  \,  \approx  \, \exp\left\{\epsilon\frac{b-1}{2}(c_{0}^{(n)})^2  \right\}  \, ,    $$
where the the approximation is for $m=\epsilon\log n $ and $n$ large.

\vspace{.5cm}

\noindent Part (\I\I):
 Define $r_{m}\in [0,1)$ as
\begin{align}\label{RecurR}
r_{m} \, := \,     \frac{ 2  }{\pi  }   \tan^{-1} \bigg( \frac{b-1}{\pi }     \widetilde{M}_{n}^{m}(x) \bigg)    \,.
\end{align}

  The chain rule gives the first equality below:
\begin{align}
 \frac{d}{dx}\widetilde{M}_{n}^{\ell_{n}-m}(x)  \Big|_{x =\widetilde{M}_{n}^{m}(0)   } \, = &\, \prod_{j=1}^{\ell_{n}-m} \widetilde{M}_{n}'\Big( \widetilde{M}_{n}^{m+j-1}(0)\Big)\, .\nonumber
\intertext{Since $\widetilde{M}_{n}'(x)=1+\frac{b-1}{n}x+\frac{(b-1)(b-2)}{2n^2}x^2$, the above is equal to    }
 \, = & \, \prod_{j=1}^{\ell_{n}-m}  \bigg(1+ \frac{b-1}{n}\widetilde{M}_{n}^{m+j-1}(0)+\frac{(b-1)(b-2)}{2n^2}\Big(\widetilde{M}_{n}^{m+j-1}(0)\Big)^2   \bigg) \nonumber \\   \leq  &\, \exp\left\{ \frac{b-1}{n}\Big(   1+\frac{c}{\log n}  \Big)\sum_{j=1}^{\ell_{n}-m}\widetilde{M}_{n}^{m+j-1}(0)    \right\}\,, \nonumber
\intertext{where we have applied part (i) to bound $\widetilde{M}_{n}^{m+j-1}(0)$ by $\frac{cn}{\log n}$ in the quadratic term.  By definition of the values $r_{j}\in [0,1)$, we have the equality    }
   = & \, \exp \left\{ \frac{\pi}{n}\Big(   1+\frac{c}{\log n}  \Big)  \sum_{j=m}^{\ell_{n}-1}\tan\big(\frac{\pi}{2}r_{j}   \big) \right\}\,.
\intertext{The difference between $\frac{1}{n}\sum_{j=m}^{\ell_{n}-1}\tan\big(\frac{\pi}{2}r_{j}   \big)$ and  $\int_{r_{m}}^{r_{\ell_{n}}}\tan (\frac{\pi}{2} s  ) ds$ is uniformly bounded, so we have $C>0$ such that   }
  \leq & \,C\exp\left\{  \pi  \Big(   1+\frac{c}{\log n}  \Big)  \int_{r_{m}}^{r_{\ell_{n}}}\tan \big(\frac{\pi}{2} s  \big) ds \right\}\,\nonumber \\ \leq &\,C\exp\left\{   \pi  \Big(   1+\frac{c}{\log n}  \Big) \Big( \log\big( \cos\big(\frac{\pi}{2}r_{m}\big) \big) - \log\big( \cos\big(\frac{\pi}{2}r_{\ell_{n}}\big)  \big)   \Big)  \right\}\nonumber \\   \leq &\,C'\frac{n^2}{\log^2 n }\frac{1}{1+\big(\frac{\pi}{b-1} \widetilde{M}_{n}^m(0)  \big)^2 }
\, .\label{Jakdah}
\end{align}
The last inequality applies the relations
$$ \frac{1}{\cos^{2}(\frac{\pi}{2}r_{\ell_{n}})}  \, = \, 1+\tan^2\big(\frac{\pi}{2}r_{\ell_{n}}\big) \, = \,  1+\frac{\pi^2}{(b-1)^2}\big(\widetilde{M}_{n}^{\ell_{n}}(0)   \big)^2        \, = \,  \mathit{O}\Big(\frac{n^2}{\log^2 n }\Big)\, ,  $$
where the order equality is by part (\I).

\vspace{.5cm}

\noindent Part (\I\I\I): There is a $C>0$ such that for  $0<x<n$
\begin{align*}
M_{n}(x)\,-\,\widetilde{M}_{n}(x) \, \leq \, \frac{ Cx^2}{n^2} \Big(  \widetilde{M}_{n}(x)\,-\,x\Big)\,+\,\frac{C}{n^2}\,   .
\end{align*}
 By using a telescoping sum,  as in Part (\I\I\I) of Lemma~\ref{LemTan}, we can bound the difference between $M_{n}^{m}(0)$ and  $\widehat{M}_{n}^{m}(0)$  through
\begin{align}\label{ShiznikII}
  M_{n}^{m}(0)\,-\,& \widetilde{M}_{n}^{m}(0)\nonumber  \\   \leq  \, &  \sum_{k=1}^{m}\Bigg[ \frac{d}{dz}\Big[ \widetilde{M}_{n}^{m-k}(z)\Big]\Big|_{z=  M_{n}^{k}(x)  } \Bigg] \Big[ M_{n}\big(M_{n}^{k-1}(0)\big)\,-\, \widetilde{M}_{n}\big(M_{n}^{k-1}(0)\big) \Big]\,
\nonumber \\   \leq  \, &   \sum_{k=1}^{m}\Bigg[ \frac{d}{dz}\Big[ \widetilde{M}_{n}^{m-k}(z)\Big]\Big|_{z=  M_{n}^{k}(x)  }\Bigg]\Bigg[ \frac{C\big(M_{n}^{k-1}(0)\big)^2}{n^2}  \bigg(  \widetilde{M}_{n}\big(M_{n}^{k-1}(0)\big)-  M_{n}^{k-1}(0)\bigg)\,+\,\frac{C}{n^2}\Bigg]\,.
\end{align}
Let $\epsilon>0$ be small and let $\alpha_{n} \in \mathbb{N}$ be the minimum of $n+1$ and the first  $m\in \mathbb{N}$ such that
\begin{align}\label{MB}
    M_{n}^{m}(0) \, >\, \widetilde{M}_{n}^{m+\epsilon \log n }(0) \, .
\end{align}

For $k\leq m < \alpha_{n}$, part (\I\I) implies the third inequality below
\begin{align}\label{Libidu}
 \frac{d}{dz}\Big[ \widetilde{M}_{n}^{m-k}(z)\Big]\Big|_{z=  M_{n}^{k}(0)  } \, \leq \, &\frac{d}{dz}\Big[ \widetilde{M}_{n}^{m-k}(z)\Big] \Big|_{z=  \widetilde{M}_{n}^{k+\epsilon\log n }(0)  } \, \leq \, \frac{d}{dz}\Big[ \widetilde{M}_{n}^{n-k}(z)\Big] \Big|_{z=  \widetilde{M}_{n}^{k+\epsilon\log n }(0)  }  \nonumber   \\ \, \leq  &\,C'\frac{n^2}{\log^2 n }\frac{1}{1+\Big(\frac{\pi}{b-1} \widetilde{M}_{n}^{k+\epsilon\log n -1}(0)  \Big)^2 }   \, .
\end{align}
The first inequality above uses that  $M_{n}^{m}(0) \leq \widetilde{M}_{n}^{m+\epsilon \log n }(0)$   and the fact that the derivative of $\widetilde{M}_{n}$ is an increasing function,  and second inequality uses that $ \widetilde{M}_{n}(x)>x$.

Moreover, for $k<\alpha_{n}$, we have the first inequality below:
\begin{align}\label{Cornwallis}
\big(M_{n}^{k-1}(0)\big)^2\Big( \widetilde{M}_{n}\big(M_{n}^{k-1}(0)\big)\,- \, & M_{n}^{k-1}(0)\Big) \nonumber  \\  \, \leq  \, & \big(\widetilde{M}_{n}^{k+\epsilon\log n-1}(0)\big)^2\Big( \widetilde{M}_{n}^{k+\epsilon\log n }(0)\,-\, \widetilde{M}_{n}^{k+\epsilon\log n -1}(0)\Big)\,.
\end{align}

Combining~(\ref{ShiznikII}),~(\ref{Libidu}), and~(\ref{Cornwallis}) for  $m<\alpha_{n}$ we have that
\begin{align*}
  M_{n}^{m}(0)\,-\, \widetilde{M}_{n}^{m}(0)  \, \leq \, &  \frac{ CC'}{\log^2 n } \sum_{k=1}^{m} \frac{ \big(\widetilde{M}_{n}^{k+\epsilon\log n-1}(0)\big)^2\Big( \widetilde{M}_{n}^{k+\epsilon\log n }(0)\,-\, \widetilde{M}_{n}^{k+\epsilon\log n -1}(0)\Big)\,+\,c }{ 1+\big(\frac{\pi}{b-1}\widetilde{M}_{n}^{k+\epsilon\log n -1}(0) \big)^2      }     \\  \nonumber
 \, \leq \, &  \frac{ CC' }{\log^2 n } \sum_{k=1}^{m+\epsilon\log n} \bigg[ \frac{b-1}{\pi}\big(\widetilde{M}_{n}^{k }(0)- \widetilde{M}_{n}^{k-1}(0)\big)\,+\,  c \bigg]\nonumber \\
 \, = \, &  \frac{ CC'  }{\log^2 n } \bigg[\frac{b-1}{\pi}\widetilde{M}_{n}^{m+\epsilon\log n }(0)\,+\,  c\big( m+\epsilon\log n  \big) \bigg] \, \leq \,   \frac{ C'' n}{\log^2 n } \,,
\end{align*}
where the final inequality holds for some $C''>0$ by part (i) and the constraint $m<\alpha_{n}\leq n+1$.

Hence, we have shown that for $m < \alpha_{n}$
 $$  M_{n}^{m}(0)\,-\, \widetilde{M}_{n}^{m}(0) \, \leq \,   \frac{ C''' n}{\log^2 n }\, . $$
However, by part (\I), there is a $c>0$ such that for large $n$
$$   \widetilde{M}_{n}^{n+\epsilon\log n }(0)  \, >\,  c\frac{n}{\log n}\, .  $$
It follows that $\alpha_{n}=n+1$ for large enough $n$.   Thus we have shown that
 $$ \frac{\log n}{n} \Big(M_{n}^{n}(0)\,-\, \widetilde{M}_{n}^{n}(0)\Big) \, = \, \mathit{O}\Big(  \frac{ 1}{\log (n) }\Big)\, . $$

\end{proof}

\begin{proof}[Proof of Lemma~\ref{LemRT}] By the remark~(\ref{Zag}), $\widetilde{W}_{m}\big(  \kappa_{b}/n;D_{n}\big)=1+\widetilde{R}_{n}\big( \kappa_{b}/n; D_{m}\big)$ is related to the maps $\widetilde{M}_{n}:[0,\infty)\rightarrow [0,\infty)$ as follows:
$$     \textup{Var}\bigg[ \widetilde{W}_{n}\Big( \frac{ \kappa_{b}}{n};D_{n}\Big) \bigg] \, =\,\mathbb{E}\bigg[ \Big| \widetilde{R}_{n}\Big( \frac{ \kappa_{b}}{n};D_{n}\Big)\Big|^2 \bigg] \,=:\, \widetilde{\varrho}_{n}^{(n)}(\BA)\,=\,\widetilde{M}_{n}^{n}(0) \,.    $$
Define the sequence of numbers $r_{m}^{(n)}\in [0,1)$ such that
$$r_{m}^{(n)} \, := \,    \frac{2}{\pi} \tan^{-1} \bigg(\frac{b-1}{\pi}    \widetilde{M}_{n}^{m}(0) \bigg)    \,. $$
It is enough for us to prove  that
$$ 1\,-\,r_{n}^{(n)}\, = \, \frac{b+1}{3(b-1)}   \frac{\log n}{n}\, +\,\mathit{O}\Big( \frac{1}{n}   \Big)  \,  $$
since, in that case,
$$   \widetilde{M}_{n}^{n}(0)\,=\,\frac{\pi}{b-1}\tan\big( \frac{\pi}{2} r_{n}^{(n)}  \big)\,\approx \, \frac{\pi}{b-1}\frac{1}{\frac{\pi}{2}- \frac{\pi}{2} r_{n}^{(n)} } \, =\, \frac{  6   }{ b+1   }\frac{n}{\log n} \, +\,\mathit{O}\Big( \frac{n}{\log^2 n}   \Big) \,.  $$

In the analysis below, we will abuse notation by writing $r_{m}$ in place of $r_{m}^{(n)}$.
As a consequence of~(\ref{Zag}), the $r_{m}$'s obey the recursive equation
\begin{align}\label{Selge}
   r_{m+1}  \,= &\, \frac{2}{\pi}\tan^{-1}\bigg( \tan \big( \frac{\pi}{2}    r_{m}  \big)\, +\, \frac{\pi}{2n}\sec^2\big( \frac{\pi}{2}    r_{m}  \big)\,+\,\frac{\pi^2 (b-2)  }{ 6(b-1)n^2  }  \tan^3 \big( \frac{\pi}{2}    r_{m}  \big)     \bigg)\,.
\end{align}
Moreover, a second-order application of Taylor's theorem   to the function $f(x)=\tan(\frac{\pi}{2}x)$   at the point $x=r_{m}$  guarantees that there is a value $r_{m}^*$ in the interval $(r_{m},r_{m}+\frac{1}{n})$ such that
\begin{align}\label{Hormander}
& r_{m}+\frac{1}{n}\, = \,\frac{2}{\pi}\tan^{-1}\bigg( \tan \big( \frac{\pi}{2}    r_{m} \big)\, +\, \frac{\pi}{2n}\sec^2\big( \frac{\pi}{2}    r_{m}  \big)\,+\, \frac{\pi^2  }{4n^2}\tan\big( \frac{\pi}{2}    r_{m}^*  \big) \sec^2\big( \frac{\pi}{2}    r_{m}^*  \big)     \bigg)\,.
\end{align}
Define $\Delta_{m}\equiv\Delta_{m}^{(n)}$ as
\begin{align*}
\Delta_{m}\,:=\,  &  \frac{\pi^2  }{4n^2}\tan\big( \frac{\pi}{2}    r_{m}^*  \big) \sec^2\big( \frac{\pi}{2}    r_{m}^*  \big)    \,-\, \frac{\pi^2 (b-2)  }{ 6(b-1)n^2  }  \tan^3 \big( \frac{\pi}{2}    r_{m}  \big) \,.
\intertext{The above can be written as follows:     }
\, = \,  &  \frac{2  }{\pi n^2}\frac{ b+1   }{3(b-1)   }  \frac{1}{(1-r_{m})^3} \,+\,\mathit{O}\Big( \frac{1}{n^3(1 -r_{m})^4  }  \Big)\,.
\end{align*}

By  a second-order application of Taylor's theorem to the function $g(x)=\frac{2}{\pi} \tan^{-1} ( x )   $ in~(\ref{Hormander}) at the point $x=\tan(\frac{\pi}{2}r_{m+1})$, there is an
 $r_{m}^{**}\in (r_{m+1}, r_{m}+\frac{1}{n})$   such that
\begin{align}\label{Horbit}
r_{m}+\frac{1}{n}\, = \,&r_{m+1}\, +\,\frac{2}{\pi} \Delta_{m} \frac{1     }{1+\tan^2\big( \frac{\pi}{2}r_{m+1}\big)    } \,  -\,\frac{2}{\pi}\Delta_{m}^2   \frac{   \tan\big( \frac{\pi}{2}r_{m}^{**}\big)    }{\big(1+\tan^2\big( \frac{\pi}{2}r_{m}^{**}\big) \big)^2   }\, .
\end{align}

To bound the difference between $1$ and $r_{n}$, we can apply~(\ref{Horbit}) as follows:
\begin{align*}
1\,-\,r_{n}\, & = \,\sum_{m=0}^{n-1}\Big(r_{m}+\frac{1}{n}-r_{m+1}   \Big)\,\\ & =\,\frac{2}{\pi }\sum_{m=0}^{n-1}\Delta_{m}  \cos^2\big( \frac{\pi}{2}r_{m+1}\big)  \, - \,\frac{2 }{\pi}\sum_{m=0}^{n-1}\Delta_{m}^2 \sin\big( \frac{\pi}{2}r_{m}^{**}\big)   \cos^3\big( \frac{\pi}{2}r_{m}^{**}\big)
\\ & =\,\frac{1}{n^2}\frac{ b+1   }{ 3(b-1)    }\sum_{m=0}^{n-1}\frac{1}{1-r_{m}} \, + \,\mathit{O}\bigg(\frac{1}{n^4}\sum_{m=0}^{n-1}\frac{1}{(1-r_{m})^3}  \bigg)\, .
\intertext{The first term above is a Riemann sum and the second term is smaller by a factor of $\frac{1}{n}$.    }
 & =\,\frac{1 }{n}\frac{ b+1   }{ 3(b-1)    }\bigg(1+\mathit{O}\Big(\frac{n^{-1}}{1-r_{n}}\Big)\bigg)     \int_{0}^{r_{n}}  \frac{1   }{1-x }dx \,
+\, \mathit{O}\bigg(\frac{1}{n^3}\int_{0}^{r_{n}}\frac{1}{(1-x)^3}dx  \bigg)   \\ &  =\, \frac{1}{n}\frac{ b+1   }{ 3(b-1)    }\bigg(1+\mathit{O}\Big(\frac{n^{-1}}{1-r_{n}}\Big)\bigg)\ln\Big(\frac{1}{1-r_{n}}\Big) \,
\end{align*}
The above implies that $1-r_{n} = \frac{ b+1   }{ 3(b-1)    }\frac{\log n}{n}+\mathit{O}\big( \frac{1}{n}  \big)   $, which completes the proof.

\end{proof}

\end{document}